\newtheorem{thm}{Theorem}[section]
\newtheorem*{thm*}{Theorem}
\newtheorem{theorem}{Theorem}[section]
\newtheorem{lemma}[thm]{Lemma}
\newtheorem{proposition}[thm]{Proposition}
\newtheorem{definition}[thm]{Definition}
\theoremstyle{remark}
\newtheorem{remark}{Remark}[section]
\newtheorem{example}[remark]{Example}
\newcommand{\id}{\mathrm{id}}
\newcommand{\N}{\mathbf{N}}
\newcommand{\R}{\mathbf{R}}
\newcommand{\C}{\mathbf{C}}
\newcommand{\Z}{\mathbf{Z}}
\newcommand{\eps}{\varepsilon}
\renewcommand{\hat}[1]{\widehat{#1}}
\newcommand{\HK}{\operatorname{HK}}
\newcommand{\HP}{\operatorname{HP}}
\newcommand{\unf}{{\operatorname{unf}}}
\newcommand{\nil}{{\operatorname{nil}}}
\newcommand{\sml}{{\operatorname{sml}}}
\newcommand{\E}{\mathbb{E}}
\begin{document}

\title{Large values of the Gowers-Host-Kra seminorms}
\author{Tanja Eisner \and Terence Tao}
\thanks{T.E. is supported by the European Social Fund
and by the Ministry of Science, Research and the Arts
Baden-W\"urttemberg. T.T. is supported by NSF grant DMS-0649473
and a grant from the Macarthur Foundation.}
\address{Universit\"at  T\"ubingen \and University of California, Los Angeles}
\date{}

\begin{abstract}   The \emph{Gowers uniformity norms} $\|f\|_{U^k(G)}$ of a function $f: G \to \C$ on a finite additive group $G$, together with the slight variant $\|f\|_{U^k([N])}$ defined for functions on a discrete interval $[N] := \{1,\ldots,N\}$, are of importance in the modern theory of counting additive patterns (such as arithmetic progressions) inside large sets.  Closely related to these norms are the \emph{Gowers-Host-Kra seminorms} $\|f\|_{U^k(X)}$ of a measurable function $f: X \to \C$ on a measure-preserving system $X = (X, {\mathcal X}, \mu, T)$.  Much recent effort has been devoted to the question of obtaining necessary and sufficient conditions for these Gowers norms to have non-trivial size (e.g. at least $\eta$ for some small $\eta > 0$), leading in particular to the inverse conjecture for the Gowers norms, and to the Host-Kra classification of characteristic factors for the Gowers-Host-Kra seminorms.

In this paper we investigate the near-extremal (or ``property
testing'') version of this question, when the Gowers norm or
Gowers-Host-Kra seminorm of a function is almost as large as it can be
subject to an $L^\infty$ or $L^p$ bound on its magnitude.  Our main
results assert, roughly speaking, that this occurs if and only if $f$
behaves like a polynomial phase, possibly localised to a subgroup of
the domain; these results can be viewed as higher-order analogues of a
classical result of Russo~\cite{russo} and Fournier~\cite{fournier}, and are also related to the polynomiality testing results over finite fields of Blum-Luby-Rubinfeld~\cite{blr} and Alon-Kaufman-Krivelevich-Litsyn-Ron~\cite{akklr}.  We investigate the situation further for the $U^3$ norms, which are associated to $2$-step nilsequences, and find that there is a threshold behaviour, in that non-trivial $2$-step nilsequences (not associated with linear or quadratic phases) only emerge once the $U^3$ norm is at most $2^{-1/8}$ of the $L^\infty$ norm.
\end{abstract}

\maketitle


\section{Introduction}

\subsection{The Gowers norms and Gowers-Host-Kra seminorms}

The purpose of this paper is to investigate functions which have an exceptionally large Gowers norm $U^k(G)$ or Gowers-Host-Kra seminorm $U^k(X)$.  To do this we first recall the definitions for these norms.

We begin with the Gowers norms $U^k(G)$.  These norms are usually defined on a finite abelian group $G = (G,+)$, such as a cyclic group $\Z/N\Z$ or a finite field vector space ${\mathbb F}_p^n$, but our results are most naturally stated in the more general setting\footnote{The Gowers uniformity norms can also be defined on some non-abelian groups, and in particular in finite groups or nilpotent Lie groups; this is implicit in \cite{host-kra}.  However, we will not consider such generalisations here.  We also note that a slightly different Gowers-type seminorm was constructed on $\ell^\infty(\Z)$ by Host and Kra \cite{host-kra-inf}, which we will also not study directly here.} of a locally compact abelian (LCA) group $G = (G,+)$ equipped with a non-trivial Haar measure, $\mu$, i.e. non-trivial translation-invariant Radon measure on the Borel $\sigma$-algebra ${\mathcal B}$ of $G$.  In the case of a compact abelian group, we will require that the Haar measure is \emph{normalised}, so that $\mu(G)=1$, but of course we cannot require this in the non-compact case.  In particular, if $G$ is a finite abelian group, then the Haar measure is the normalised counting measure $\mu(E) := |E|/|G|$.

For technical reasons it is convenient to assume that the group $G$ is second countable; this condition may almost certainly be removed, but we will not do so here as one has the second countability axiom in all known applications of the Gowers norms.  Thus, in this paper, all locally compact abelian groups are implicitly understood to be second countable.  In particular, their Borel $\sigma$-algebra ${\mathcal B}$ will be countably generated, and $G$ will be metrisable (by the Urysohn metrisation theorem).

As $(G,{\mathcal B},\mu)$ is a measure space, it comes with the usual $L^p$ spaces $L^p(G) = L^p(\mu) = L^p(G,{\mathcal B},\mu)$ for $0 < p \leq \infty$.  We also define $L^\infty_c(G)$ to be the subspace of $L^\infty(G)$ consisting of functions that are compactly supported; this is a translation-invariant algebra, with the shift action\footnote{In many texts, the shift $T^h f$ is defined as $T^h f(x) = f(x+h)$ instead of $T^h f(x) = f(x-h)$.  The two conventions lead to an equivalent definition of the Gowers norms; this convention is slightly more compatible with the ergodic theory conventions, and lead to the pleasant identity $T^h 1_E = 1_{T^h E} := 1_{E+h}$ for indicator functions $1_E$.}  $T^h: L^\infty_c(G) \to L^\infty_c(G)$ defined for $h \in G$ by the formula $T^h f(x) := f(x-h)$.

\begin{definition}[Gowers norms on a LCA group]\cite{gowers-4aps,gowers-longaps}  Let $G=(G,+,{\mathcal B},\mu)$ be a locally compact abelian group with a non-trivial Haar measure $\mu$, and let $f \in L^\infty_c(G)$ be a function.  We define the \emph{Gowers uniformity norms} $\|f\|_{U^k(G)} = \|f\|_{U^k(\mu)} = \|f\|_{U^k(G,{\mathcal B},\mu)}$ recursively for $k=1,2,\ldots$ by the formula
\begin{equation}\label{f1}
 \|f\|_{U^1(G)} := \left|\int_G f\ d\mu\right|
\end{equation}
and
\begin{equation}\label{fk}
\|f\|_{U^{k+1}(G)} := \left(\int_G \|(T^h f) \overline{f} \|_{U^k(G)}^{2^k}\ d\mu(h)\right)^{1/2^{k+1}},
\end{equation}
for $k \geq 1$.
\end{definition}

It is possible to show that the Gowers uniformity norms are seminorms for any $k \geq 1$; see e.g. \cite{gowers-longaps}, \cite{green-tao-longaps}, or \cite{tao-vu}.  (In these references, the seminorm property is only established when the group $G$ is finite or cyclic, but the proof easily extends to the general LCA case.)  When $k=2$, we see from \eqref{f1}, \eqref{fk} that we have
$$ \|f\|_{U^2(G)} = \| \tilde f * f \|_{L^2(G)}^{1/2}$$
where $\tilde f(x) := \overline{f(-x)}$ and $f*g(x) := \int_G f(y) g(x-y)\ d\mu(y)$.  From Plancherel's theorem we thus have
\begin{equation}\label{fug2}
\|f\|_{U^2(G)} = \| \hat f \|_{L^4(\hat G)}
\end{equation}
where $\hat G = (\hat G, \hat{\mathcal B}, \hat \mu)$ is the Pontryagin dual of $G$, and $\hat f$ is the Fourier transform.  Because of this, the $U^2(G)$ seminorm is in fact a norm on $G$, and by induction we see that the $U^k(G)$ seminorms are also norms for all $k \geq 2$.

The norm can also be written more explicitly as
\begin{equation}\label{fug}
 \|f\|_{U^k(G)} := \left( \int_{G^{k+1}} \prod_{\omega \in \{0,1\}^k} {\mathcal C}^{|\omega|} f(x+h_1 \omega_1 + \ldots + h_k \omega_k)\ d\mu(x) d\mu(h_1) \ldots d\mu(h_k) \right)^{1/2^k}
 \end{equation}
where $\omega = (\omega_1,\ldots,\omega_k)$, $|\omega| := \omega_1 + \ldots + \omega_k$, and ${\mathcal C}: z \mapsto \overline{z}$ is the complex conjugation map.

From Young's inequality (see Proposition \ref{young} below, using the exponents \eqref{pqk}) and induction we obtain the inequality
\begin{equation}\label{ukg2}
\|f\|_{U^k(G)} \leq \|f\|_{L^{p_k}(G)}
\end{equation}
for all $k \geq 1$ and $f \in L^\infty_c(G)$, where $p_k$ is the \emph{critical exponent}
\begin{equation}\label{pkdef}
p_k := \frac{2^k}{k+1}
\end{equation}
associated to $k$, thus for instance
$$ p_1 = 1; \quad p_2 = 4/3; \quad p_3 = 2; \quad p_4 = 16/5; \ldots$$
This exponent is natural from the perspective of dimensional analysis, and in particular with respect to the operation of dilating the Haar measure $\mu$ to $c\mu$ for some scalar $c>0$.  Indeed, a short computation shows that
$$ \|f\|_{U^k(G, {\mathcal B}, c\mu)} = c^{1/p_k} \|f\|_{U^k(G,{\mathcal B},\mu)}.$$
As a consequence, as long as the Haar measure is not fixed, one cannot replace the $p_k$ exponent in \eqref{ukg2} by any other exponent.
On the other hand, in the compact case, the Haar measure is normalised so that $\mu(G)=1$, and then \eqref{ukg2} also holds for higher exponents than $p_k$ by H\"older's inequality, and in particular
\begin{equation}\label{ukg}
\|f\|_{U^k(G)} \leq \|f\|_{L^{\infty}(G)}.
\end{equation}

From \eqref{ukg2}, we may extend the $U^k(G)$ seminorm from $L^\infty_c(G)$ to all of $L^{p_k}(G)$ by continuity.  As the Fourier transform is an injection from $L^{4/3}(G)$ to $L^4(\hat G)$, we see that the $U^2(G)$ norm remains a norm on $L^{4/3}(G)$, and by induction the $U^k(G)$ norm remains a norm on $L^{p_k}(G)$ for $k \geq 2$.  By continuity, we see that the formulae \eqref{f1}, \eqref{fk} continue to be valid for $f \in L^{p_k}(G)$ (in particular, the integrands are absolutely integrable).

It is also convenient in applications to define the Gowers norms over the discrete interval $[N] := \{1,\ldots,N\}$.  Given a function $f: [N] \to\C$ and a positive integer $k$, we define the Gowers norm $\|f\|_{U^k([N])}$ of $f$ on $[N]$ by the formula
\begin{equation}\label{util}
 \|f\|_{U^k([N])} := \| \tilde f \|_{U^k(\Z/\tilde N\Z)} / \| 1_{[N]} \|_{U^k(\Z/\tilde N\Z)}
\end{equation}
where $\tilde N$ is any integer with $\tilde N > 2^k N$, $[N]$ is embedded into the cyclic group $\Z/\tilde N\Z$ in the obvious manner, and $\tilde f: \Z/\tilde N\Z \to \C$ is the extension of $f$ by zero outside of $[N]$.  It is easy to see that this definition does not depend on the precise choice of $\tilde N$.  Clearly we still have the analogue of \eqref{ukg}:
\begin{equation}\label{ukgn}
\|f\|_{U^k([N])} \leq \|f\|_{L^{\infty}([N])}.
\end{equation}
The situation with \eqref{ukg2} however appears to be more complicated - there seems to be a loss of a multiplicative constant in the $[N]$ setting - and we will not study it here.

Next, we turn to the \emph{Gowers-Host-Kra seminorms} arising from an ergodic measure-preserving system $(X, {\mathcal X}, \mu, T)$, by which we mean a probability space $(X, {\mathcal X}, \mu)$ together with an invertible, bimeasurable, measure-preserving shift $T: X \to X$ which acts ergodically, so that the invariant space $L^2(X, {\mathcal X}, \mu)^T$ of $L^2(X, {\mathcal X}, \mu)$ consists only of the constants (up to almost everywhere equivalence).  We abbreviate ``ergodic measure-preserving system'' as ``ergodic system'' for short.

\begin{definition}[Gowers-Host-Kra seminorms]\label{semidef}\cite{host-kra} Let $(X, {\mathcal X}, \mu, T)$ be an ergodic system, and let $f \in L^\infty(X, {\mathcal X}, \mu)$.  We define the  \emph{Gowers-Host-Kra seminorms} $\|f\|_{U^k(X)} = \|f\|_{U^k(X,{\mathcal X},\mu,T)}$ recursively for $k=1,2,\ldots$ by the formula
\begin{equation}\label{us1}
 \|f\|_{U^1(X)} := \left|\int_X f(x)\ d\mu(x)\right|
\end{equation}
and
\begin{equation}\label{usx}
 \|f\|_{U^{k+1}(X)} := \lim_{H \to \infty} \left(\E_{h \in [H]} \|(T^h f) \overline{f} \|_{U^k(X)}^{2^k}\right)^{1/2^{k+1}}
\end{equation}
for $k \geq 1$, where $T^h f := f \circ T^{-h}$.
\end{definition}

The existence of these seminorms, and the fact that these are indeed seminorms, is established in \cite{host-kra}.  The definition of the norms used there appears to be somewhat different from that presented here, but the two definitions are equivalent, as can be seen from an application of the ergodic theorem; see e.g. \cite[Appendix A]{bergelson-tao-ziegler}.

\begin{remark} It is possible to define the Gowers-Host-Kra seminorms on non-ergodic spaces as well, but in such cases the definition of the $U^1(X)$ norm must be replaced with
$$ \|f\|_{U^1(X)} := \lim_{H \to \infty} \left(\E_{h \in [H]} \int_X T^h f(x) \overline{f}(x)\ d\mu(x)\right)^{1/2}.$$
One can relate the non-ergodic norms with the ergodic norms via ergodic decomposition as follows.  Let $X^T := (X, {\mathcal X}^T, \mu^T, \id)$ be the invariant factor of $X$, thus ${\mathcal X}^T := \{ E \in {\mathcal X}: TE = E \}$ and $\mu^T$ is the restriction of $\mu$ to ${\mathcal X}$, and one has an ergodic decomposition\footnote{Strictly speaking, to construct this decomposition we need to assume that the system is \emph{standard Borel}, in that $X$ has the structure of a Polish space with ${\mathcal X}$ as its Borel $\sigma$-algebra.  In practice, we will be able to reduce to the standard Borel case in most applications, so this technicality will not concern us.  See \cite{furst-book} for further discussion.}
$\mu = \int_{X^T} \mu_y\ d\mu^T(y)$ for some $T$-invariant ergodic measures $\mu_y$ for $\mu^T$-almost every $y \in X^T$ that are depending in a measurable fashion on the parameter $y \in X^T$.  Then one can show that
$$ \|f\|_{U^k(X)} = \left(\int_{X^T} \|f\|_{U^k(X,{\mathcal X},\mu_y)}^{2^k}\ d\mu^T(y)\right)^{1/2^k}.$$
From this inequality, it is possible to extend some of the results given here for ergodic systems to the non-ergodic setting, although the statements of the hypotheses and conclusions become significantly messier.  Because of this, we will not discuss the non-ergodic case here.
\end{remark}

\begin{example}\label{uni-kra}
The Gowers uniformity norm $\|f\|_{U^k(\Z/N\Z)}$ on a cyclic group $\Z/N\Z$ can be viewed as a special case of the Gowers-Host-Kra seminorms, in which $X$ is equal to $\Z/N\Z$ with the discrete $\sigma$-algebra ${\mathcal X} := 2^{\Z/N\Z}$, the normalised counting measure $\mu(E) := |E|/N$, and the standard shift $T x := x+1$ (which is clearly ergodic); the equivalence of the two (semi)norms can be verified by an easy induction.  More generally, if $G$ is a compact abelian group, with an ergodic shift $x \mapsto x+\alpha$ and normalised Haar measure, then an easy induction shows that the Gowers norm $U^k(G)$ and the Gowers-Host-Kra seminorm $U^k(G)$ coincide.
\end{example}

The analogue of Young's inequality for ergodic systems (see Proposition \ref{young-ergodic}) gives the analogues of \eqref{ukg2}, \eqref{ukg}, namely that
\begin{equation}\label{ukp}
\|f\|_{U^k(X)} \leq \|f\|_{L^{p_k}(X)}
\end{equation}
and thus
\begin{equation}\label{ukp2}
 \|f\|_{U^k(X)} \leq \|f\|_{L^\infty(X)}
\end{equation}
for all $f \in L^\infty(X)$.  Because of this, these seminorms can be extended by continuity to $L^{p_k}(X)$ much as before.  In contrast to the group case, though, these seminorms can vanish.  Indeed, this occurs precisely when $f$ is orthogonal to the \emph{characteristic factor} $Z_{k-1}$ of the $U^k$ norms, which is an important object in the theory of multiple recurrence; see \cite{host-kra}.

\begin{remark}  The (semi)norms $U^k(G)$, $U^k([N])$, $U^k(X)$ are so similar that it is tempting to create a very general definition of Gowers-type norms that encompasses all three concepts at once.  It seems that the machinery of nilspaces (see \cite{host-kra-parallel}, \cite{cs}; the concept is a variant of the notion of a \emph{cubic complex} from topology) would be particularly suitable for this purpose, as would a reformulation in terms of nonstandard analysis (or by closely related devices, such as ultraproducts).  However, we will not attempt to perform such a unification here, thus creating three parallel (but closely analogous) streams of results instead.  One reason for this is that there are some non-trivial technical differences between the \emph{proofs} of the results in the three categories, particularly with regards to the $U^k([N])$ category, where the failure of $[N]$ to be perfectly closed under addition leads to some complications when one attempts to adapt arguments that were successful in the other two categories.
\end{remark}

\subsection{Near-maximal values of the Gowers norms and Gowers-Host-Kra seminorms}

The first main results of this paper address the question of when the inequalities \eqref{ukg}, \eqref{ukg2}, \eqref{ukgn}, \eqref{ukp}, \eqref{ukp2} are satisfied with equality or near-equality.  For the $L^\infty$-based inequalities \eqref{ukg}, \eqref{ukgn}, \eqref{ukp}, the answer to this question is given in terms of the notion of a \emph{polynomial}.   We first define this concept on groups:

\begin{definition}[Polynomials on groups]  Let $k \geq -1$ be an integer. A measurable map $P: G \to H$ between two LCA groups $G, H$ is called a \emph{polynomial map of degree $\leq k$} if one has
$$ \Delta_{h_1} \ldots \Delta_{h_{k+1}} P(x) = 0$$
for almost every $x,h_1,\ldots,h_{k+1} \in G$, where $\Delta_h P(x) := P(x-h)-P(x)$.
\end{definition}

\begin{example} The only polynomial map of degree $\leq -1$ is the zero map.  The only polynomials of degree $\leq 0$ are the constant maps.  The only polynomials of degree $\leq 1$ are affine homomorphisms, i.e. sums of constants and group homomorphisms.
\end{example}

An easy induction (using the strong continuity properties of the translation action $T^h: P \mapsto P(\cdot-h)$) shows that a measurable polynomial is equal almost everywhere to a continuous polynomial.  Of course, in the case of discrete groups $G$, and in particular finite groups, all polynomials are automatically measurable and continuous.

In the case of the integers $G=\Z$, another easy induction shows that a map $P: \Z \to H$ is a polynomial of degree $\leq k$ if and only if it takes the form
$$ f(n) = \sum_{i=0}^k \binom{n}{i} c_i$$
for some coefficients $c_0,\ldots,c_k \in H$, where
$$ \binom{n}{i} := \frac{n(n-1)\ldots(n-i+1)}{i!}$$
are the (generalised) binomial coefficients.  In the case when the target group $H$ is \emph{divisible}, which means that for every $x \in H$ and positive integer $m$ there exists $y \in H$ such that $my=x$, we can rewrite $f$ in the form
$$ f(n) = \sum_{i=0}^k n^i c'_i$$
for some other coefficients $c'_0,\ldots,c'_k \in H$.  We note that any connected compact abelian Lie group is divisible, and in particular the unit circle $\R/\Z$ is divisible.

In the case when $G$ is a Euclidean space $\R^d$, and $H$ is the unit circle $\R/\Z$, then it is easy to see (by approximating the Euclidean space $\R^d$ by a lattice $\eps \Z^d$ and then taking limits as $\eps \to 0$) that a map $P: \R^d \to \R/\Z$ is polynomial of degree $\leq k$ if and only if it takes the form
$$ f(x_1,\ldots,x_d) := \sum_{i_1,\ldots,i_d \ge 0: i_1+\ldots+i_d \leq k} c_{i_1,\ldots,i_d} x_1^{i_1} \ldots x_d^{i_d} \mod 1$$
for some coefficients $c_{i_1,\ldots,i_d} \in \R$.

We can similarly define the notion of a polynomial on ergodic systems:

\begin{definition}[Polynomials on systems]  Let $k \geq -1$ be an integer. A measurable map $f: X \to H$ between an ergodic system $X = (X,{\mathcal X},\mu,T)$ and a LCA group $H$ is called a \emph{polynomial map of degree $\leq k$} if one has
$$ \Delta_{h_1} \ldots \Delta_{h_{k+1}} f(x) = 0$$
for $\mu$-almost every $x \in X$ and every $h_1,\ldots,h_{k+1} \in \Z$, where $\Delta_h := T^h - 1$.
\end{definition}

\begin{example} Let $f \in L^\infty(X)$ be a non-trivial eigenfunction of $X$, thus $f$ is not identically zero and $Tf = \lambda f$ for some complex number $\lambda$ (which one can show to necessarily have magnitude $1$).  Then one can show that $f = c \cdot e(P)$ for some complex constant $c$ and some polynomial $P: X \to \R/\Z$ of degree $\leq 1$, where $e(x) :=e^{2\pi ix}$.  Conversely, every function of this form is an eigenfunction of $X$.

Functions of the form $c \cdot e(P)$, where $P:X \to \R/\Z$ is of degree $\leq k$, are known as \emph{generalised eigenfunctions of order $\leq k$}.  For instance, if we take the skew shift system $X := (\R/\Z)^2$ with the standard measure and the shift $T(x,y) := (x+\alpha,y+x)$ for a fixed irrational $\alpha$, then the function $f(x,y) := e(y)$ is a generalised eigenfunction of order $\leq 2$; note that $\Delta_h f(x,y) = e(-hx - \binom{-h}{2}\alpha)$ is an ordinary eigenfunction for every $h \in \Z$.
\end{example}

The relevance of polynomials to the Gowers norms can be seen as follows.  If $k \geq 1$, $G$ is a LCA group, and $P: G \to \R/\Z$ is a polynomial of degree $\leq k-1$, then we have the invariance
\begin{equation}\label{pepg}
 \| f \cdot e(P) \|_{U^k(G)} = \|f\|_{U^k(G)}
\end{equation}
for every $f \in L^{p_k}(G)$.  Similarly we have
\begin{equation}\label{pepn}
 \| f \cdot e(P) \|_{U^k([N])} = \|f\|_{U^k([N])}
\end{equation}
for every $f \in L^{p_k}([N])$ and polynomial $P: \Z \to \R/\Z$ of degree $\leq k-1$, and
\begin{equation}\label{pepx}
 \| f \cdot e(P) \|_{U^k(X)} = \|f\|_{U^k(X)}
\end{equation}
for every ergodic system $X$, every $f \in L^{p_k}(X)$, and every polynomial $P: X \to \R/\Z$ of degree $\leq k-1$.  In particular, we see that equality holds in \eqref{ukg}, \eqref{ukgn}, or \eqref{ukp} when $f$ takes the form $f = c \cdot e(P)$ for a polynomial $P$ (of the suitable type) of degree $\leq k-1$.

Our first main results are that these are in fact the \emph{only} cases in which equality in \eqref{ukg}, \eqref{ukgn}, \eqref{ukp} occurs, and they also control all the cases in which \emph{near-equality} occurs.  To formulate this properly, it is convenient to introduce the following notation.  Given an asymptotic parameter $\eps \geq 0$ and some additional quantities $a_1,\ldots,a_m,A$, we use $o_{\eps \to 0; a_1,\ldots,a_m}(A)$ to denote any expression bounded in magnitude by $c_{a_1,\ldots,a_m}(\eps) A$, where $c_{a_1,\ldots,a_m}(\eps)$ is an expression depending only on $a_1,\ldots,a_m,\eps$ that goes to zero as $\eps \to 0$ for fixed $a_1,\ldots,a_m$.  We will use the expression $o_{\eps \to 0;k}(A)$ particularly often, and will abbreviate this expression as $o(A)$.  In most situations in this paper, $A$ will simply be equal to $1$.

\begin{theorem}[$L^\infty$ near-extremisers on compact abelian groups]\label{infty-group}  Let $k \geq 1$ be an integer, let $G$ be a compact abelian group, and let $f \in L^\infty(G)$ be such that $\|f\|_{L^\infty(G)} \leq 1$.  Let $\eps \geq 0$.
\begin{enumerate}
\item (Extremisers)  One has $\|f\|_{U^k(G)} = 1$ if and only if $f = e(P)$ almost everywhere for some polynomial $P: G \to \R/\Z$ of degree $\leq k-1$.
\item (Near-extremisers) If $\|f\|_{U^k(G)} \geq 1-\eps$, then there exists a polynomial $P: G \to \R/\Z$ of degree $\leq k-1$ such that $\|f-e(P)\|_{L^1(G)} = o(1)$.
\end{enumerate}
\end{theorem}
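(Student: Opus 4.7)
I would prove statements (1) and (2) simultaneously by induction on $k$. The base case $k=1$ is immediate: $\|f\|_{U^1(G)} = |\int_G f\,d\mu|$, and both statements follow from the equality/stability of the triangle inequality. For (2), writing $\int f = re^{i\theta}$ with $r \ge 1-\eps$ and using the pointwise bound $|1 - e^{-i\theta}f|^2 \le 2(1 - \Re(e^{-i\theta}f))$ (valid since $|f|\le 1$) gives $\|f - e(\theta/(2\pi))\|_{L^1(G)} \le \sqrt{2\eps}$.

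For the inductive step, I exploit the recursion
\[
\|f\|_{U^{k+1}(G)}^{2^{k+1}} = \int_G \|T^h f \cdot \overline{f}\|_{U^k(G)}^{2^k}\,d\mu(h),
\]
noting that each integrand is bounded by $\|T^h f \overline f\|_{L^\infty}^{2^k} \le 1$. In the extremal case this integrand equals $1$ for $\mu$-a.e.\ $h$, and the inductive hypothesis produces polynomials $P_h : G \to \R/\Z$ of degree $\le k-1$ with $T^h f \overline f = e(P_h)$ a.e. Since $|e(P_h)| = 1$, Fubini yields $|f| = 1$ a.e., so $f = e(Q)$; the relation $\Delta_h Q = P_h \pmod 1$ then gives $\Delta_{h_0}\cdots\Delta_{h_k} Q = \Delta_{h_1}\cdots\Delta_{h_k} P_{h_0} = 0 \pmod 1$ a.e., so $Q$ is a polynomial of degree $\le k$.

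In the near-extremal case, Markov's inequality yields $\|T^h f \overline f\|_{U^k(G)} \ge 1-o(1)$ for $h$ outside an exceptional set of $\mu$-measure $o(1)$; the inductive hypothesis provides polynomials $P_h$ of degree $\le k-1$ with $\|T^h f \overline f - e(P_h)\|_{L^1} = o(1)$ for such $h$. Taking moduli and averaging over the good $h$ gives $\|f\|_{L^1}^2 \ge 1-o(1)$, whence $\||f|-1\|_{L^1} = o(1)$. Replacing $f$ by $\tilde f := f/|f|$ on $\{|f|>0\}$ (and $1$ elsewhere), one has $|\tilde f| = 1$ and $\|f-\tilde f\|_{L^{p_{k+1}}} = o(1)$ by interpolation with the uniform bound $\|f-\tilde f\|_{L^\infty} \le 2$, so \eqref{ukg2} yields $\|\tilde f\|_{U^{k+1}} \ge 1-o(1)$. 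Writing $\tilde f = e(Q)$, the same analysis produces, for most $h$, polynomials $\tilde P_h$ of degree $\le k-1$ with $\|e(\Delta_h Q) - e(\tilde P_h)\|_{L^1} = o(1)$.

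The main obstacle is the final \emph{patching step}: combining these approximate fibre polynomials $\tilde P_h$ into a single polynomial $P$ of degree $\le k$ with $\|Q - P\|_{L^1(G,\R/\Z)} = o(1)$. The guiding computation is the exact analogue: setting $g_h := T^h \tilde f \cdot \overline{\tilde f}$, the identity $g_h(x) g_{h'}(x-h) = g_{h+h'}(x)$ gives the cocycle
\[
\tilde P_{h+h'}(x) = \tilde P_h(x) + \tilde P_{h'}(x-h) \pmod 1,
\]
from which $R(y) := \tilde P_{-y}(0)$ satisfies $\Delta_{-y'}R(y) = \tilde P_{-y'}(y)$, a polynomial of degree $\le k-1$ in $y$; iterating shows $\deg R \le k$ and $Q = Q(0) + R$. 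In the approximate setting one must replace point evaluation by a mollified averaging and carefully propagate the $o(1)$ errors through this $k$-fold iteration, but the overall scheme mirrors the exact case. Once the patching is complete, $\|f - e(Q(0)+R)\|_{L^1} = o(1)$ follows by the triangle inequality.
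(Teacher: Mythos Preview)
Your overall architecture matches the paper's: induct on $k$, use the recursion and Markov to get polynomials $P_h$ of degree $\le k-1$ with $T^h f\,\overline f \approx e(P_h)$ for most $h$, normalise to $|f|=1$, and then ``patch'' the $P_h$ into a single degree $\le k-1$ polynomial. Your treatment of the base case and the exact case is fine and essentially the same as the paper's.

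The gap is in the patching step for the near-extremal case, which you correctly flag as the main obstacle but do not actually resolve. Your plan --- carry over the exact cocycle argument with $R(y):=\tilde P_{-y}(0)$, replacing point evaluation by mollified averaging --- runs into two problems you do not address. First, your $\tilde P_h$ are only defined for $h$ in a set of measure $1-o(1)$, so $R(y)$ is not defined everywhere; the paper handles this by writing an arbitrary $h$ as $a+b$ with $a,b$ in the good set and setting $Q_h := T^a P_b + P_a$, thereby obtaining polynomials $Q_h$ for \emph{every} $h$. Second, and more seriously, the cocycle identity $Q_{h+h'} = T^h Q_{h'} + Q_h$ only holds approximately, and an averaging scheme will not obviously produce an honest polynomial out of merely approximate data. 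The paper's key device here is a rigidity result for polynomials (the \emph{Separation Lemma}: if $P$ has degree $\le k$ and $\|e(P)-1\|_{L^2}<2^{-k+1/2}$ then $P$ is constant). Applying it to $Q_{h+h'}-T^hQ_{h'}-Q_h$ upgrades the approximate cocycle relation to an \emph{exact} one modulo constants $c_{h,h'}\in\R$. These constants then satisfy an exact $2$-cocycle equation on $G$, which is a coboundary $c_{h,h'}=b(h)+b(h')-b(h+h')$ by averaging in one variable; absorbing $b(h)$ into $Q_h$ gives an exact cocycle $\tilde Q_{h+h'}=T^h\tilde Q_{h'}+\tilde Q_h$, and \emph{now} the point-evaluation formula $\phi(x):=\tilde Q_{-x}(0)$ genuinely works and yields a polynomial of degree $\le k-1$ with $\tilde Q_h=\Delta_h\phi$.

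In short, the missing idea is that polynomials of bounded degree are \emph{discrete} in $L^2$, so an approximate polynomial identity is automatically exact up to constants; this is what lets one avoid the error-propagation problem entirely rather than push $o(1)$ errors through a $k$-fold iteration.
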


\begin{theorem}[$L^\infty$ near-extremisers on intervals]\label{infty-N}  Let $k \geq 1$ be an integer, and let $N \geq 1$ be a sufficiently large integer depending on $k$.  Let $f \in L^\infty([N])$ be such that $\|f\|_{L^\infty([N])} \leq 1$.  Let $\eps \geq 0$.
\begin{enumerate}
\item (Extremisers)  One has $\|f\|_{U^k([N])} = 1$ if and only if $f = e(P)$ for some polynomial $P: \Z \to \R/\Z$ of degree $\leq k-1$.
\item (Near-extremisers) If $\|f\|_{U^k([N])} \geq 1-\eps$, then there exists a polynomial $P: \Z \to \R/\Z$ of degree $\leq k-1$ such that $\|f-e(P)\|_{L^1([N])} = o(1)$.
\end{enumerate}
\end{theorem}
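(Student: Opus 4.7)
The plan is to reduce the interval statement to a sharp inequality on the cyclic group $\Z/\tilde N\Z$.  Fix $\tilde N > 2^k N$ and set $g := \tilde f \in L^\infty(\Z/\tilde N\Z)$, the extension of $f$ by zero; then $|g| \leq 1_{[N]}$ pointwise, and by \eqref{util} the hypothesis $\|f\|_{U^k([N])} \geq 1-\eps$ is equivalent to $\|g\|_{U^k(\Z/\tilde N\Z)} \geq (1-\eps)\|1_{[N]}\|_{U^k(\Z/\tilde N\Z)}$.  Expanding via \eqref{fug} and using that $\|g\|_{U^k}^{2^k}$ is automatically a non-negative real, I obtain the sharp chain
$$ \|g\|_{U^k(\Z/\tilde N\Z)}^{2^k} = \int \Re\left(\prod_{\omega \in \{0,1\}^k} {\mathcal C}^{|\omega|} g(x + h \cdot \omega)\right)\,d\mu \leq \int \prod_\omega 1_{[N]}(x + h \cdot \omega)\,d\mu = \|1_{[N]}\|_{U^k(\Z/\tilde N\Z)}^{2^k}, $$
which is the analogue on $[N]$ of the trivial bound \eqref{ukg}.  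Near-equality in the intermediate $\leq$ is the information I extract.

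For part (i), the $\leq$ must be an equality, and since it packages two pointwise inequalities ($\Re(\cdot) \leq |\cdot|$ and $|g| \leq 1_{[N]}$ applied vertex-by-vertex on the cube), both must be saturated almost everywhere.  The second saturation forces $|g| = 1_{[N]}$ at every vertex of every cube with all vertices in $[N]$; specializing to the degenerate cube $h = 0$ gives $|f| \equiv 1$ on $[N]$.  Writing $f = e(\phi)$ with $\phi: [N] \to \R/\Z$, the first saturation on non-degenerate cubes in $[N]$ becomes $\sum_\omega (-1)^{|\omega|} \phi(x + h \cdot \omega) \in \Z$, i.e.\ $\Delta_{h_1} \cdots \Delta_{h_k} \phi \equiv 0 \pmod 1$, which exhibits $\phi$ as a polynomial of degree $\leq k-1$ on $[N]$.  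Because $N$ is large compared to $k$, Newton's forward-difference formula extends $\phi$ uniquely to a polynomial $P: \Z \to \R/\Z$ of the same degree; the converse direction is immediate from \eqref{pepn}.

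For part (ii), I would deduce the stability statement from (i) by a compactness/ultralimit argument.  Assume for contradiction a sequence $(f_n, N_n)$ with $N_n \to \infty$, $\|f_n\|_{L^\infty} \leq 1$, $\|f_n\|_{U^k([N_n])} \to 1$, yet $\inf_P \|f_n - e(P)\|_{L^1([N_n])}$ bounded away from zero (infimum over polynomials of degree $\leq k-1$).  Passing to a suitable ultraproduct produces a function on a Loeb measure space supported on a nonstandard interval for which the chain above is saturated; the extremizer analysis of (i), transferred back by the ultraproduct transfer principle, then supplies the missing polynomial phase and contradicts the lower bound.  A hands-on alternative is induction on $k$: the recursion \eqref{fk} shows that for most small $h$ the derivative $T^h \tilde f \cdot \overline{\tilde f}$ has near-extremal $U^{k-1}$ norm relative to $T^h 1_{[N]} \cdot 1_{[N]}$, so the inductive hypothesis yields polynomials $P_h$ of degree $\leq k-2$, which must be integrated in $h$ to recover a single $P$ of degree $\leq k-1$ for $f$.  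The main obstacle is exactly this reassembly: each $P_h$ is determined only on the shifted overlap $h + [N] \cap [N]$, so one must establish approximate consistency across overlaps and extend to a single polynomial on all of $\Z$ (not merely on a cyclic group)---this boundary effect is precisely what the authors flag in their remark as distinguishing the $U^k([N])$ case from the clean compact group setting of Theorem~\ref{infty-group}.
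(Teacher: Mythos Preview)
Your treatment of part (i) is correct and in fact cleaner than the paper's (which obtains (i) only as a limiting case of (ii)): the pointwise saturation of the chain $\Re(\prod_\omega {\mathcal C}^{|\omega|} g) \leq \prod_\omega |g| \leq \prod_\omega 1_{[N]}$ on the diagonal cube gives $|f|\equiv 1$, and on cubes with all shifts equal to $1$ gives the vanishing $k$th forward difference of the phase, whence the Newton interpolation.

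Part (ii), however, is not proved.  The compactness route does not close: in the ultraproduct the internal $U^k$ norm is only infinitesimally close to $1$, not equal to $1$, so your part (i) analysis is not available internally; if instead you pass to the Loeb measure and argue as in (i), the saturation conditions hold only Loeb-a.e., and the step ``$\Delta_{h_1}\cdots\Delta_{h_k}\phi=0$ for almost every cube $\Rightarrow$ $\phi$ is (a.e.) a polynomial of degree $\leq k-1$'' on a nonstandard interval is \emph{exactly} the near-extremiser statement you are trying to prove, so the argument is circular.  (A correspondence-principle detour through Theorem~\ref{infty-ergodic} would produce a polynomial on an abstract system $X$, not on $\Z$.)  Your second suggestion, induction on $k$, is the paper's method, but you stop at the point where the real work begins.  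The paper's resolution of the ``reassembly'' obstacle is substantial: after obtaining $P_h$ for most small $h$ it extends to all $h$ in a short interval, proves a quantitative separation lemma (Lemma~\ref{pcon2}) showing that the $2$-cocycle $c_{h,h'} := Q_{h+h'} - T^h Q_{h'} - Q_h$ is a polynomial with $i$th coefficient $o(N^{-i})$, and then runs a downward induction on the degree of $c_{h,h'}$, at each stage using a localised averaging trick (the interval analogue of the $2$-coboundary argument) to kill the top coefficient.  Only after this does one obtain a genuine cocycle identity $\tilde Q_{h+h'} = T^h\tilde Q_{h'} + \tilde Q_h$ on a short interval, from which a single polynomial $P$ can be built; a further propagation step then extends the approximation from $[N/2]$ to all of $[N]$.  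None of this is routine, and your proposal does not supply it.
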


\begin{theorem}[$L^\infty$ near-extremisers on ergodic systems]\label{infty-ergodic}  Let $k \geq 1$ be an integer, let $X$ be an ergodic measure-preserving system, and let $f \in L^\infty(X)$ be such that $\|f\|_{L^\infty(X)} \leq 1$.  Let $\eps \geq 0$.
\begin{enumerate}
\item (Extremisers)  One has $\|f\|_{U^k(X)} = 1$ if and only if $f = e(P)$ almost everywhere for some polynomial $P: X \to \R/\Z$ of degree $\leq k-1$.
\item (Near-extremisers) If $\|f\|_{U^k(X)} \geq 1-\eps$, then there exists a polynomial $P: X \to \R/\Z$ of degree $\leq k-1$ such that $\|f-e(P)\|_{L^1(X)} = o(1)$.
\end{enumerate}
\end{theorem}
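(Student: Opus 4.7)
The plan is to prove the theorem by induction on $k \ge 1$, paralleling the inductive arguments for Theorems \ref{infty-group} and \ref{infty-N}. The base case $k=1$ is an elementary $L^2$ estimate: since $\|f\|_{U^1(X)} = |\int_X f\, d\mu|$, near-extremality together with $\|f\|_\infty \le 1$ forces $f$ to be $L^2$-close (and hence $L^1$-close) to the unimodular constant $c := (\int_X f\, d\mu)/|\int_X f\, d\mu|$, which equals $e(P)$ for a constant polynomial $P$.

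For the inductive step from $k$ to $k+1$, suppose $\|f\|_{U^{k+1}(X)} \ge 1 - \eps$ and $\|f\|_\infty \le 1$. From Definition \ref{semidef} we have $(1-\eps)^{2^{k+1}} \le \lim_{H \to \infty} \E_{h \in [H]} \|T^h f \cdot \overline{f}\|_{U^k(X)}^{2^k}$, with each term in the average bounded by $\|T^h f \cdot \overline{f}\|_\infty^{2^k} \le 1$. A Markov-type argument then shows that for $h$ in a set of density at least $1 - o(1)$ in $\Z$ one has $\|T^h f \cdot \overline{f}\|_{U^k(X)} \ge 1 - o(1)$; the inductive hypothesis, applied to $T^h f \cdot \overline{f}$, produces a polynomial $P_h : X \to \R/\Z$ of degree $\le k-1$ with $\|T^h f \cdot \overline{f} - e(P_h)\|_{L^1(X)} = o(1)$.

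From this derivative-type information we first extract that $|T^h f|\cdot|f|$ is $L^1$-close to $1$ for most $h$; by the $T$-invariance of $\mu$ and a short Cauchy--Schwarz estimate this forces $|f|$ itself to be $L^1$-close to $1$. We may therefore replace $f$ by its unimodular version $f/|f|$ (defined arbitrarily where $|f|=0$) at the cost of an $o(1)$ error in $L^1$ (and hence in $U^{k+1}$ via \eqref{ukp} and log-convexity of $L^p$ norms between $L^1$ and $L^\infty$); writing $f = e(\phi)$ for some measurable $\phi: X \to \R/\Z$, the identity $T^h f \cdot \overline{f} = e(\Delta_h \phi)$ translates the previous conclusion into the assertion that $\Delta_h \phi$ lies within $\R/\Z$-valued $L^1$-distance $o(1)$ of the abelian group $\Poly_{\le k-1}(X \to \R/\Z)$, for a density-$(1-o(1))$ set of $h \in \Z$.

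The main obstacle is the final \emph{synthesis} step: to deduce from this approximate derivative information that $\phi$ itself is $L^1$-close to $\Poly_{\le k}(X \to \R/\Z)$. In the exact case $\eps = 0$, which handles the extremiser statement, this is a direct calculation: applying $\Delta_{h_2}\cdots \Delta_{h_{k+1}}$ to the identity $\Delta_{h_1}\phi = P_{h_1}$ gives $\Delta_{h_1}\cdots\Delta_{h_{k+1}}\phi = 0$ for $h_1$ in a density-$1$ subset of $\Z$, and the cocycle identity $\Delta_{h+h'}\phi = \Delta_h \phi + T^h \Delta_{h'} \phi$ together with the $T$-invariance of $\Poly_{\le k-1}$ propagates the conclusion to every $h \in \Z$, so $\phi \in \Poly_{\le k}$. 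To pass from the exact to the near-extremal case, my proposed route is a compactness argument by contradiction via an ergodic ultraproduct: a failing sequence $(X_n, f_n)$ of counterexamples yields an ergodic ultraproduct system $(X_\infty, f_\infty)$ on which the exact extremiser identity $\|f_\infty\|_{U^{k+1}(X_\infty)} = 1$ holds; applying the extremiser statement gives $f_\infty = e(P_\infty)$ with $P_\infty$ polynomial of degree $\le k$, and descending this approximation back to the $X_n$ yields the required bound. The delicate technical point is to verify that the Gowers--Host--Kra seminorm, the class of polynomial phases, and the $L^1$ metric all behave continuously under this ultraproduct, leveraging the standing second-countability assumption on the systems.
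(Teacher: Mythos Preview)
Your setup through the reduction to $|f|=1$ tracks the paper closely, but the synthesis step is where the proposal has a genuine gap. The paper does \emph{not} argue by compactness. Instead it runs a direct, effective cohomological argument: after extending the approximate relation $\|T^h f - e(Q_h) f\|_{L^1} = o(1)$ from a very dense set to all $h\in\Z$, the separation lemma for polynomials (\cite[Lemma~C.1]{bergelson-tao-ziegler}) forces $Q_{h+h'} - T^h Q_{h'} - Q_h$ to be an exact constant $c_{h,h'} = o(1)$. These constants form a small $2$-cocycle on $\Z$, which is made into a $2$-coboundary by averaging against an invariant mean on $\ell^\infty(\Z)$; the corrected $\tilde Q_h := Q_h + b(h)$ then satisfy the \emph{exact} cocycle equation $\tilde Q_{h+h'} = T^h\tilde Q_{h'} + \tilde Q_h$. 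Finally, the mean ergodic theorem on the skew-product $X \times \R/\Z$ with cocycle $\tilde Q_1$ shows that $\E_{h\in[H]}\, e(-\tilde Q_h)\,T^h f$ converges in $L^1$ to a function $F$ with $T^h F = e(\tilde Q_h) F$; ergodicity then gives $F = c\,e(P)$ for a polynomial $P$ of degree $\le k-1$, and $\|f - e(P)\|_{L^1} = o(1)$ follows.

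Your ultraproduct route, by contrast, leaves the entire content of the theorem inside the phrase ``delicate technical point.'' Each of the three continuity claims you flag is a serious obstacle, not a formality: (i)~an ultraproduct of ergodic $\Z$-systems is in general \emph{not} ergodic, so you cannot invoke the extremiser case on $X_\infty$ as stated; (ii)~the Gowers--Host--Kra seminorm is defined through a Ces\`aro limit $\lim_{H\to\infty}\E_{h\in[H]}$, and you offer no mechanism for why $\|f_\infty\|_{U^{k+1}(X_\infty)}$ (computed with standard $H\to\infty$) should coincide with the ultralimit of $\|f_n\|_{U^{k+1}(X_n)}$ --- this amounts to asking the Host--Kra cube measures to pass to the ultraproduct, itself a substantial claim; (iii)~even granting $f_\infty = e(P_\infty)$, descending $P_\infty$ to genuine polynomials $P_n$ on the individual $X_n$ with $\|f_n - e(P_n)\|_{L^1(X_n)}\to 0$ is not addressed, and there is no obvious transfer principle for the polynomial structure. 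Resolving these would require machinery at least as heavy as the paper's direct argument, and would in any case yield only ineffective bounds, whereas the paper's route gives polynomial dependence on~$\eps$. (A smaller issue: in your exact-case sketch you assert the \emph{identity} $\Delta_{h_1}\phi = P_{h_1}$ on a density-$1$ set, but the Markov step only gives $\|T^{h_1}f\cdot\overline{f}\|_{U^k}$ \emph{close} to $1$ there, not equal; one still needs a separation-lemma or cube-measure argument to upgrade this.)
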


We will prove these theorems in Sections \ref{infty-group-sec}, \ref{infty-n-sec}, \ref{infty-system-sec} respectively.  We remark that Theorem \ref{infty-group}, which can be interpreted as an assertion that the property of being a polynomial is locally testable, was essentially established in the case when $G$ is a vector space over a finite field in \cite{akklr} (see also \cite{blr} for the $k=2$ case, or \cite{tao-focs}, \cite{tao-ziegler} for a more explicit formulation of this result).

As one might expect, the proofs of the three results are very similar to each other, and proceed by an induction on $k$; they are the easiest of all the results in this paper to prove.  The main difficulty is a ``cohomological'' one, namely to show that a certain ``$2$-cocycle'' arising from applying the induction hypothesis to ``derivatives'' $(T^h f) \overline{f}$ of $f$ is in fact a ``$2$-coboundary''.  However, when $\eps$ is small enough, this $2$-cocycle takes on small values, and one can obtain this $2$-coboundary property by a routine averaging argument.  (The situation is more delicate on $[N]$, as the domain is no longer shift-invariant, but a more sophisticated version of this argument still applies.)  We will not actually use advanced cohomological tools in our arguments, though, and the reader may ignore the references to cohomological notation in this paper if desired.

\begin{remark} Our arguments give an effective bound on the $o(1)$ decay rates, which are of polynomial nature on $\eps$.  However, we will not attempt to optimise or make explicit these rates here.
\end{remark}

Next, we consider extreme or near-extreme cases of the critical inequalities \eqref{ukg2}, \eqref{ukp2}, which compare the Gowers norms $U^k$ to their Lebesgue counterparts $L^{p_k}$.  From the polynomial phase invariance \eqref{pepg}, \eqref{pepx} we expect polynomial phases $e(P)$ to continue to play a prominent role.  However, due to the critical nature of these inequalities, another object now also comes into play, namely the \emph{cosets}.

\begin{definition}[Cosets in a LCA group]  Let $G=(G,+,{\mathcal B},\mu)$ be an LCA group.  A \emph{coset} in $G$ is any set of the form $H = x_0 + H_0$, where $x_0 \in G$ and $H_0 \leq G$ is a compact open subgroup of $G$ (which implies in particular that $0 < \mu(H_0) < \infty$).  We define the \emph{normalisation} of the coset $H$ to be the compact abelian group $H_0$ with the normalised Haar measure $\frac{1}{\mu(H_0)} \mu \downharpoonright_{H_0}$.
\end{definition}

\begin{definition}[Cosets in an ergodic system]\label{cosets}  Let $X=(X,{\mathcal X},\mu,T)$ be an ergodic system.  A \emph{coset} in $X$ is any measurable set $H$ in $X$ with the property that there exists an integer $m \geq 1$ such that $T^m H = H$ up to $\mu$-null sets, and that the sets $H, TH, \ldots, T^{m-1} H$ partition $X$ up to $\mu$-null sets.  (In particular, this forces $\mu(H) = 1/m$.)  We call $m$ the \emph{index} of $H$.  We define the \emph{normalisation} of the coset $H$ to be the measure-preserving system $H = (H, {\mathcal X}\downharpoonright_H, \frac{1}{\mu(H)} \mu\downharpoonright_H, T^m)$.
\end{definition}

\begin{example}  If $G=X=\Z/N\Z$ with the usual shift, then the above two notions of coset coincide with each other, and with the familiar notion of a coset from group theory.    More generally, a coset in an ergodic system $X$ is the same concept as a group-theoretic coset of the \emph{Kronecker factor} $Z_1$ of $X$; see Remark \ref{coset}.
\end{example}

\begin{example} An ergodic system has no nontrivial cosets if and only if is \emph{totally ergodic}, in that $T^h$ is ergodic for every non-zero $h$.
\end{example}

We observe that the normalisation of a coset $H$ in an ergodic measure-preserving system $X$ is again ergodic.  Indeed, if $f \in L^2(H)$ is $T^m$-invariant, then $f + Tf + \ldots + T^{m-1} f$ is $T$-invariant in $L^2(G)$, where we extend $f$ by zero to all of $G$, and the claim follows.

The relevance of cosets arises from the following observation.  If $G$ is an LCA group with a coset $H = x_0+H_0$ and normalisation $H_0$, and $\tilde f \in L^{p_k}(H_0)$, then straightforward induction shows that the function $f \in L^{p_k}(G)$ defined by $f(x) := \mu(H)^{-1/p_k} 1_H(x) \tilde f(x-x_0)$ obeys the scaling relationships
$$ \|f\|_{U^k(G)} = \|\tilde f\|_{U^k(H)}$$
and
$$ \|f\|_{L^{p_k}(G)} = \|\tilde f\|_{L^{p_k}(H)}$$
where of course we use the normalised measure to compute the norms on the right-hand side.
Similarly, if $X$ is an ergodic measure-preserving system with a coset $H$, and $\tilde f \in L^{p_k}(H)$, then another straightforward induction shows that the function $f \in L^{p_k}(G)$ defined by $f(x) := \mu(H)^{-1/p_k} 1_H(x) \tilde f(x)$ obeys the scaling relationships
$$ \|f\|_{U^k(X)} = \|\tilde f\|_{U^k(H)}$$
and
$$ \|f\|_{L^{p_k}(X)} = \|\tilde f\|_{L^{p_k}(H)}.$$

Because of this, we see that we have a more general class of extremisers for \eqref{ukg2}, \eqref{ukp2}.  Indeed, if $H=x_0+H_0$ is a coset of a LCA group $G$ and $P: H_0 \to \R/\Z$ is a polynomial of degree $\leq k-1$, then from the above discussion we see that the function $f(x) := c 1_H(x) e( P(x-x_0))$, for any constant $c \in \C$, obeys \eqref{ukg2} with equality.  Similarly, if $H$ is a coset of an ergodic system $X$, and $P: H \to \R/\Z$ is a polynomial of degree $\leq k-1$, then the function $f(x) := c 1_H(x) e(P(x))$ for any constant $c \in \C$ obeys \eqref{ukp2} with equality.

Our next two main results assert that these are the essentially only means to create extremals or near-extremals for \eqref{ukg2}, \eqref{ukp2}, once one avoids the degenerate case $k=1$ (for which there are clearly plenty of such near-extremals):

\begin{theorem}[$L^{p_k}$ near-extremisers on LCA groups]\label{p-group}  Let $k \geq 2$ be an integer, let $G$ be a locally compact abelian group, and let $f \in L^{p_k}(G)$ be such that $\|f\|_{L^{p_k}(G)} \leq 1$.  Let $\eps \geq 0$.
\begin{enumerate}
\item (Extremisers)  One has $\|f\|_{U^k(G)} = 1$ if and only if
$$f(x) = \mu(H)^{-1/p_k} 1_H(x) e(P(x-x_0))$$
almost everywhere for some coset $H = x_0+H_0$ and polynomial $P: H_0 \to \R/\Z$ of degree $\leq k-1$.
\item (Near-extremisers) If $\|f\|_{U^k(G)} \geq 1-\eps$, then there exists a coset $H = x_0+H_0$ and a polynomial $P: H_0 \to \R/\Z$ of degree $\leq k-1$ such that
$$\|f-\mu(H)^{-1/p_k} 1_H e(P(\cdot-x_0)) \|_{L^{p_k}(G)} = o(1).$$
\end{enumerate}
\end{theorem}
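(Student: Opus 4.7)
The plan is to induct on $k \geq 2$.

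\textbf{Base case $k = 2$}: by Plancherel, $\|f\|_{U^2(G)} = \|\hat f\|_{L^4(\hat G)}$, so the critical inequality is the Hausdorff--Young inequality $\|\hat f\|_{L^4(\hat G)} \leq \|f\|_{L^{4/3}(G)}$. Its extremisers and near-extremisers on LCA groups were characterised classically by Russo~\cite{russo} and Fournier~\cite{fournier} to be precisely those functions close to $c \cdot 1_H \cdot e(P)$ with $H$ a coset of a compact open subgroup and $P$ an affine phase, which is exactly the statement of the theorem at $k = 2$.

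\textbf{Inductive step from $k$ to $k+1$}: writing $g_h := (T^h f)\bar f$, the recursion \eqref{fk} and Young's inequality combine to give
\[
\|f\|_{U^{k+1}(G)}^{2^{k+1}} = \int_G \|g_h\|_{U^k(G)}^{2^k}\, d\mu(h) \leq \int_G \|g_h\|_{L^{p_k}(G)}^{2^k}\, d\mu(h) \leq \|f\|_{L^{p_{k+1}}(G)}^{2^{k+1}}.
\]
Under the near-extremal hypothesis, both inequalities must be near-equalities. From the first, $\|g_h\|_{U^k(G)} \geq (1 - o(1))\|g_h\|_{L^{p_k}(G)}$ holds for $h$ in a set of near-full $\|g_h\|_{L^{p_k}}^{2^k}$-weighted measure, and the inductive hypothesis then expresses each such $g_h$ as $L^{p_k}$-close to $\mu(H_h)^{-1/p_k} 1_{H_h} e(P_h(\cdot - x_h))$ for some coset $H_h = x_h + H_h^0$ and polynomial $P_h$ of degree $\leq k-1$. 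Near-equality in the second (Young) inequality, invoked in its sharp form, supplies rigid information about $|T^h f|\cdot|f|$ viewed as a function of $(x,h)$, which after an averaging/gluing argument pins down a single coset $H = x_0 + H_0$ such that $|f|$ is $L^{p_{k+1}}$-close to $\mu(H)^{-1/p_{k+1}} 1_H$.

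\textbf{Phase extraction}: once $|f|$ is approximately $\mu(H)^{-1/p_{k+1}} 1_H$, write $f \approx \mu(H)^{-1/p_{k+1}} 1_H \cdot u$ with $u$ unimodular on $H$. By the scaling relations for Gowers norms under restriction to a coset, recalled in the paper just above the theorem, the restriction of $u$ to the normalisation $H_0$ is a near-extremiser of the $L^\infty$-based inequality \eqref{ukg} for $U^{k+1}(H_0)$. Theorem~\ref{infty-group}, applied on the compact abelian group $H_0$, then yields a polynomial $P : H_0 \to \R/\Z$ of degree $\leq k$ with $u \approx e(P(\cdot - x_0))$ in $L^1(H_0)$, which upgrades to the required $L^{p_{k+1}}(G)$-approximation of $f$. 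Part (1) of the theorem is the $\eps = 0$ specialisation of this analysis.

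\textbf{Main obstacle}: the hardest part is the gluing step within the inductive step, where one must pass from the family $\{H_h\}_h$ of \emph{approximate} cosets furnished by the inductive hypothesis to a single genuine coset $H$. The family satisfies an approximate 2-cocycle identity $g_{h+h'}(x) \cdot |T^h f(x)|^2 = g_h(x)\cdot T^h g_{h'}(x)$, and its approximate trivialisation requires combining the sharp stability of Young's inequality (to align the supports $H_h$ into a common $H$) with a cohomological averaging argument analogous to the one used in Theorems~\ref{infty-group}--\ref{infty-ergodic} (to align the local phases $P_h$ into the derivative of a single global polynomial on $H_0$). This is harder than in the $L^\infty$ setting because the support of $f$ is not known a priori and must be reconstructed from purely approximate data; rigidity of cosets under $L^{p_k}$ perturbation is what prevents the reconstruction from being ambiguous.
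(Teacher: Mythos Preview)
Your proposal arrives at the correct two-step skeleton (an inverse Young inequality to pin down $|f|$ as close to a scaled coset indicator, then Theorem~\ref{infty-group} on that coset to recover the phase), and in that sense it would succeed. But the induction on $k$ is superfluous, and the ``main obstacle'' you identify is a phantom.

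The paper's proof handles all $k \geq 2$ simultaneously in a few lines, with no induction. From the recursion \eqref{fk} and the critical inequality \eqref{ukg2} at level $k-1$, near-extremality of $f$ for $U^k$ immediately gives
\[
\int_G \|(T^h f)\bar f\|_{L^{p_{k-1}}(G)}^{2^{k-1}}\,d\mu(h) \geq 1 - o(1),
\]
which is exactly the hypothesis of the inverse Young inequality (Proposition~\ref{iyi}, due to Fournier) with $g = \bar f$ and the exponents \eqref{pqk}. That proposition directly outputs a \emph{single} coset $H = x_0 + H_0$ with $\bigl\||f| - \mu(H)^{-1/p_k} 1_H\bigr\|_{L^{p_k}(G)} = o(1)$; no family $\{H_h\}$ ever appears and there is nothing to glue. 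One then translates, renormalises Haar measure so that $\mu(H_0)=1$, and applies Theorem~\ref{infty-group} on the compact group $H_0$ to obtain the polynomial phase $P$. That is the entire argument.

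Your inductive hypothesis, applied to each $g_h = (T^h f)\bar f$, would produce cosets $H_h$ and phases $P_h$, but this information is never used: once inverse Young hands you $H$, you restrict $f$ to $H$ and invoke Theorem~\ref{infty-group} for the full $U^k(H_0)$ norm, not the inductive hypothesis for $U^{k-1}$. The cohomological gluing you anticipate in your ``main obstacle'' paragraph is already carried out inside the proof of Theorem~\ref{infty-group} and does not need to be redone at the $L^{p_k}$ level. In short: your phase-extraction paragraph is exactly right, and it is essentially the whole proof once you realise that Proposition~\ref{iyi} supplies the coset in one shot.
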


\begin{theorem}[$L^{p_k}$ near-extremisers on ergodic systems]\label{p-system}  Let $k \geq 2$ be an integer, let $X$ be an ergodic system, and let $f \in L^{p_k}(X)$ be such that $\|f\|_{L^{p_k}(X)} \leq 1$.  Let $\eps \geq 0$.
\begin{enumerate}
\item (Extremisers)  One has $\|f\|_{U^k(X)} = 1$ if and only if $f = \mu(H)^{-1/p_k} 1_H e(P)$ almost everywhere for some coset $H$ and polynomial $P: H \to \R/\Z$ of degree $\leq k-1$.
\item (Near-extremisers) If $\|f\|_{U^k(X)} \geq 1-\eps$, then there exists a coset $H$ and a polynomial $P: H \to \R/\Z$ of degree $\leq k-1$ such that
$$\|f-\mu(H)^{-1/p_k} 1_H e(P) \|_{L^{p_k}(X)} = o(1).$$
\end{enumerate}
\end{theorem}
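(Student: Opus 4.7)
The plan is to parallel the proof of Theorem \ref{p-group} for LCA groups, inducting on $k \geq 2$ via the recursive definition \eqref{usx}.

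For the base case $k = 2$: the $U^2$-seminorm factors through the Kronecker factor $Z_1$, which is isomorphic to an ergodic rotation on a compact abelian group $K$, so
$$\|f\|_{U^2(X)} = \|\E(f \mid Z_1)\|_{U^2(K)} \leq \|\E(f \mid Z_1)\|_{L^{4/3}(K)} \leq \|f\|_{L^{4/3}(X)},$$
the last inequality being conditional Jensen (valid since $p_2 = 4/3 \geq 1$). Near-saturation of this chain forces, via the uniform convexity of $L^{4/3}$, that $f$ is $o(1)$-close in $L^{4/3}$ to its conditional expectation onto $Z_1$; one then invokes Theorem \ref{p-group} on the compact abelian group $K$ and pulls the resulting coset and polynomial back to $X$ (cf.\ Remark \ref{coset}).

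For the inductive step, write
$$\|f\|_{U^{k+1}(X)}^{2^{k+1}} = \lim_{H\to\infty}\E_{h \in [H]}\|g_h\|_{U^k(X)}^{2^k}, \qquad g_h := (T^h f)\,\overline{f}.$$
Applying \eqref{ukp} to each $\|g_h\|_{U^k(X)}$ and then the ergodic Young-type inequality (Proposition \ref{young-ergodic}) bounds the whole expression by $\|f\|_{L^{p_{k+1}}(X)}^{2^{k+1}}$. Near-saturation of this chain yields (i) $\|g_h\|_{U^k(X)} \geq (1-o(1))\|g_h\|_{L^{p_k}(X)}$ for $h$ in a density-$(1-o(1))$ set, and (ii) that $\|g_h\|_{L^{p_k}(X)}$ is nearly constant in $h$. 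By the inductive hypothesis, for each such $h$ there is a coset $H_h \subseteq X$ and a polynomial $P_h: H_h \to \R/\Z$ of degree $\leq k-1$ with $g_h$ approximated by $\mu(H_h)^{-1/p_k}\,1_{H_h}\,e(P_h)$ in $L^{p_k}$.

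The principal difficulty is the cohomological extraction: showing that the data $(H_h, P_h)$ assemble into a single coset $H$ and a polynomial $P: H \to \R/\Z$ of degree $\leq k$ such that $P_h$ is the $h$-derivative of $P$ (on the appropriate support) and $H_h$ is a translate of $H$. Comparing the approximation for $g_{h+h'}$ with that for $T^{h'}(g_h) \cdot g_{h'}$ yields an approximate $2$-cocycle relation among the $P_h$ on the intersections of their supports; averaging over the auxiliary parameter $h'$, as in Theorem \ref{infty-ergodic}, turns this cocycle into a $2$-coboundary and produces the polynomial $P$ of one higher degree. The coset $H$ itself arises as the common translation-stabiliser of the family $\{H_h\}$, accessed through the Kronecker factor. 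The extra subtlety over Theorem \ref{infty-ergodic} is that the phases $P_h$ are only defined on the cosets $H_h$ rather than on all of $X$, so the cocycle averaging must be performed on the intersections $H_h \cap T^{h'}H_{h'-h}$, and one must verify these intersections still have enough density to support the averaging. The extremisers statement ($\epsilon = 0$) follows by running the same induction with all $o(1)$ errors set to zero.
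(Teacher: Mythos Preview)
Your approach differs substantially from the paper's, and the inductive step has a genuine gap.

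The paper does \emph{not} induct on $k$ for this theorem. Instead it proceeds in two clean steps. First, from near-equality in $\|f\|_{U^k(X)} \leq \|f\|_{L^{p_k}(X)}$ and the recursion \eqref{usx}, one deduces near-equality in the ergodic Young inequality (Proposition \ref{young-ergodic}) for $f$ against $\overline{f}$. The inverse Young inequality for ergodic systems (Proposition \ref{iyi-erg}, established just before this section via the Kronecker factor and Theorem \ref{p-group}) then gives directly that $|f|$ is $o(1)$-close in $L^{p_k}$ to $\mu(H)^{-1/p_k} 1_H$ for some coset $H$. Second, after modifying $f$ so that $|f| = \mu(H)^{-1/p_k} 1_H$ exactly, one passes to the normalisation of $H$ (an ergodic system in its own right) and applies the already-proven $L^\infty$ result, Theorem \ref{infty-ergodic}. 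No induction on $k$, no assembly of varying cosets, no cocycle argument --- all of that work was already done inside Theorem \ref{infty-ergodic}.

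Your inductive scheme founders precisely at the ``principal difficulty'' you flag. Applying the inductive hypothesis to each $g_h$ hands you a family of cosets $H_h$ of \emph{a priori} different indices, together with polynomials $P_h$ defined only on $H_h$. You assert that $H$ ``arises as the common translation-stabiliser of the family $\{H_h\}$, accessed through the Kronecker factor'', but this is exactly what needs proof: why should the $H_h$ all have the same index, and why should there be a single coset $H$ with $H_h$ essentially equal to $H \cap T^h H$? Answering this requires controlling $|f|$ itself, not just the products $|T^h f|\,|f|$ --- and that is precisely the content of Proposition \ref{iyi-erg}, which you never invoke. Once you grant that proposition, your induction becomes unnecessary: the coset is handed to you immediately and the problem collapses to the $L^\infty$ theorem. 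The cocycle averaging on the intersections $H_h \cap T^{h'} H_{h'-h}$ is not fleshed out, and making it rigorous would amount to reproving the inverse Young inequality inside the induction.

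Your base case $k=2$ is essentially correct and is in fact close in spirit to how the paper proves Proposition \ref{iyi-erg} itself (reduce to the Kronecker factor, then apply Theorem \ref{p-group}).
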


We prove these theorems in Sections \ref{p-group-sec},
\ref{p-system-sec} respectively.  The main idea is to use a
classification of the near-extremisers of Young's inequality, due to
Fournier~\cite{fournier}, to reduce matters to the $L^\infty$ theory discussed earlier.  (Indeed, the paper \cite{fournier} already implicitly contains the $k=2$ version of Theorem \ref{p-group}.)  To prove Theorem \ref{p-system} we also exploit the theory of the \emph{Kronecker factor} of an ergodic system.


\subsection{The Euclidean case}

Theorem \ref{p-group} has a corollary, which was essentially observed
by Fournier~\cite{fournier} (and by Russo~\cite{russo} when $k=2$): if a locally compact group $G$ has no compact open subgroups, then one has
$$ \|f\|_{U^k(G)} \leq c_{k,G} \|f\|_{L^{p_k}(G)}$$
for some constant $c_{k,G} < 1$ that is bounded away from $1$ uniformly in $G$.  In the case that $G$ is a Euclidean space $\R^n$ (with the usual Lebesgue measure, of course), we can compute the optimal value of $c_{k,G}$ precisely:

\begin{theorem}[Sharp critical inequality for $U^k(\R^n)$]\label{syg}  Let $k,n \geq 1$.  Then for any $f \in L^{p_k}(\R^n)$, one has
\begin{equation}\label{ud-sharp}
 \|f\|_{U^k(\R^n)} \leq C_k^n  \|f\|_{L^{p_k}(\R^n)}
 \end{equation}
where $C_k$ is the constant
\begin{equation}\label{C_d}
C_k := 2^{k/2^k} / (k+1)^{(k+1)/2^{k+1}}.
\end{equation}
This constant is best possible, and when $k \geq 2$, equality is attained if and only if $f$ takes the form
\begin{equation}\label{fphi}
 f(x) = c e^{-(x-x_0) \cdot M (x-x_0)} e( \phi(x) )
\end{equation}
for some $c \in \C$, $x_0 \in \R^n$, a positive-definite $n \times n$ matrix $M$, and a polynomial $\phi: \R^n \to \R/\Z$ of degree $\leq k-1$.
\end{theorem}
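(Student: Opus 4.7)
The plan is to prove Theorem \ref{syg} by induction on $k$, at each step passing from $\|f\|_{U^k(\R^n)}$ to $\|f\|_{U^{k+1}(\R^n)}$ through the recursive definition \eqref{fk} and the sharp Young's inequality of Brascamp--Lieb--Beckner (the Babenko--Beckner convolution inequality on $\R^n$). The base case $k=1$ is immediate: $p_1=1$, $C_1=1$, and $\|f\|_{U^1(\R^n)} = |\int f| \le \|f\|_{L^1(\R^n)}$.

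For the inductive step, assume \eqref{ud-sharp} holds for $k$. Then
\begin{equation*}
\|f\|_{U^{k+1}(\R^n)}^{2^{k+1}} = \int_{\R^n} \|(T^h f)\overline{f}\|_{U^k(\R^n)}^{2^k}\,dh \le C_k^{n\cdot 2^k} \int_{\R^n} \|(T^h f)\overline{f}\|_{L^{p_k}(\R^n)}^{2^k}\,dh.
\end{equation*}
I would then rewrite the inner factor as a convolution: with $g := |f|^{p_k}$ and $g^\vee(x) := g(-x)$, one has $\|(T^h f)\overline{f}\|_{L^{p_k}}^{p_k} = (g*g^\vee)(h)$, and since $2^k/p_k = k+1$, the integral equals $\|g*g^\vee\|_{L^{k+1}(\R^n)}^{k+1}$. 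The sharp Young inequality on $\R^n$ with exponents $r,r,k+1$ satisfying $2/r = 1 + 1/(k+1)$ (hence $r = 2(k+1)/(k+2) = p_{k+1}/p_k$) gives
\begin{equation*}
\|g*g^\vee\|_{L^{k+1}(\R^n)} \le (A_r^2 A_{(k+1)/k})^n \|g\|_{L^r(\R^n)}^2 = (A_r^2 A_{(k+1)/k})^n \|f\|_{L^{p_{k+1}}(\R^n)}^{2 p_k},
\end{equation*}
where $A_t := t^{1/(2t)}/(t')^{1/(2t')}$ is the Babenko--Beckner constant. Since $2 p_k (k+1) = 2^{k+1}$, collecting the exponents yields $\|f\|_{U^{k+1}(\R^n)} \le C_{k+1}^n \|f\|_{L^{p_{k+1}}(\R^n)}$ provided one verifies the identity $C_{k+1} = C_k^{1/2} (A_r^2 A_{(k+1)/k})^{(k+1)/2^{k+1}}$; this is a routine (if somewhat tedious) algebraic check from the definition \eqref{C_d} and the explicit formula for $A_t$.

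For the equality characterization, I would combine the classical Brascamp--Lieb equality analysis for sharp Young with the polynomial phase invariance \eqref{pepg}. Sharpness of \eqref{ud-sharp} is witnessed by any Gaussian $f(x) = e^{-(x-x_0)\cdot M(x-x_0)}$, for which the computation above becomes an equality at every stage; polynomial phase invariance \eqref{pepg} then immediately shows that $f$ of the form \eqref{fphi} also achieves equality. Conversely, if equality holds, then equality must also hold at the Young step in the induction, which by Brascamp--Lieb forces $g = |f|^{p_k}$ (and thus $|f|$) to be a Gaussian $c\, e^{-(x-x_0)\cdot M(x-x_0)}$. Equality must also propagate through the induction hypothesis: $(T^h f)\overline{f}$ achieves equality in the $U^k$-inequality for a.e.\ $h$, and by the inductive characterization this function has the form \eqref{fphi} with degree $\le k-2$ phase. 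Writing $f = |f|\,e(\psi)$, this translates into the condition that $\psi(x) - \psi(x-h)$ is (a.e.) a polynomial of degree $\le k-2$ for a.e.\ $h$, which by a standard Fubini/continuity argument forces $\psi$ itself to be a polynomial of degree $\le k-1$.

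The principal obstacle is the last step: turning the family of inductive equality constraints into the clean statement that the phase $\psi$ is a genuine polynomial of degree $\le k-1$ everywhere, rather than merely satisfying a derivative identity almost everywhere along every line. This requires handling the measurability/continuity issues on $\R^n$ (using the regularity of $|f|$ once it is known to be Gaussian, and the polynomial invariance discussion from Section 1.2), and also keeping track of the sub-extremal cases in Brascamp--Lieb where both factors in the convolution must be matched Gaussians — it is here that one fixes $M$ positive-definite and $x_0$ as a genuine translation parameter rather than picking up an additional degenerate family. The constant-matching identity and the Babenko--Beckner formula computation are purely algebraic and should go through without incident.
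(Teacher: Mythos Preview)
Your proof of the inequality \eqref{ud-sharp} is essentially the paper's argument: induction on $k$ via the recursion \eqref{fk} combined with the sharp Young inequality of Beckner and Brascamp--Lieb. The paper packages Young's inequality in the mixed-norm form of Theorem~\ref{young-beck}, whereas you substitute $g=|f|^{p_k}$ first and apply the standard convolution form; these differ only by the change of variables noted in the Remark after Theorem~\ref{young-beck}. The constant identity you need is exactly the relation $C_k=(A_{2k/(k+1)}^2/A_k)^{k/2^k}C_{k-1}^{1/2}$ that the paper verifies. The forward direction of the equality case (Gaussians times polynomial phases extremise) is also the same.

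For the \emph{converse} equality characterisation, the paper's route is more direct than your inductive one and sidesteps the obstacle you flag. Once Brascamp--Lieb forces $|f|$ to be a Gaussian, the paper simply notes that $\|f\|_{U^k(\R^n)}\le\||f|\|_{U^k(\R^n)}$ (immediate from the explicit integral \eqref{fug}), so equality in \eqref{ud-sharp} for $f$ forces $\|f\|_{U^k(\R^n)}=\||f|\|_{U^k(\R^n)}$. Writing $f=|f|\,e(P)$ and expanding both sides via \eqref{fug}, the integrand for $|f|$ is the pointwise modulus of the integrand for $f$; since the Gaussian $|f|$ is nowhere zero, equality of the integrals forces the multiplicative phase $e\bigl(\sum_{\omega}(-1)^{|\omega|}P(x+\omega_1h_1+\cdots+\omega_kh_k)\bigr)$ to equal $1$ almost everywhere, i.e.\ $\Delta_{h_1}\cdots\Delta_{h_k}P=0$ a.e. That is already the statement that $P$ has degree $\le k-1$, with no induction, no propagation of equality through the $U^{k-1}$ step, and no separate treatment of the base case (where the $U^1$ extremisers are not of the form \eqref{fphi}). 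Your inductive approach can be made to work, but one small correction: by the $U^k$ characterisation the phase of $(T^hf)\overline f$ has degree $\le k-1$, not $\le k-2$; this still integrates to the desired degree bound on $\psi$.
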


The first few values of $C_k$ are
\begin{align*}
C_1 &= \frac{2^{1/2}}{2^{2/4}} = 1\\
C_2 &= \frac{2^{2/4}}{3^{3/8}} \approx 0.9367\\
C_3 &= \frac{2^{3/8}}{4^{4/16}} = 2^{-1/8} \approx 0.9170\\
C_4 &= \frac{2^{4/16}}{5^{5/32}} \approx 0.9248.
\end{align*}

We prove Theorem \ref{syg} in Section \ref{syg-sec}.  Our main tool is
the sharp Young inequality due to Beckner \cite{beckner} and Brascamp and Lieb \cite{brascamp}, and its converse.  It is quite likely that one could also establish near-extremiser results analogous to those appearing previously in this introduction, but we will not attempt to do so here.  (A starting point would be to first establish near-extremiser results for the sharp Young inequality, which could perhaps be deduced using the machinery from \cite{barthe}, \cite{barthe2}.)

\subsection{Threshold behaviour for the $U^3$ norm}

It is natural to ask how small the quantity $1-\eps$ appearing in results such as Theorem \ref{p-group} or Theorem \ref{p-system} can be.  For $k=1,2$ we can lower $1-\eps$ all the way to zero:

\begin{theorem}[Inverse theorem for $k=1,2$]\label{th12} Let $k=1,2$ and let $\eps > 0$.
\begin{itemize}
\item If $G$ is a compact abelian group (with normalised Haar measure) and $f \in L^\infty(G)$ is such that $\|f\|_{L^\infty(G)} \leq 1$ and $\|f\|_{U^k(G)} \geq\eps$, then there exists a polynomial $P: G \to \R/\Z$ of degree at most $k-1$ such that $|\langle f, e(P) \rangle_{L^2(G)}| \geq c(k,\eps)$, where $c(k,\eps) > 0$ is a quantity that depends only on $k$ and $\eps$.
\item If $N$ is a positive integer and $f \in L^\infty([N])$ is such that $\|f\|_{L^\infty([N])} \leq 1$ and $\|f\|_{U^k([N])} \geq\eps$, then there exists a polynomial $P: [N] \to \R/\Z$ of degree at most $k-1$ such that $|\langle f, e(P) \rangle_{L^2([N])}| \geq c(k,\eps)$, where $c(k,\eps) > 0$ is a quantity that depends only on $k$ and $\eps$.
\item  If $X$ is an ergodic system and $f \in L^{p_k}(X)$ is such that $\|f\|_{L^{p_k}(G)} \leq 1$ and $\|f\|_{U^k(G)} > 0$, then there exists a polynomial $P: X \to \R/\Z$ of degree at most $k-1$ such that $|\langle f, e(P) \rangle_{L^2(X)}| > 0$.
\end{itemize}
\end{theorem}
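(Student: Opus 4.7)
The case $k=1$ is essentially tautological in all three settings: from \eqref{f1}, \eqref{us1}, and the cancellation of the normalising factor $\|1_{[N]}\|_{U^1(\Z/\tilde N\Z)}$ in \eqref{util}, one has $\|f\|_{U^1} = |\langle f, 1\rangle_{L^2}|$, so the conclusion holds with $c(1,\eps) = \eps$ and the trivial polynomial $P\equiv 0$ (of degree $\leq 0$).

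For $k=2$ on a compact abelian group $G$, the plan is to argue via Fourier analysis. Since $G$ is compact the dual $\hat G$ is discrete, so \eqref{fug2} and Plancherel give
\begin{equation*}
\eps^4 \;\leq\; \|f\|_{U^2(G)}^4 \;=\; \sum_{\chi\in \hat G}|\hat f(\chi)|^4 \;\leq\; \|\hat f\|_{\ell^\infty(\hat G)}^2\,\|\hat f\|_{\ell^2(\hat G)}^2 \;\leq\; \|\hat f\|_{\ell^\infty(\hat G)}^2,
\end{equation*}
using $\|\hat f\|_{\ell^2(\hat G)} = \|f\|_{L^2(G)} \leq \|f\|_{L^\infty(G)}\leq 1$. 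Hence some character $\chi$ satisfies $|\langle f,\chi\rangle_{L^2(G)}|\geq \eps^2$; since every continuous character of $G$ is of the form $e(P)$ for a continuous homomorphism $P: G\to\R/\Z$ (equivalently, a polynomial of degree $\leq 1$), this gives $c(2,\eps) = \eps^2$. The $[N]$ case reduces to $G = \Z/\tilde N\Z$ via \eqref{util} and the same Fourier calculation; the bookkeeping required is a lower bound $\|1_{[N]}\|_{U^2(\Z/\tilde N\Z)} \gtrsim (N/\tilde N)^{3/4}$ obtained by inspecting the low-frequency Fourier coefficients of $1_{[N]}$, which converts the bound on $\hat{\tilde f}$ into the desired $L^2([N])$ correlation with a linear phase $x\mapsto e(\ell x/\tilde N)$.

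For the $k=2$ ergodic case I would use the well-known fact, noted after \eqref{ukp2} and established in \cite{host-kra}, that $\|f\|_{U^2(X)}$ vanishes precisely when $\E(f\mid Z_1)=0$, where $Z_1$ is the Kronecker factor of $X$. The Hilbert space $L^2(Z_1)$ is spanned by eigenfunctions of $T$, and by the example preceding the theorem statement every such eigenfunction is a constant multiple of $e(P)$ for some polynomial $P: X\to\R/\Z$ of degree $\leq 1$. Thus $\|f\|_{U^2(X)}>0$ forces $\E(f\mid Z_1)\neq 0$; expanding this nonzero projection in an orthonormal basis of eigenfunctions produces some $e(P)$ with $|\langle f, e(P)\rangle_{L^2(X)}|>0$, the pairing being well defined since $f\in L^{p_2}(X)\subset L^1(X)$ and $e(P)$ is bounded.

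The argument is short in all three settings because at level $k\leq 2$ the relevant structure is \emph{linear}: Fourier-analytic on groups and Kronecker on systems. There is no genuine obstacle; the only mild nuisance is the normalisation bookkeeping on $[N]$. The paper signals (by way of the forthcoming $U^3$ threshold phenomena) that no analogous fully qualitative inverse theorem of this strength can persist for $k\geq 3$, so this proof is not expected to generalise without substantial additional input.
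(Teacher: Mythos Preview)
Your proposal is correct and follows essentially the same route as the paper: the $k=1$ case by inspection, the $k=2$ group and interval cases via \eqref{fug2} and the $\ell^4$--$\ell^2$--$\ell^\infty$ interpolation on $\hat G$ (yielding the same bound $c(2,\eps)=\eps^2$), and the ergodic case via the Host--Kra characterisation of the $U^2$ seminorm in terms of the Kronecker factor $Z_1$. The only cosmetic difference is that the paper phrases the ergodic step as a contrapositive (orthogonality to all eigenfunctions forces $\|f\|_{U^2(X)}=0$), whereas you argue directly that $\E(f\mid Z_1)\neq 0$ and expand in eigenfunctions; these are the same argument.
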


\begin{proof}  The case $k=1$ is easily verified by inspection, so we establish the $k=2$ case only.  In the group case, we can use \eqref{fug2} and Plancherel's theorem to conclude that
$$ \| \hat f \|_{\ell^2(\hat G)} \leq 1$$
and
$$ \| \hat f \|_{\ell^4(\hat G)} \geq \eps$$
which implies that there exists $\xi \in \hat G$ such that $|\hat f(\xi)| \geq \eps^2$, and the claim follows.

The interval case $[N]$ follows easily from the group case with $G = \Z/\tilde N\Z$ for a suitable $\tilde N$.  In the ergodic system case, it is possible to perform a similar argument using the spectral decomposition of the shift $T$, but we will instead use the (closely related) theory of the \emph{Kronecker factor} from Host and Kra \cite{host-kra}.  It suffices to show that if $f \in L^{4/3}(X)$ is orthogonal to all eigenfunctions of $X$, then it has a $U^2(G)$ norm of zero.  Let ${\mathcal Z}_1$ be the sub-$\sigma$-algebra of ${\mathcal X}$ generated by the eigenfunctions of $X$ (or equivalently, by the pure point spectrum of $T$), then $f$ is orthogonal to ${\mathcal Z}_1$.  The claim then follows\footnote{Strictly speaking, the lemma cited only applies when $f$ lies in $L^\infty(X)$, but it can be easily extended to $f$ in $L^{4/3}(X)$ by a limiting argument using \eqref{ukp}.} from \cite[Lemma 4.3]{host-kra}.
\end{proof}

However, the situation changes when $k=3$: the parameter $\eps$ in
Theorem \ref{th12} can no longer be lowered all the way to zero.  In
the group case (and specifically, for cyclic groups $G=\Z/N\Z$ with
$N$ large) and in the case of the interval $[N]$, this was observed by
Gowers \cite{gowers-4aps}; the analogous observation for ergodic
systems is implicit in the work of Furstenberg and Weiss \cite{furst-legacy}, \cite{fw} (in the closely related context of \emph{multiple recurrence}), and more explicitly in the work of Host and Kra \cite{host-kra}.  On the other hand, Theorem \ref{th12} for arbitrarily small $\eps$ can be recovered in the $k=3$ case for some groups $G$, such as vector spaces $G = \mathbb{F}_p^n$ over a fixed characteristic $p$; see \cite{gt:inverse-u3}, \cite{sam}.

Our final main results are to compute the precise threshold for which Theorem \ref{th12} continues to hold in the $k=3$ case, provided that the group $G$ is cyclic, or that the system $X$ is totally ergodic.  More precisely, we have

\begin{theorem}[Sharp $U^3$ threshold for systems]\label{u3-system}  If $X$ is a totally ergodic system (thus $T^h$ is ergodic for every non-zero $h$) and $f \in L^2(X)$ is such that $\|f\|_{L^2(X)} \leq 1$ and $\|f\|_{U^3(X)} > 2^{-1/8}$, then there exists a polynomial $P: X \to \R/\Z$ of degree at most $2$ such that $|\langle f, e(P) \rangle_{L^2(X)}| > 0$.  Furthermore, the constant $2^{-1/8}$ is best possible.
\end{theorem}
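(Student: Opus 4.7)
My plan is to reduce to totally ergodic $2$-step nilsystems and then establish a sharp inequality that transfers the Euclidean constant $C_3 = 2^{-1/8}$ from Theorem \ref{syg} to the ergodic setting. Since the Host--Kra theory tells us that the $U^3$ seminorm factors through the $2$-step characteristic factor $Z_2$, that is $\|f\|_{U^3(X)} = \|\E(f|Z_2)\|_{U^3(X)}$, I may replace $f$ by $\E(f|Z_2) \in L^2(Z_2)$; the Host--Kra structure theorem then lets me approximate $f$ by functions on finite-dimensional $2$-step nilsystems $X = G/\Gamma$. Total ergodicity forces the Kronecker factor $Z_1 = G/[G,G]\Gamma$ to be a connected compact abelian group, so $X$ is an extension of $Z_1$ by the compact abelian fiber $K = [G,G]/([G,G]\cap \Gamma)$ through a measurable cocycle $\sigma\colon Z_1 \to K$.

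I would then Fourier-decompose $f$ along this central fiber,
$$ f(x,y) = \sum_{\chi \in \hat K} F_\chi(x)\,\chi(y), \qquad \|f\|_{L^2(X)}^2 = \sum_\chi \|F_\chi\|_{L^2(Z_1)}^2. $$
A Conze--Lesigne-type dichotomy coming from the Host--Kra theory classifies each summand: either $\chi\circ\sigma$ is cohomologous to a linear character of $Z_1$, in which case $F_\chi(x)\chi(y)$ is (modulo a coboundary) a polynomial phase $e(P)$ of degree $\le 2$; or it is not, in which case $F_\chi(x)\chi(y)$ is automatically orthogonal to every such polynomial phase.

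The crucial technical step is to prove the sharp inequality
$$ \|f\|_{U^3(X)}^8 \le \tfrac{1}{2}\,\|f\|_{L^2(X)}^8 $$
for every $f \in L^2(X)$ with $\|f\|_2 \le 1$ that is orthogonal to every polynomial phase $e(P)$ of degree $\le 2$; the contrapositive, upon extracting eighth roots, is precisely the conclusion of the theorem. I would prove this by substituting the fiber Fourier expansion into the eight-fold Gowers integral defining $\|f\|_{U^3}^8$, collapsing the $h_1,h_2,h_3$ averages via total ergodicity to additive-quadruple constraints on the fiber characters $\chi$, and using the polynomial-phase orthogonality hypothesis to kill the ``Abramov'' diagonal contributions that would otherwise dominate. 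The factor $\tfrac{1}{2}$ then emerges from the two symmetric pairings available in each surviving additive quadruple, as already visible in the Kronecker identity
$$ \|e(P_1 x) + e(P_2 x)\|_{U^3}^8 = \tfrac{1}{2}\,\|e(P_1 x)+e(P_2 x)\|_{L^2}^8 $$
for distinct integers $P_1,P_2$ on a totally ergodic circle rotation. For the sharpness of $2^{-1/8}$, I would construct a sequence of totally ergodic $2$-step nilsystems whose cocycles realize non-trivial Conze--Lesigne obstructions and place on each a non-Abramov analogue of the above Kronecker extremizer, producing $f_n$ orthogonal to every polynomial phase with $\|f_n\|_{U^3(X_n)}/\|f_n\|_2 \to 2^{-1/8}$.

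The principal obstacle is the sharp inequality just stated: carrying out the eight-fold combinatorial bookkeeping precisely enough to extract the constant $\tfrac{1}{2}$, and verifying that total ergodicity suppresses the lower-order arithmetic resonances which would otherwise contaminate the bound. This is the ergodic analogue of the sharp Young's inequality underlying Theorem \ref{syg}, and I expect the sharpness, via a limiting/transference argument, to mirror the Gaussian extremizer of the Euclidean case.
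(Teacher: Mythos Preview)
Your high-level reduction is correct and matches the paper: pass to the Host--Kra factor $Z_2$, approximate by $2$-step nilsystems $G/\Gamma$, and Fourier-decompose along the central torus $G_2/\Gamma_2$. The paper also uses total ergodicity to make $G$ connected and then reduces (by quotienting out $\xi^\perp$) to an \emph{elementary} Heisenberg-type nilmanifold.

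The genuine gap is in your ``crucial technical step''. Two problems. First, the $U^3$ norm actually \emph{diagonalises} exactly on the fiber decomposition: the paper shows $\|f\|_{U^3}^8 = \sum_{\xi \in \hat G_2} \|f_\xi\|_{U^3}^8$, so there are no surviving ``additive quadruple constraints on the fiber characters'' to bookkeep --- the problem immediately reduces to bounding $\|f_\xi\|_{U^3}/\|f_\xi\|_{L^2}$ for a \emph{single} nontrivial $\xi$. Second, and more importantly, the constant $2^{-1/8}$ is not combinatorial in origin. Your motivating identity $\|e(P_1 x)+e(P_2 x)\|_{U^3}^8 = \tfrac12 \|e(P_1 x)+e(P_2 x)\|_{L^2}^8$ concerns a function lying \emph{inside} the Abramov factor (each $e(P_j x)$ is a degree $\le 1$ polynomial phase), so it cannot illustrate the inequality you need, which is for $f$ orthogonal to that factor. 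What actually happens for a nontrivial $\xi$ is that $f_\xi$ lives on a Heisenberg nilmanifold $G'/\Gamma'$ with $G' = \R^{2d}\times \R/\Z$; the paper \emph{unfolds this nilmanifold in explicit Mal'cev coordinates}, turning $\|f_\xi\|_{U^3(G/\Gamma)}$ into a genuine Euclidean $U^3(\R^d)$ expression, and then invokes Theorem~\ref{syg} directly. So the constant $2^{-1/8}$ comes from Beckner's sharp Young inequality (Gaussian extremisers), transported to the nilmanifold via this coordinate unfolding --- not from any pairing count.

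For sharpness the paper takes this unfolding in reverse: on the $3$-dimensional Heisenberg nilmanifold one sets $f(x_1,x_2,z) = e(z)\sum_k F_0(x_1+k)e(kx_2)$ (a Zak-transform/theta-type construction) with $F_0$ a Gaussian on $\R$, and the identities $\|f\|_{L^2(G/\Gamma)}=\|F_0\|_{L^2(\R)}$, $\|f\|_{U^3(G/\Gamma)}=\|F_0\|_{U^3(\R)}$ transfer the Euclidean extremiser exactly. Your plan to ``construct a sequence of nilsystems realising Conze--Lesigne obstructions'' is aiming at the right target but misses this explicit mechanism.
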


\begin{theorem}[$U^3$ threshold for intervals]\label{u3-n}  If $N$ is an integer and $f \in L^\infty([N])$ is such that $\|f\|_{L^\infty([N])} \leq 1$ and $\|f\|_{U^3([N])} \geq 2^{-1/8}+\eta$ for some $\eta>0$, then there exists a polynomial $P: [N] \to \R/\Z$ of degree at most $2$ such that $|\langle f, e(P) \rangle_{L^2([N])}| > c(\eta)$ for some $c(\eta) > 0$.
\end{theorem}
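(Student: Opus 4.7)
The plan is to deduce Theorem \ref{u3-n} from its ergodic counterpart, Theorem \ref{u3-system}, via a correspondence principle that converts the finite-interval setting into a totally ergodic $\Z$-system in the limit. I would argue by contradiction: assume there exist $\eta > 0$, $N_i \to \infty$, and $f_i \in L^\infty([N_i])$ with $\|f_i\|_{L^\infty([N_i])} \leq 1$ and $\|f_i\|_{U^3([N_i])} \geq 2^{-1/8}+\eta$, yet $\sup_P |\langle f_i, e(P)\rangle_{L^2([N_i])}| \to 0$, the supremum being over polynomials $P: [N_i] \to \R/\Z$ of degree at most $2$.

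The first step embeds each interval in a large cyclic ambient group. Using Bertrand's postulate, choose primes $\tilde N_i \in (8 N_i, 16 N_i)$ and let $\tilde f_i \in L^\infty(\Z/\tilde N_i\Z)$ be the zero extension of $f_i$. By \eqref{util}, together with a direct Riemann-sum computation of $\|1_{[N_i]}\|_{U^3(\Z/\tilde N_i\Z)}$ (which tends to a positive constant depending only on the limiting ratio of $N_i$ to $\tilde N_i$), the hypothesis on $f_i$ becomes a quantitative lower bound for $\|\tilde f_i\|_{U^3(\Z/\tilde N_i\Z)}$. I would then pass to an ergodic limit via an ultraproduct/Loeb-space construction along a non-principal ultrafilter $\mathcal{U}$ on $\bbN$, obtaining a standard probability space $(X, \mathcal{X}, \mu, T_\infty)$ with $\Z$-action and a function $F \in L^\infty(X)$ with $\|F\|_\infty \leq 1$ and $\|F\|_{U^3(X)} \geq 2^{-1/8}+\eta$. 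Because each $\tilde N_i$ is prime, for each fixed $h \neq 0$ the shift $T^h$ is ergodic on $\Z/\tilde N_i\Z$ for all large $i$, and this total ergodicity passes to the limit. Theorem \ref{u3-system} then produces a polynomial $Q: X \to \R/\Z$ of degree $\leq 2$ with $|\langle F, e(Q)\rangle_{L^2(X)}| \geq c > 0$.

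The remaining step is to transfer this correlation back to the finite models: by the ultraproduct transfer principle, $Q$ arises as the ultralimit of polynomial phases $Q_i: \Z/\tilde N_i\Z \to \R/\Z$ of degree $\leq 2$ (the degree-$2$ polynomial identity $\Delta_{h_1}\Delta_{h_2}\Delta_{h_3} Q_i \equiv 0$ being an internal identity in each fiber), whose restrictions $P_i$ to $[N_i]$ would then satisfy $|\langle f_i, e(P_i)\rangle_{L^2([N_i])}| \geq c/2$ for $\mathcal{U}$-almost every $i$, contradicting our assumption. The main obstacle is precisely this transfer step: one must verify that the polynomial $Q$ obtained from Theorem \ref{u3-system} is indeed realisable as an ultralimit of classical polynomials on the finite cyclic fibers, rather than as a ``genuine $2$-step nilpotent'' element of the Host-Kra characteristic factor $Z_2$ that is not directly approximable by polynomial phases. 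This is where total ergodicity of the limit together with the explicit Conze-Lesigne description of $Z_2$ as an iterated abelian group extension becomes essential: its polynomial coordinates descend coherently through the ultraproduct back to the finite cyclic approximations.
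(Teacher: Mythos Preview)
Your route is genuinely different from the paper's. The paper does \emph{not} deduce Theorem~\ref{u3-n} from Theorem~\ref{u3-system} via a correspondence principle; instead it works entirely in the finitary setting, applying the arithmetic regularity lemma of Green--Tao to write $f = f_\nil + f_\sml + f_\unf$, and then computing $\|f_\nil\|_{U^3([N])}$ directly via the counting lemma as an integral over $\HK^3(G,G_{(2)})/\HK^3(\Gamma,\Gamma_{(2)})$. The orthogonality hypothesis \eqref{fap} forces a cancellation condition on the nilmanifold (vanishing $G_2$-average), after which a Fourier decomposition in the vertical torus reduces to the elementary connected case and hence to Theorem~\ref{syg}. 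This is parallel to, but logically independent of, the ergodic argument in Section~\ref{u3-sys}.

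Your proposal has two real gaps. First, the claim that $\|F\|_{U^3(X)} \geq 2^{-1/8}+\eta$ with $\|F\|_{L^2(X)} \leq 1$ is not justified. Zero-extending $f_i$ to $\tilde f_i$ on $\Z/\tilde N_i\Z$ gives $\|\tilde f_i\|_{U^3} = \|f_i\|_{U^3([N_i])}\cdot \|1_{[N_i]}\|_{U^3(\Z/\tilde N_i\Z)}$ and $\|\tilde f_i\|_{L^2} \leq \|1_{[N_i]}\|_{L^2(\Z/\tilde N_i\Z)}$, so the ratio $\|\tilde f_i\|_{U^3}/\|\tilde f_i\|_{L^2}$ picks up the factor $\|1_{[N_i]}\|_{U^3}/\|1_{[N_i]}\|_{L^2}$, which is \emph{strictly} less than $1$ when $N_i/\tilde N_i$ is bounded away from $1$ (indeed $[N_i]$ is not a coset of the prime cyclic group, so Theorem~\ref{p-group} rules out equality). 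With $\tilde N_i \in (8N_i,16N_i)$ this loss is a fixed constant $<1$, and can push the limiting ratio below $2^{-1/8}$, so Theorem~\ref{u3-system} cannot be invoked. You cannot repair this by taking $\tilde N_i/N_i \to 1$, since the definition \eqref{util} requires $\tilde N > 8N$.

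Second, and independently, the transfer step you flag as the ``main obstacle'' is not actually carried out. A degree-$2$ polynomial $Q: X \to \R/\Z$ on the Loeb space is a Loeb-measurable function satisfying $\Delta_{h_1}\Delta_{h_2}\Delta_{h_3} Q = 0$ for \emph{standard} $h_j \in \Z$; there is no automatic mechanism making $Q$ internal, nor making it an ultralimit of genuine quadratic polynomials on $\Z/\tilde N_i\Z$. Your appeal to ``the explicit Conze--Lesigne description of $Z_2$'' is a gesture, not an argument: you would need to identify the Abramov factor of the Loeb system concretely and show that every generator descends. This is essentially the content of an inverse theorem, and proving it here would be at least as hard as the paper's direct nilmanifold computation. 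The paper sidesteps both issues by never leaving the finitary world: the regularity lemma already hands you a concrete nilsequence on $[N]$, and the counting lemma computes its $U^3$ norm exactly, with no constants lost and nothing to transfer back.
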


\begin{theorem}[$U^3$ threshold for cyclic groups]\label{u3-cyclic}  If $N$ is an integer and $f \in L^\infty(\Z/N\Z)$ is such that $\|f\|_{L^\infty(\Z/N\Z)} \leq 1$ and $\|f\|_{U^3(\Z/N\Z)} \geq 2^{-1/8}+\eta$ for some $\eta>0$, then there exists a positive integer $1 \leq q \leq C(\eta)$ and a polynomial $P: \Z/qN\Z \to \R/\Z$ of degree at most $2$ such that $|\langle f, e(P) \rangle_{L^2(\Z/qN\Z)}| > c(\eta)$ for some $C(\eta), c(\eta) > 0$, where we lift $f$ from $\Z/N\Z$ up to $\Z/qN\Z$ in the obvious manner.
\end{theorem}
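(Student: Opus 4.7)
The plan is to reduce Theorem \ref{u3-cyclic} to Theorem \ref{u3-system} via an ergodic correspondence principle. The extra parameter $q$ arises because $\Z/N\Z$, viewed as a measure-preserving system under translation by $1$, is never totally ergodic for $N \geq 2$: its cosets correspond to divisors of $N$, so a polynomial correlation on a coset of index $m$ in a limiting system corresponds to a polynomial on the cyclic cover $\Z/mN\Z$ rather than on $\Z/N\Z$ itself.

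First I would argue by contradiction. Suppose the conclusion fails: fix $\eta > 0$ and take sequences $N_i \to \infty$, $f_i \in L^\infty(\Z/N_i\Z)$ with $\|f_i\|_\infty \leq 1$, $\|f_i\|_{U^3(\Z/N_i\Z)} \geq 2^{-1/8}+\eta$, yet $|\langle f_i, e(P)\rangle_{L^2(\Z/qN_i\Z)}| \leq 1/i$ for every $q \leq i$ and every polynomial $P: \Z/qN_i\Z \to \R/\Z$ of degree at most $2$. Form an ultraproduct of the cyclic groups (equipped with normalised counting measure and shift by $1$) along a nonprincipal ultrafilter; the Loeb construction produces a standard measure-preserving system $X = (X, \mathcal X, \mu, T)$, and the ultralimit $f$ of the $f_i$ satisfies $\|f\|_\infty \leq 1$ and $\|f\|_{U^3(X)} \geq 2^{-1/8}+\eta$, since the $U^3$ seminorm is an average of a uniformly bounded integrand (cf.~\eqref{fug}) and so passes to the ultralimit.

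Next I would pass to an ergodic component of $X$, preserving the $U^3$ lower bound on some fiber via the non-ergodic decomposition noted in the Remark after Definition \ref{semidef}. If this ergodic component is totally ergodic, Theorem \ref{u3-system} directly yields a polynomial correlation, which on transferring back through the ultraproduct contradicts the counterexample setup with $q=1$. Otherwise the ergodic component admits a coset $H$ of some finite index $m$; the coset $H$ corresponds in the ultraproduct to the subgroup $m\Z/N_i\Z$ of $\Z/N_i\Z$ for almost every $i$. Restricting $f$ to $H$, the $U^3(H)/L^2(H)$ ratio still exceeds $2^{-1/8}$ by the scaling relations preceding Theorem \ref{p-group}; iterating this restriction reaches a totally ergodic subsystem on a coset of total index $m$, and Theorem \ref{u3-system} applied there supplies a polynomial $P$ of degree $\leq 2$ correlating with $f\downharpoonright_H$. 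Lifting $P$ through the correspondence yields a polynomial on $\Z/mN_i\Z$ correlating with $f_i$ in the required sense, contradicting the counterexample condition with $q=m$.

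The main obstacle is showing that the index $m$ of the coset produced by the iteration is bounded by a function of $\eta$ alone, so that $q \leq C(\eta)$ as claimed. This quantitative control should follow from the sharp Euclidean extremiser theorem (Theorem \ref{syg}, whose $k=3$ constant is precisely $C_3 = 2^{-1/8}$): each coset refinement effectively corresponds in the limit to zooming into a Euclidean scale, and the $U^3/L^2$ ratio is constrained by $C_3$ in that limit, so only finitely many refinements---boundedly many in $\eta$---are possible before violating the $U^3 \geq 2^{-1/8}+\eta$ hypothesis. A secondary technical difficulty is the careful setup of the ultraproduct/Loeb correspondence for cyclic groups, ensuring that it respects the $U^3$ seminorm and translates cosets of the Loeb system back to subgroups of $\Z/N_i\Z$ (with polynomials on them to polynomials on $\Z/qN_i\Z$); this is standard but demands care.
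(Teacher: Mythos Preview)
Your approach is genuinely different from the paper's, and it has a real gap at the point you yourself flag as ``the main obstacle.''

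The paper does \emph{not} go through Theorem~\ref{u3-system} or any ultraproduct. Instead it unwraps $f:\Z/N\Z\to\C$ to $f^{(M)}:[MN]\to\C$ by $f^{(M)}(n):=f(n\bmod N)$, checks that $\|f^{(M)}\|_{U^3([MN])}^8=\|f\|_{U^3(\Z/N\Z)}^8+O(1/M)$, and applies Theorem~\ref{u3-n} to obtain a real quadratic $P_M(n)=\alpha_M n^2+\beta_M n$ correlating with $f^{(M)}$. The remaining work is purely Diophantine: a Weyl-type differencing plus Vinogradov's lemma force $\alpha_M$ to lie within $O_\eta(1/N^2M)$ of a rational $b/(qN)$ with $q\le C(\eta)$, and a final pigeonhole and Fourier-approximation step on the linear phase produce a genuine polynomial on $\Z/qN\Z$. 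The bound $q\le C(\eta)$ comes straight out of the Vinogradov constants, which depend only on $c(\eta)$.

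Your plan breaks at the iteration to total ergodicity. The scaling relations you cite say that when $f$ is \emph{supported on a single coset} $H$, the ratio $\|f\|_{U^3}/\|f\|_{L^2}$ is preserved upon restriction; they do not say the ratio drops when you restrict a general $f$. There is no decreasing quantity to force termination, and indeed the restricted system $(H,T^m)$ is again of the same type and need not be totally ergodic either (for the cyclic groups themselves, restriction to a coset of index $m\mid N$ gives another cyclic system $\Z/(N/m)\Z$, which is still never totally ergodic). Your sentence ``each coset refinement effectively corresponds in the limit to zooming into a Euclidean scale'' is not an argument: coset restriction is a discrete, measure-preserving operation, not a dilation, and Theorem~\ref{syg} gives you nothing here. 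The paper's Remark~\ref{total} explicitly warns that the $2^{-1/8}$ threshold in Theorem~\ref{u3-system} may fail without total ergodicity, so you cannot simply drop that hypothesis either. A secondary issue is that cosets of index $m$ in an ergodic component of the Loeb system need not correspond to the subgroup $m\Z/N_i\Z$ of $\Z/N_i\Z$ (for one thing, $m$ need not divide $N_i$); making that correspondence precise would itself require the kind of quantitative control on $m$ that you have not supplied.
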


We prove these theorems in Sections \ref{u3-sys}, \ref{u3-n-sec}, \ref{u3-cyclic-sec} respectively.  The constant $2^{-1/8}$ is, not coincidentally, the constant $C_3$ that appears in Theorem \ref{syg}, and indeed we will use Theorem \ref{syg} as a key tool in establishing the above results.  We will also rely crucially on the inverse theorems of Host and Kra \cite{host-kra} (in the ergodic case) and of Green and Tao \cite{gt:inverse-u3} (in the cyclic group and interval cases), and on the structural theory of $2$-step nilmanifolds (as developed for instance in \cite{host-kra-2step}).  We also crucially take advantage of the fact that the critical exponent $p_2$ is equal to $2$, allowing us to exploit Plancherel's theorem at a key juncture to simplify the expressions being computed.  The relevance of nilsystems for Gowers-type norms was first explicitly noted in \cite{host-kra}, although nilsystems had also been implicitly linked to the closely related problem of multiple recurrence in \cite{conze}, \cite{fw}, \cite{hk01}.

The total ergodicity hypothesis in Theorem \ref{u3-system} is needed to rule out a certain type of dynamical system that is not obviously modeled by the Euclidean case from Theorem \ref{syg}, and it is possible that the $2^{-1/8}$ threshold may change if one no longer assumes total ergodicity; see Remark \ref{total}.  We will deduce Theorem \ref{u3-cyclic} from Theorem \ref{u3-n}.  The need to 
 lift up to a finite extension $\Z/qN\Z$ of $\Z/N\Z$ is technical, and is due to functions $f: \Z/N\Z \to \C$ such as
$$ f(n) := e( n^2 / qN )$$
for $n = 0,\ldots,N-1$, which has a large $U^3(\Z/N\Z)$ norm, but has low correlation with $e(P)$ for any quadratic $P: \Z/N\Z \to \R/\Z$ with period $N$, if $N$ is large compared with $q$.  It may be possible that the threshold $2^{-1/8}$ is high enough to exclude such examples, but we have not pursued this matter here.  We do not know if the constant $2^{-1/8}$ is best possible for Theorem \ref{u3-cyclic}; there is a technical difficulty when trying to embed the sharp counterexamples from an interval $[N]$ into a cyclic group $\Z/N\Z$ with no loss in constants.  We also do not know if the constant $2^{-1/8}$ is best possible for Theorem \ref{u3-n}, due to the issue of a possible gap between the $L^\infty([N])$ and $L^2([N])$ norms.

We do not know what the analogues of Theorems \ref{u3-system}, \ref{u3-n}, \ref{u3-cyclic} are for higher step.  The inverse theorem of Host-Kra and Green-Tao extend to higher step (see \cite{host-kra}, \cite{gtz} respectively), which essentially reduces matters to checking that the claimed assertions are true for nilsystems (in the ergodic case) or for nilsequences (in the interval and cyclic group cases). However, the structure of nilmanifolds of step $3$ or higher is significantly more complicated than in the $2$-step case, but more importantly, the critical exponent $p_k$ is no longer equal to $2$, and certain exact identities arising from the Plancherel theorem are no longer available to simplify the problem.

The authors are indebted to Tim Austin for posing these questions, and the anonymous referee for many useful comments and corrections.

\section{$L^\infty$ near-extremisers on groups}\label{infty-group-sec}

We now prove Theorem \ref{infty-group}, which is the easiest of the near-extremiser results.  We will just prove the claim for near-extremals with $\eps > 0$; the claim for extremals is simpler and can in any event be obtained from the $\eps > 0$ case by a limiting argument (or by adapting the proof).

We will need a simple lemma from \cite[Lemma C.1]{bergelson-tao-ziegler}:

\begin{lemma}[Separation lemma]\label{pcon}  Let $k \geq 1$ be an integer, and let $P: G \to \R/\Z$ be a polynomial of degree $\leq k$ such that $\| e( P ) - 1\|_{L^2(G)} < 2^{-k+1/2}$.  Then $P$ is constant.
\end{lemma}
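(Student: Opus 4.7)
The plan is to induct on $k \geq 1$. The key observation is that a single derivative $\Delta_h P(x) := P(x-h) - P(x)$ reduces the degree from $\leq k$ to $\leq k-1$, while costing at most a factor of $2$ in the $L^2$-distance of $e(P)$ from $1$, by the triangle inequality together with translation invariance of the normalised Haar measure on the compact group $G$. The exponent $2^{-k+1/2}$ in the hypothesis is chosen precisely so that this doubling cost matches the step in degree.

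For the base case $k=1$, I would invoke the classification of degree-$\leq 1$ polynomials on an abelian group as affine characters $P = \chi + c$ for a continuous homomorphism $\chi: G \to \R/\Z$ and a constant $c$. If $\chi$ were non-trivial, then $e(\chi)$ would be a non-trivial character on the compact abelian group $G$, so $\int_G e(P)\,d\mu = e(c)\int_G e(\chi)\,d\mu = 0$ by character orthogonality; expanding then yields $\|e(P)-1\|_{L^2(G)}^2 = 2$, contradicting the hypothesis $\|e(P)-1\|_{L^2(G)} < 2^{-1/2}$. Hence $\chi \equiv 0$ and $P$ is constant.

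For the inductive step, given $P$ of degree $\leq k$ with $\|e(P)-1\|_{L^2(G)} < 2^{-k+1/2}$, I would observe that for every $h \in G$ the function $\Delta_h P$ has degree $\leq k-1$ and satisfies
\begin{equation*}
\|e(\Delta_h P) - 1\|_{L^2(G)} = \|e(P(\cdot-h)) - e(P)\|_{L^2(G)} \leq 2\,\|e(P) - 1\|_{L^2(G)} < 2^{-(k-1)+1/2}.
\end{equation*}
The inductive hypothesis then forces $\Delta_h P$ to be $x$-independent for each $h$, so $\Delta_{h_1}\Delta_{h_2} P \equiv 0$ and $P$ itself has degree $\leq 1$. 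Since $2^{-k+1/2} \leq 2^{-1/2}$ for all $k \geq 1$, the base case applies to $P$ directly and concludes that $P$ is constant.

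There is no real obstacle here beyond bookkeeping: one must track the strict inequality carefully through the doubling at each inductive step, but no deeper tool is needed beyond character orthogonality on a compact abelian group and the translation invariance of the normalised Haar measure.
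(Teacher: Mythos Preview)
Your proof is correct and follows essentially the same approach as the paper's: induction on $k$, with the inductive step applying the triangle inequality and translation invariance to bound $\|e(\Delta_h P)-1\|_{L^2(G)}$ by $2\|e(P)-1\|_{L^2(G)}$, reducing to degree $\leq 1$ and then to the $k=1$ case. Your base case spells out explicitly the affine-character classification and the computation $\|e(P)-1\|_{L^2(G)}^2=2$ via orthogonality, which is exactly what the paper means by ``immediate from Fourier analysis and Pythagoras' theorem.''
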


\begin{proof}  When $k=1$ the claim is immediate from Fourier analysis and Pythagoras' theorem (since $e(P)$ is orthogonal to $1$ when $P$ is a non-constant linear polynomial), so suppose that $k>1$ and the claim has already been proven for $k-1$.  If
$$ \| e( P ) - 1\|_{L^2(G)} < 2^{-k+1/2},$$
then for any $h \in G$ we have
$$ \| e( T^h P ) - 1\|_{L^2(G)} < 2^{-k+1/2},$$
and thus by the triangle inequality
$$ \| e( T^h P ) - e(P) \|_{L^2(G)} < 2^{-k+3/2},$$
which we rearrange as
$$ \| e( \Delta_h P ) - 1 \|_{L^2(G)} < 2^{-(k-1)+1/2}.$$
By induction hypothesis, this implies that $\Delta_h P$ is constant for every $h$, thus $P$ is of degree $\leq 1$, and the claim then follows from the
$k=1$ case.
\end{proof}

One consequence of this lemma (and the second countability of $G$, which implies that $L^2(G)$ is separable) is that there are at most countably many polynomials $P$ of a given degree, up to constants and almost everywhere equivalence.

We now return to the proof of Theorem \ref{infty-group}.  We begin
with the easy case $k=1$.  Here we have $\|f\|_{L^\infty(G)}\leq 1$ and
$$ \left|\int_G f\ d\mu\right| \geq 1-\eps.$$
By rotating $f$ by a phase (which does not affect either the hypothesis or conclusion of the theorem) we may assume that
$$ \operatorname{Re} \int_G f\ d\mu \geq 1-\eps$$
and thus
$$ \int_G (1 - \operatorname{Re} f)\ d\mu = o(1).$$
By Markov's inequality, we thus have $1 - \operatorname{Re} f(x)= o(1)$ for $1-o(1)$ of the values of $x \in G$, which implies that $\|f-1\|_{L^1(G)} = o(1)$, and the claim follows.  For future reference we remark that the above argument did not use the group structure of $G$, and thus also establishes the $k=1$ case of Theorem \ref{infty-N} and Theorem \ref{infty-ergodic}.

Now assume inductively that $k \geq 2$, and that Theorem \ref{infty-group} has already been proven for $k-1$.  It will be convenient to relax the hypothesis $\|f\|_{U^k(G)} \geq 1-\eps$ to $\|f\|_{U^k(G)} \geq 1-o(1)$, as this will give us the freedom to perturb $f$ by $o(1)$ in the $L^{p_k}(G)$ (and hence $U^k(G)$) norm, subject of course to the constraint that the $L^\infty(G)$ norm of $f$ remains bounded above by $1$.

It will be convenient to adopt the following notation: a measurable subset of $G$ will be called \emph{very dense} if its measure is $1-o(1)$.

We may assume that $\eps$ is small, as the claim is vacuous for large $\eps$.  From \eqref{ukg2} we have
$$ \|f\|_{L^{p_k}(G)} = 1 - o(1)$$
and thus (since $\|f\|_{L^\infty(G)}\leq 1$), we have $|f|=1-o(1)$ for a very dense set of points $x$ in $G$.  Thus, by modifying $f$ by $o(1)$ in $L^{p_k}(G)$ norm, we may assume without loss of generality that $|f|=1$ everywhere.

Next, we use \eqref{fk} to observe that
$$ \int_G \|(T^h f) \overline{f} \|_{U^{k-1}(G)}^{2^{k-1}}\ d\mu(h) = 1-o(1).$$
Since
$$ \|(T^h f) \overline{f} \|_{U^{k-1}(G)} \leq \|(T^h f) \overline{f} \|_{L^\infty(G)} = 1,$$
we conclude from Markov's inequality that
$$ \|(T^h f) \overline{f} \|_{U^{k-1}(G)} = 1 - o(1)$$
for all $h$ in a very dense subset of $G$.  Applying the induction hypothesis, we see that there exists a very dense subset $A$ of $G$ such that for all $h \in A$, there exists a polynomial $P_h: G \to \R/\Z$ of degree $\leq k-2$ such that
$$ \|(T^h f) \overline{f} - e(P_h) \|_{L^1(G)} = o(1).$$

We now pass from the very dense set $A$ to all of $G$ as follows.  Let $h \in G$.  As $A$ and $h-A$ both have density greater than $1/2$ in $G$, they must intersect, thus we can write $h=a+b$ for some $a, b \in A$.  By hypothesis, one has
$$ \|(T^a f) \overline{f} - e(P_a) \|_{L^1(G)} = o(1)$$
and
$$ \|(T^b f) \overline{f} - e(P_b) \|_{L^1(G)} = o(1).$$
Since $|f|=1$, we conclude that
$$ \|T^a f - e(P_a) f \|_{L^1(G)} = o(1)$$
and
$$ \|T^{a+b} f - e(T^a P_b) T^a f \|_{L^1(G)} = o(1).$$
From the triangle inequality, we conclude that
$$ \|T^h f - e(T^a P_b + P_a) f \|_{L^1(G)} = o(1).$$
The expression $T^a P_b + P_a$ is a polynomial of degree $\leq k-2$.  We have thus shown that for \emph{every} $h \in G$, there exists a polynomial $Q_h: G \to \R/\Z$ of degree $\leq k-2$ such that
$$ \|T^h f - e(Q_h) f \|_{L^1(G)} = o(1).$$
Fix a $Q_h$ for each $h \in G$.  We may select $Q_h$ in a Borel measurable manner, because there are only countably many polynomials up to constants; see \cite[Appendix C]{bergelson-tao-ziegler}.

If $h, h' \in G$, then by repeating the previous argument we see that
$$ \| T^{h+h'} f - e(T^h Q_{h'} + Q_h) f \|_{L^1(G)} = o(1).$$
On the other hand,
$$ \| T^{h+h'} f - e(Q_{h+h'}) f \|_{L^1(G)} = o(1).$$
From the triangle inequality (and the fact that $|f|=1$) we thus have
$$ \| e( Q_{h+h'} - T^h Q_{h'} - Q_h ) - 1\|_{L^1(G)} = o(1)$$
which (by the boundedness of $e( Q_{h+h'} - T^h Q_{h'} - Q_h ) - 1$) implies that
\begin{equation}\label{qhh}
 \| e( Q_{h+h'} - T^h Q_{h'} - Q_h ) - 1\|_{L^2(G)} = o(1).
\end{equation}

For $\eps$ small enough, we thus conclude from Lemma \ref{pcon} that $Q_{h+h'} - T^h Q_{h'} - Q_h$ is constant for every $h,h' \in G$, thus
\begin{equation}\label{qhh2} Q_{h+h'} - T^h Q_{h'} - Q_h = c_{h,h'} \mod 1
\end{equation}
for some $c_{h,h'} \in \R$.  From \eqref{qhh} we can assume that $c_{h,h'} = o(1)$.  As $Q_h$ depends in a Borel measurable fashion on $h$, we see that $c_{h,h'}$ depends in a Borel measurable fashion on $h,h'$.

From \eqref{qhh2} we obtain the equation
$$ c_{h,h'} + c_{h+h',h''} = c_{h,h'+h''} + c_{h',h''} \mod 1$$
for all $h,h',h'' \in G$;  since the $c_{h,h'} = o(1)$, we can remove the $\mod 1$ projection here and conclude that
\begin{equation}\label{chh0}
c_{h,h'} + c_{h+h',h''} = c_{h,h'+h''} + c_{h',h''} 
\end{equation}
for all $h,h',h'' \in G$.

If we average \eqref{chh} in $h''$, we obtain a relation of the form
\begin{equation}\label{chh}
 c_{h,h'} = b(h) + b(h') - b(h+h')
\end{equation}
where $b(h) := \int_G c_{h,h''}\ d\mu(h'')$.  Since $c_{h,h''}=o(1)$, we have $b(h)=o(1)$ also.

\begin{remark}  In the language of cohomology for dynamical systems, the equation \eqref{chh0} asserts that the map $(h,h') \mapsto c_{h,h'}$ is a \emph{$2$-cocycle}, and the formula \eqref{chh} asserts that this map is in fact a \emph{$2$-coboundary}.  Thus we have shown that all sufficiently small $\R/\Z$-valued $2$-cocycles are $2$-coboundaries.  Once one moves away from the near-extremal regime, then genuinely non-trivial cocycles begin appearing, which ultimately forces one to replace polynomials by nilsequences and related objects; see \cite{host-kra}.
\end{remark}

Now set $\tilde Q_h := Q_h + b(h)$.  Then $\tilde Q_h$ is still a polynomial of degree $\leq k-2$, and we still have
\begin{equation}\label{thq}
 \| T^h f - e(\tilde Q_h) f \|_{L^1(G)} = o(1)
\end{equation}
for all $h \in G$.  From \eqref{qhh2} and \eqref{chh} we have the cocycle equation
$$ \tilde Q_{h+h'} = T^h \tilde Q_{h'} + \tilde Q_h$$
for all $h, h' \in G$.  Specialising this to $0$, we see that
$$ \tilde Q_{h'}(-h) = \phi(-h-h') - \phi(-h)$$
for all $h, h' \in G$, where $\phi(x) := \tilde Q_{-x}(0)$; thus
$$ \tilde Q_h := \Delta_h \phi.$$
As $\phi$ is Borel measurable and all derivatives $\Delta_h \phi$ of $\phi$ are polynomials of degree $\leq k-2$, $\phi$ is of degree $\leq k-1$.  We can then rewrite \eqref{thq} as
$$ \|T^h \tilde f - \tilde f \|_{L^1(G)} =o(1)$$
where $\tilde f:= f \cdot e(-\phi)$.  Averaging in $h$ using Minkowski's inequality we see that
$$ \| \tilde f - c \|_{L^1(G)} = o(1)$$
for some constant $c$.  As $|\tilde f|=1$, we may take $|c|=1$.  We then have
$$ \|f - c \cdot e(\phi)\|_{L^1(G)} = o(1)$$
and the claim follows.  This completes the proof of Theorem \ref{infty-group}.

\section{$L^\infty$ near-extremisers on intervals}\label{infty-n-sec}

We now prove Theorem \ref{infty-N}.  The arguments will closely follow the proof of Theorem \ref{infty-group} in the previous section, but localised to intervals such as $[N]$.  Again, we only treat the $\eps > 0$ case.

For $k=1$, the proof proceeds precisely as in the previous section, so we assume inductively that $k \geq 2$ and the claim has already been proven for $k-1$.

As in the previous section, we will just prove the near-extremiser claim, and relax the hypotheses to $\|f\|_{L^\infty([N])} \leq 1$ and $\|f\|_{U^k([N])} \geq 1-o(1)$.  We may assume that $\eps$ is positive but small, as the claim is trivial for large $\eps$.

It will be convenient to introduce the expectation notation $\E_{x \in A} f(x) := \frac{1}{|A|} \sum_{x \in A} f(x)$ for any finite non-empty set $A$.
We also say that a set $B$ is a \emph{very dense} subset of a finite non-empty set $A$ if one has $|B| \geq (1-o(1)) |A|$.

We embed $[N]$ in $\Z/\tilde N\Z$ for (say) $\tilde N := 2^k N + 1$, and extend $f$ by zero to a function from $\Z/\tilde N \Z \to \C$, which by abuse of notation we will still call $f$.  From \eqref{util} we have
$$ \|f\|_{U^k(\Z/\tilde N\Z)} \geq (1-o(1)) \|1_{[N]}\|_{U^k(\Z/\tilde N\Z)}$$
and hence by \eqref{fk}
$$ \E_{h \in \Z/\tilde N\Z} \| T^h f \overline{f} \|_{U^{k-1}(\Z/\tilde N\Z)} \geq (1-o(1))
\E_{h \in \Z/\tilde N\Z} \| 1_{h+[N]} 1_{[N]} \|_{U^{k-1}(\Z/\tilde N\Z)}.$$
The expectation on the right-hand side can be computed to be comparable to one (with bounds depending on $k$), and thus
$$  \E_{h \in \Z/\tilde N\Z}\| 1_{h+[N]} 1_{[N]} \|_{U^{k-1}(\Z/\tilde N\Z)} - \| T^h f \overline{f} \|_{U^{k-1}(\Z/\tilde N\Z)} = o(1).$$
As $\|f\|_{L^\infty(\Z/\tilde N\Z)} \leq 1$, the expression inside the expectation is non-negative.  Thus, by Markov's inequality, one has
$$ \| 1_{h+[N]} 1_{[N]} \|_{U^{k-1}(\Z/\tilde N\Z)} - \| T^h f \overline{f} \|_{U^{k-1}(\Z/\tilde N\Z)} = o(1)$$
for all $h$ in a very dense subset of $\Z/\tilde N\Z$.  In particular, for all $h$ in a very dense subset of $[N/2]$, one has
$$ \| T^{-h} f \overline{f} \|_{U^{k-1}(\Z/\tilde N\Z)} = \| 1_{[N-h]} \|_{U^{k-1}(\Z/\tilde N\Z)} - o(1).$$
The norm on the right-hand side can be calculated to be comparable to one (with bounds depending on $k$). If we thus let $f_h: [N-h] \to \C$ be the restriction of $T^{-h} f \overline{f}$ to $[N-h]$, we conclude from \eqref{util} that
$$ \|f_h\|_{U^{k-1}([N-h])} = 1-o(1).$$
Applying the induction hypothesis, we conclude that for all $h$ in a very dense subset $H$ of $[N/2]$, there exists a polynomial $P_h: \Z \to \R/\Z$ of degree $\leq k-2$ such that
\begin{equation}\label{fah}
 \|f_h - e(P_h) \|_{L^1([N-h])} = o(1).
\end{equation}

In particular, by the triangle inequality
$$ \| |f_h| - 1 \|_{L^1([N-h])} = o(1).$$
Using the crude bound $|f_h| \leq |f|, |T^{-h} f| \leq 1$, we thus have
$$ \E_{x \in [N-h]} 1 - |f(x)| = o(1)$$
and
$$ \E_{x \in [N-h]+h} 1 - |f(x)| = o(1)$$
for all $h \in H$, and so by the triangle inequality
$$ \E_{x \in [N]} 1-|f(x)| = o(1).$$
From this and Markov's inequality we conclude that $|f(x)|=1-o(1)$ for all $x$ in a very dense subset of $[N]$.  We may thus modify $f$ by $o(1)$ in $L^{2^k/(k+1)}([N])$ norm and assume without loss of generality that $|f(x)|=1$ for all $x \in [N]$.

We return now to \eqref{fah}, which we can now rewrite as
\begin{equation}\label{thaf0}
 \| T^{-h} f - e(P_h) f\|_{L^1([N-h])} = o(1)
\end{equation}
for all $h \in H$.

Let $h \in [-N/8,N/8]$.  Then, as $H$ contains a very dense subset of $[N/4]$, we can find $a, b \in H \cap [N/4]$ such that $b=a-h$.  From \eqref{thaf0} one has
$$ \| T^{-a} f - e(P_a) f \|_{L^1([3N/4])} = o(1)$$
and
$$ \| T^{-a+h} f - e(P_{a-h}) f \|_{L^1([3N/4])} = o(1)$$
and thus by the triangle inequality
$$ \| T^{-a+h} f - e( P_{a-h}-P_a) T^{-a} f \|_{L^1([3N/4])} = o(1)$$
and thus
$$ \| T^{h} f - e( T^a(P_{a-h}-P_a) ) f \|_{L^1([N/2])} = o(1).$$
Thus, for each $h \in [-N/8,N/8]$, we can choose a polynomial $Q_{h}$ of degree $\leq k-2$ such that
$$ \| T^{h} f - e( Q_{h} ) f \|_{L^1([N/2])} = o(1).$$
Now let $h, h' \in [-N/16,N/16]$.  Then we have
\begin{align*}
\| T^{h} f - e( Q_{h} ) f \|_{L^1([N/4])} &= o(1) \\
\| T^{h+h'} f - e( T^{h} Q_{h'} ) f \|_{L^1([N/4])} &= o(1) \\
\| T^{h+h'} f - e( Q_{h+h'} ) f \|_{L^1([N/4])} &= o(1)
\end{align*}
and thus by the triangle inequality and the fact that $|f|=1$ on $[N/4]$, we have
$$ \| e( Q_{h+h'} - T^h Q_{h'} - Q_h ) - 1\|_{L^2([N/4])} = o(1)$$
for all $h,h' \in [-N/16,N/16]$.

We now need a variant of Lemma \ref{pcon}:

\begin{lemma}[Separation lemma]\label{pcon2}  Let $k \geq 1$ and $N \geq 1$ be integers, and let $P: [N] \to \R/\Z$ be a polynomial $P(n) = \sum_{i=0}^k c_i n^i$ of degree $\leq k$ and coefficients $c_0,\ldots,c_k$ such that $\| e( P ) - 1\|_{L^2([N])} = o(1)$.  If $N$ is sufficiently large depending on $k$, then for each $0 \leq i \leq k$, one has $c_i = o(N^{-i}) \mod 1$.
\end{lemma}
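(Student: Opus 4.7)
The plan is to induct on $k$. The base case $k=0$ is immediate: for $P(n) = c_0$ constant, $|e(c_0)-1|=o(1)$ forces $c_0 = o(1) \bmod 1$. For the inductive step, assume the result for polynomials of degree $\leq k-1$. The strategy is first to extract a sharp bound on the leading coefficient $c_k$, and then to reduce to the lower-degree setting by subtracting off $c_k n^k$ and applying the induction hypothesis.

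To control $c_k$, I would use the shift trick: for each $h \in [1, N/2]$, a change of variables gives $\|e(P(\cdot+h)) - 1\|_{L^2([N-h])} = o(1)$ (with only a bounded multiplicative loss since $N-h \geq N/2$), and the triangle inequality combined with $|e(P)| = 1$ then yields
$$\|e(P(\cdot+h) - P(\cdot)) - 1\|_{L^2([N-h])} = o(1).$$
Now $n \mapsto P(n+h) - P(n)$ is a polynomial of degree $\leq k-1$ in $n$ whose $n^{k-1}$-coefficient is precisely $kc_kh$. Applying the induction hypothesis on $[N-h]$ (of length at least $N/2$) then gives
$$kc_kh = o(N^{-(k-1)}) \bmod 1 \qquad \text{uniformly for } h \in [1, N/2].$$

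The key step is to amplify this into $c_k = o(N^{-k}) \bmod 1$. Let $\alpha \in (-1/2, 1/2]$ be the representative of $kc_k \bmod 1$; the $h=1$ case gives $|\alpha| = o(N^{-(k-1)})$, which is at most $1/4$ for $N$ large enough (or the hypothesis $L^2$ norm small enough). Suppose, toward a contradiction, that $|\alpha|\cdot N/2 \geq 1/2$: since consecutive values of $|\alpha h|$ differ by at most $|\alpha|\leq 1/4$ and increase monotonically from $|\alpha|$ to at least $1/2$, some $h\in[1,N/2]$ must satisfy $|\alpha h|\in[1/4,1/2]$, giving distance $\geq 1/4$ to the nearest integer and contradicting the previous bound. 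Hence $|\alpha|\cdot N/2 < 1/2$; therefore $|\alpha h| < 1/2$ throughout $[1, N/2]$, so the distance to the nearest integer equals $|\alpha h|$, and taking $h=\lfloor N/2\rfloor$ yields $|\alpha|=o(N^{-k})$, i.e.\ $c_k=o(N^{-k}) \bmod 1$.

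To conclude, pick an integer $m_k$ with $|c_k - m_k| = o(N^{-k})$. On $[N]$, the bound $|(c_k-m_k)n^k|\leq o(N^{-k})\cdot N^k = o(1)$ gives $|e((c_k-m_k)n^k) - 1| = o(1)$ uniformly, and since $e(m_k n^k)=1$ on integers, the polynomial $Q(n) := \sum_{i=0}^{k-1}c_in^i$ satisfies $\|e(Q) - 1\|_{L^2([N])} = o(1)$. Applying the induction hypothesis to $Q$ then provides the remaining estimates $c_i=o(N^{-i}) \bmod 1$ for $0\leq i\leq k-1$. The main difficulty lies in the amplification step: it is what converts the weak $h=1$ bound of $o(N^{-(k-1)})$ into the sharp $o(N^{-k})$ bound, by exploiting the uniformity of the estimate over the full range $h \in [1, N/2]$.
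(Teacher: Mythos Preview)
Your amplification argument correctly shows $kc_k = o(N^{-k}) \bmod 1$, but the inference ``i.e.\ $c_k = o(N^{-k}) \bmod 1$'' is false: from $kc_k$ being close to an integer you can only conclude that $c_k$ is close to a multiple of $1/k$. Consider for instance $k=2$ and $c_2 = 1/2$: then $kc_k = 1 \equiv 0 \bmod 1$, so your $\alpha = 0$, yet $c_2 = 1/2$ is bounded away from every integer. Of course $P(n) = n^2/2$ does not itself satisfy the hypothesis $\|e(P)-1\|_{L^2([N])} = o(1)$, but at that point in your argument you have not used enough of the hypothesis to rule this value of $c_k$ out. Concretely, the only information you extracted from the induction hypothesis applied to $P(\cdot+h)-P(\cdot)$ was its top coefficient $kc_kh$; this discards the lower-order coefficients of the derivative, which are what eventually force $c_k$ to be near an integer rather than near a proper fraction $j/k$. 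Your final step, subtracting an integer $m_k$ with $|c_k - m_k| = o(N^{-k})$, then fails because no such $m_k$ need exist.

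The paper's proof encounters the same obstruction and addresses it by not claiming $c_k = o(N^{-k}) \bmod 1$ outright. Instead it only concludes $c_k = q_k + o(N^{-k})$ with $q_k$ a multiple of $1/k$, then restricts $h$ to multiples of $k$ to kill the contribution of $q_k$ when analysing the next coefficient down, and iterates to obtain $c_i = q_i + o(N^{-i})$ with each $q_i$ a multiple of $1/k!$. After stripping off the $o(N^{-i})$ parts, $P$ becomes $k!$-periodic, and one finishes by invoking the compact-group separation lemma (Lemma~\ref{pcon}) on $\Z/k!\Z$. Your reduction-to-lower-degree step would need a similar mechanism to handle the possible rational offset $q_k$.
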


\begin{proof}  For $k=1$ the claim follows by direct calculation of geometric series (or by Fourier analysis), so suppose that $k \geq 2$ and that the claim has already been proven for $k-1$.  Arguing exactly as in the proof of Lemma \ref{pcon}, we see that
$$ \| e( \Delta_h P ) - 1 \|_{L^2([N/2])} = o(1)$$
for all $h \in [-N/2,N/2]$.  By induction hypothesis, this implies that for each $1 \leq i \leq k-1$, the $i^{th}$ Fourier coefficient of $\Delta_h P$ is $o(N^{-i}) \mod 1$.  When $i=k-1$, this coefficient is $k h c_k$, thus $khc_k = o(N^{-k+1}) \mod 1$ for all $h \in [-N/2,N/2]$.  We conclude that $c_k = q_k + o(N^{-k}) \mod 1$, where $q_k$ is a multiple of $1/k$.  If we then turn to $i=k-2$, and restrict $h$ to be a multiple of $k$ to eliminate the contribution of $q_k$, we see that the $(k-2)^{th}$ coefficient of $\Delta_h P$ is $(k-1) h c_{k-1} + o(N^{-k+2}) \mod 1$.  Repeating the previous argument, we see that $c_{k-1} = q_{k-1} + o(N^{-k}) \mod 1$ where $q_{k-1}$ is a multiple of $1/k(k-1)$.  Iterating this, we eventually obtain $c_i = q_i + o(N^{-i}) \mod 1$ for all $1 \leq i \leq k$ and some $q_i$ that is a multiple of $1/k!$.  The $o(N^{-i})$ component of $c_i$ only influences $e(P)$ by $o(1)$ on $[N]$, so without loss of generality we may assume that $c_i = q_i$, t
 hus $P$ is now periodic with period $k!$, and can be viewed as a function on $\Z/k!\Z$.  If $N$ is large enough (e.g. $N \geq k!$ will suffice), this implies that
$$ \|e(P)-1\|_{L^2(\Z/k!\Z)} = o(1).$$
The claim now follows from Lemma \ref{pcon}.
\end{proof}

Applying this lemma, we thus see that for all $h,h' \in [-N/16,N/16]$, the degree $\leq k-2$ polynomial
\begin{equation}\label{chha}
 c_{h,h'} := Q_{h+h'} - T^h Q_{h'} - Q_h
\end{equation}
is nearly constant in the sense that its $i^{th}$ coefficient is $o(N^{-i})$ for each $0 \leq i \leq k-2$.

From \eqref{chha} we see that the $c_{h,h'}$ obey the $2$-cocycle equation
$$ c_{h,h'} + c_{h+h',h''} = c_{h,h'+h''} + T^h c_{h',h''};$$
this is similar to the situation in the previous situation, but the $c_{h,h'}$ are now polynomials instead of constants (which, among other things, introduces the shift $T^h$ in the above equation).  However, one can remove the polynomial coefficients as follows:

\begin{lemma}  For any $-1 \leq j \leq k-2$, there exists $0 < \delta_j \leq 1/16$ depending only on $j$ and $k$, and a polynomial $\tilde Q_h$ of degree $\leq k-2$ for each $h \in [-\delta_j N, \delta_j N]$, such that
\begin{equation}\label{thqh}
\| T^{h} f - e( \tilde Q_{h} ) f \|_{L^1([N/2])} = o(1)
\end{equation}
for all $h \in [-\delta_j N, \delta_j N]$, and such that
$$ \tilde c_{h,h'} := \tilde Q_{h+h'} - T^h \tilde Q_{h'} - \tilde Q_h$$
is a degree $\leq j$ polynomial with an $i^{th}$ coefficient of $o(N^{-i})$ for each $1 \leq i \leq j$ and all $h \in [-\delta_j N/2, \delta_j N/2]$.
\end{lemma}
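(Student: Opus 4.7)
The plan is to prove the lemma by downward induction on $j$, from the base case $j = k-2$ down to $j = -1$, peeling off the top-degree coefficient of the cocycle at each stage. The base case is essentially in hand: taking $\tilde Q_h := Q_h$ and $\delta_{k-2} := 1/16$, both the $L^1$ estimate \eqref{thqh} and the smallness of every coefficient $0 \leq i \leq k-2$ of $c_{h,h'}$ were already established in the discussion preceding the lemma via Lemma \ref{pcon2}.

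For the inductive step, assume the lemma at level $j+1$ with data $(\delta_{j+1}, \tilde Q_h^{(j+1)})$. Write $\tilde c_{h,h'}^{(j+1)}(x) = \sum_{i=0}^{j+1} c_{h,h'}^{(i)} x^i$, so that $c_{h,h'}^{(i)} = o(N^{-i})$ for $1 \leq i \leq j+1$. The key observation is that the shift $T^h$ preserves leading coefficients; thus, extracting the $x^{j+1}$ coefficient of the polynomial cocycle identity
\begin{equation*}
\tilde c_{h,h'}^{(j+1)} + \tilde c_{h+h',h''}^{(j+1)} = \tilde c_{h,h'+h''}^{(j+1)} + T^h \tilde c_{h',h''}^{(j+1)}
\end{equation*}
yields a scalar $2$-cocycle equation
\begin{equation*}
c^{(j+1)}_{h,h'} + c^{(j+1)}_{h+h',h''} = c^{(j+1)}_{h,h'+h''} + c^{(j+1)}_{h',h''},
\end{equation*}
valid for $h,h',h''$ in a slightly shrunken interval (so that all sums remain in the domain of $\tilde Q_h^{(j+1)}$). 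Averaging the equation over $h''$ produces the coboundary representation $c^{(j+1)}_{h,h'} = b(h) + b(h') - b(h+h')$ where $b(h) := \E_{h''} c^{(j+1)}_{h,h''} = o(N^{-(j+1)})$, exactly as in the end of Section \ref{infty-group-sec}.

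I then define $\tilde Q_h^{(j)}(x) := \tilde Q_h^{(j+1)}(x) + b(h)\,x^{j+1}$. Since $|b(h)|\,|x|^{j+1} = o(1)$ uniformly on $[N]$, the phase $e(\tilde Q_h^{(j)})$ agrees with $e(\tilde Q_h^{(j+1)})$ up to an $o(1)$ perturbation in $L^\infty$, so the $L^1$ estimate \eqref{thqh} is inherited at level $j$. A direct expansion using $T^h[x^{j+1}] = (x-h)^{j+1}$ gives
\begin{equation*}
\tilde c^{(j)}_{h,h'}(x) = \tilde c^{(j+1)}_{h,h'}(x) + \bigl[b(h+h') - b(h) - b(h')\bigr] x^{j+1} + b(h')\bigl[x^{j+1} - (x-h)^{j+1}\bigr].
\end{equation*}
The coefficient of $x^{j+1}$ is $c^{(j+1)}_{h,h'} + \bigl[b(h+h') - b(h) - b(h')\bigr] = 0$ by construction, so $\tilde c^{(j)}_{h,h'}$ has degree $\leq j$. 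The residual $b(h')[x^{j+1}-(x-h)^{j+1}]$ contributes to the $\ell^{th}$ coefficient a quantity of size $O\bigl(b(h')\,|h|^{j+1-\ell}\bigr) = o(N^{-(j+1)}) \cdot O(N^{j+1-\ell}) = o(N^{-\ell})$ for each $1 \leq \ell \leq j$, preserving the smallness at level $j$. Choosing $\delta_j$ to be a fixed fraction of $\delta_{j+1}$ (e.g.\ $\delta_j := \delta_{j+1}/4$) absorbs the various interval shrinkages.

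The main obstacle I anticipate is purely the bookkeeping of the shrinking intervals and accumulated $o(1)$ errors: since only $k-1$ inductive steps are performed and $k$ is fixed, both the final constant $\delta_{-1}$ and the final error term remain admissible. Terminating the induction at $j = -1$ gives a degree $\leq -1$ (i.e.\ identically zero) cocycle, so that the resulting polynomials $\tilde Q_h := \tilde Q_h^{(-1)}$ form a genuine $\R/\Z$-valued $1$-cocycle for the $T$-action, in a form that is directly amenable to the coboundary-extraction argument from Section \ref{infty-group-sec}.
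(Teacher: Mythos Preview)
Your overall architecture (downward induction, extract the top coefficient as a scalar $2$-cocycle, trivialise it, repeat) matches the paper's. The gap is in the step where you ``average over $h''$'' to obtain the exact coboundary identity
\[
c^{(j+1)}_{h,h'} = b(h) + b(h') - b(h+h'), \qquad b(h) := \E_{h''\in I}\, c^{(j+1)}_{h,h''}.
\]
That identity is proved in Section~\ref{infty-group-sec} by using the translation invariance of Haar measure to replace $\E_{h''\in I} c^{(j+1)}_{h,h'+h''}$ by $\E_{h''\in I} c^{(j+1)}_{h,h''}$. On an interval $I\subset\Z$ of length $\sim N$ there is no such invariance: the change of variable $h''\mapsto h''-h'$ sends $I$ to $I-h'$, and the discrepancy between the two averages is of size
\[
\frac{|h'|}{|I|}\cdot \sup_{h,h''}\bigl|c^{(j+1)}_{h,h''}\bigr| \;=\; O\!\left(\frac{|h'|}{N}\right)\cdot o\!\left(N^{-(j+1)}\right)\;=\;o\!\left(N^{-(j+1)}\right)
\]
when $|h'|$ is comparable to $N$. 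Thus your correction only yields a new $(j{+}1)^{\mathrm{st}}$ coefficient of size $o(N^{-(j+1)})$, \emph{not} zero; the degree of $\tilde c^{(j)}_{h,h'}$ has not dropped, and the induction does not close. (The same obstruction blocks the $\ell^\infty(\Z)$-mean trick used in Section~\ref{infty-system-sec}: no invariant mean lives on a bounded interval.)

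The paper circumvents this with a two-stage correction. First one averages the cocycle identity only at $h'=1$; because the shift is by a single step, the boundary error in the average is $O(1/N)\cdot o(N^{-(j+1)}) = o(N^{-(j+2)})$, and the resulting correction $a(h)$ drives the \emph{unit-step} cocycle $(c'_{h,1})^{(j+1)}$ down to $o(N^{-(j+2)})$. One then \emph{integrates} this along $\Z$, setting $b(0)=0$ and $b(h+1)=b(h)+(c'_{h,1})^{(j+1)}$; an induction on $h'$ using the cocycle equation with $h''=1$ shows that $(c'_{h,h'})^{(j+1)} = b(h+h')-b(h)-b(h')$ holds \emph{exactly}, so the second correction genuinely kills the top coefficient. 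Your argument is missing this reduction to the $h'=1$ case (where the averaging error gains a crucial factor of $N^{-1}$) followed by discrete integration.
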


\begin{proof} We perform downward induction on $j$.  The case $j=k-2$ follows from the previous discussion.  Now suppose that $0 \leq j < k-2$ and that the claim has already been proven for $j+1$.  Let $\tilde Q_h, \tilde c_{h,h'}, \delta_{j+1}$ be the objects generated by the induction hypothesis.  If we let $\tilde c_{h,h'}^{(j+1)}$ be the $(j+1)^{th}$ coefficient of $\tilde c_{h,h'}$, identified with an element of the fundamental domain $(-1/2,1/2]$, then we have $\tilde c_{h,h'}^{(j+1)} = o(N^{-j-1})$, and we have the $2$-cocycle equation
$$ \tilde c_{h,h'}^{(j+1)} + \tilde c_{h+h',h''}^{(j+1)} = \tilde c_{h,h'+h''}^{(j+1)} + \tilde c_{h',h''}^{(j+1)}$$
for all $h,h',h'' \in [-\delta_{j+1} N/2, \delta_{j+1} N/2]$.  We average this over all $h'' \in [-\delta_{j+1} N/2, \delta_{j+1} N/2]$.  If $h' = 1$, we have
$$ \E_{h'' \in [-\delta_{j+1} N/2, \delta_{j+1} N/2]} \tilde c_{h,1+h''}^{(j+1)} = \E_{h'' \in [-\delta_{j+1} N/2, \delta_{j+1} N/2]} \tilde c_{h,h''}^{(j+1)} + o(N^{-j-2}),$$
and so we have
$$ \tilde c_{h,1}^{(j+1)} + a(h+1) = a(h) + a(1) + o(N^{-j-2})$$
for $h \in [-\delta_{j+1} N/2,\delta_{j+1} N/2]$, where
$$
a(h) := \E_{h'' \in [-\delta_{j+1} N/2, \delta_{j+1} N/2]} \tilde c_{h,h''}^{(j+1)}.$$
From the bounds on $\tilde c_{h,h''}^{(j+1)}$, we have $a(h) = o(N^{-j-1})$.  If we then set $Q'_h(n) := \tilde Q_h(n) + \frac{1}{j+1} a(h) n^{j+1}$ to $\tilde Q_h(n)$, then $Q'_h$ also obeys the required property \eqref{thqh} (with a slightly different value of $o(1)$), and the $(j+1)^{th}$ coefficient $(c'_{h,h'})^{(j+1)}$ of the degree $\leq j+1$ polynomial
$$ c'_{h,h'} := Q'_{h+h'} - T^h Q'_{h'} - Q'_h$$
is given by
$$ (c'_{h,h'})^{(j+1)} = (\tilde c_{h,h'})^{(j+1)} + a(h+h') - a(h) - a(h')$$
and thus
$$ (c')_{h,1}^{(j+1)} = o(N^{-j-2})$$
for $h \in [-\delta_{j+1} N/2,\delta_{j+1} N/2]$.  Of course, $(c')_{h,h'}^{(j+1)}$ also obeys the cocycle equation
$$ (c'_{h,h'})^{(j+1)} + (c'_{h+h',h''})^{(j+1)} = (c'_{h,h'+h''})^{(j+1)} + (c'_{h',h''})^{(j+1)}$$
and vanishes when $h$ or $h'$ equals zero.

Now define $b(h)$ for $h \in [-\delta_{j+1} N/4,\delta_{j+1} N/4]$ by requiring that
$$ b(0) = 0$$
and
$$ b(h+1) = b(h) + (c')_{h,1}^{(j+1)}$$
for all $h \in [-\delta_{j+1} N/4,\delta_{j+1} N/4]$, thus $b(h) = o(N^{-j-1})$ for all $h \in [-\delta_{j+1} N/4,\delta_{j+1} N/4]$.

From the cocycle equation and induction on $h'$ one verifies that
$$ (c'_{h,h'})^{(j+1)} = b(h+h') - b(h) - b(h')$$
for all $h, h' \in [-\delta_{j+1} N/8,\delta_{j+1} N/8]$.    If we thus set
$$ Q''_h(n) := Q'_h(n) - \frac{1}{j+1} b(h) n^{j+1}$$
for all $h \in [-\delta_{j+1} N/8, \delta_{j+1} N/8]$, then $Q''_h$ also obeys the required property \eqref{thqh} (with a slightly different value of $o(1)$), and the $(j+1)^{th}$ coefficient of the degree $\leq j+1$ polynomial $Q''_{h+h'} - T^h Q''_{h'} - Q''_h$ vanishes for $h,h' \in [-\delta_{j+1} N/8,\delta_{j+1} N/8]$, so that this polynomial in fact has degree $\leq j$.  The claim follows (setting $\delta_j := \delta_{j+1}/8$).
\end{proof}

We apply this lemma with $j=-1$ to obtain polynomials $\tilde Q_h$ of degree $\leq k-2$ for each $h \in [-\delta_{-1} N, \delta_{-1} N]$ obeying \eqref{thqh}
and such that
$$ \tilde Q_{h+h'} - T^h \tilde Q_{h'} - \tilde Q_h = 0$$
for all $h, h' \in [-\delta_{-1} N/2, \delta_{-1} N/2]$.  In particular, setting $F(h) :=\tilde Q_h(0)$ for $h \in [-\delta_{-1} N/2, \delta_{-1} N/2]$, we have
$$ \tilde Q_h(x) = \Delta_h F(x)$$
for all $h, x \in [-\delta_{-1} N/4, \delta_{-1} N/4]$.  As each $\tilde Q_h$ is a polynomial of degree $\leq k-2$, this implies that
$$ \Delta_{h_1} \ldots \Delta_{h_{k}} F(x) = 0$$
whenever $h, x \in [-\delta_{-1} N/4k, \delta_{-1} N/4k]$.  Setting $h_1=\ldots=h_k=1$ and inducting on $k$, we conclude that there exists a polynomial $\tilde F: \Z \to \R/\Z$ of degree $\leq k-1$, thus
$$ \tilde F(x) = \sum_{i=0}^{k-1} f_i x^i$$
for some $f_0,\ldots,f_{k-1} \in \R/\Z$, such that $F(x)=\tilde F(x)$ for all $x \in [-\delta_{-1} N/4k, \delta_{-1} N/4k]$.  We conclude that $\tilde Q_h$ and $\Delta_h \tilde F$ agree on $[-\delta_{-1} N/8k, \delta_{-1} N/8k]$ for all $h \in [-\delta_{-1} N/8k, \delta_{-1} N/8k]$. By polynomial interpolation, this implies that $\tilde Q_h = \Delta_h \tilde F$ for all $h \in [-\delta_{-1} N/8k, \delta_{-1} N/8k]$.  From \eqref{thqh} we thus have
$$
\| T^{h} f - e( \Delta_h \tilde F ) f \|_{L^1([N/2])} = o(1)
$$
for all $h \in [-\delta_{-1} N/8k, \delta_{-1} N/8k]$; if we then set $\tilde f := f \cdot e(-\tilde F)$, then we have
$$
\| T^{h} \tilde f - \tilde f \|_{L^1([N/2])} = o(1)
$$
for all $h \in [-\delta_{-1} N/8k, \delta_{-1} N/8k]$.

Note that if we multiply $f$ by a polynomial phase such as $e(-\tilde F)$, then both the hypothesis and conclusion of Theorem \ref{infty-N} remain unchanged (and $f$ remains of magnitude $1$).  Thus we may assume without loss of generality that $\tilde f=f$, thus
$$
\| T^{h} f - f \|_{L^1([N/2])} = o(1)
$$
and so (recalling the definition of $f_h = T^h f \overline{f}$),
$$
\| f_h - 1 \|_{L^1([N/2])} = o(1).
$$
Comparing this with \eqref{fah}, we see that for all $h$ in a very dense subset of $[\delta_{-1} N/8k]$, one has
$$ \| e(P_h) - 1 \|_{L^1([N/2])} = o(1);$$
applying Lemma \ref{pcon2}, we conclude that
$$ \| e(P_h) - 1 \|_{L^1([N-h])} = o(1)$$
and hence that
$$
\| f_h - 1 \|_{L^1([N-h])} = o(1)
$$
or equivalently
$$
\| T^h f - f \|_{L^1([N-h])} = o(1)
$$
for a all $h$ in a very dense subset $H$ of $[-\delta_{-1} N/8k]$.

Observe that if $h, h' \in H$, then by the triangle inequality we have
$$
\| T^{h+h'} f - f \|_{L^1([N-h-h'])} = o(1).
$$
Iterating this a bounded number of times, we conclude that
$$ \|T^h f - f \|_{L^1([N-h])} = o(1)$$
for all $h$ in a very dense subset of $[N]$.  Thus
$$ \E_{x \in \Z/\tilde N\Z} T^h f \overline{f}(x) - T^h 1_{[N]} 1_{[N]}(x) = o(1)$$
for all $h$ in a very dense subset of $[N]$.  By reflection symmetry, we may extend this to all $h$ in a very dense subset of $[-N,N] = \{-N,\ldots,N\}$, and then (by the support properties of $f$) to a very dense subset of $\Z/\tilde N\Z$.  Averaging over $h$, we then see that
$$ \E_{x,h \in \Z/\tilde N\Z} T^h f \overline{f}(x) - T^h 1_{[N]} 1_{[N]}(x) = o(1)$$
which simplifies to
$$ |\E_{x \in \Z/\tilde N\Z} f(x)|^2 - |\E_{x \in \Z/\tilde N\Z} 1_{[N]}(x)|^2 = o(1)$$
and hence
$$ \|f\|_{U^1([N])} = 1 - o(1).$$
The claim now follows from the $k=1$ case of the theorem.

\section{$L^\infty$ near-extremisers on systems}\label{infty-system-sec}

We now prove Theorem \ref{infty-ergodic}.  Unsurprisingly, the arguments will closely follow the proofs of Theorem \ref{infty-group} and Theorem \ref{infty-N}. 

For $k=1$, the proof proceeds precisely as in the previous sections, so we assume inductively that $k \geq 2$ and the claim has already been proven for $k-1$.

As in previous sections, we will just prove the near-extremiser claim, and relax the hypotheses to $\|f\|_{L^\infty(X)} \leq 1$ and $\|f\|_{U^k(X)} \geq 1-o(1)$.  We may assume that $\eps$ is positive but small, as the claim is trivial for large $\eps$.

We call a subset $A$ of the natural numbers \emph{very dense} if the lower density $\liminf_{H \to\infty} \frac{1}{H} |A \cap [H]|$ is $1-o(1)$.

From \eqref{ukp} we have
$$ \|f\|_{L^{p_k}(X)} = 1 - o(1),$$
so by arguing exactly as in Section \ref{infty-group-sec}, we may assume without loss of generality that $|f|=1$ everywhere.

From \eqref{usx} one has
$$ \lim_{H \to \infty} \E_{h \in [H]} \|(T^h f) \overline{f} \|_{U^{k-1}(X)}^{2^{k-1}} = 1-o(1).$$
Since
$$ \|(T^h f) \overline{f} \|_{U^{k-1}(X)} \leq \|(T^h f) \overline{f} \|_{L^\infty(X)} = 1,$$
we conclude from Markov's inequality that
$$ \|(T^h f) \overline{f} \|_{U^{k-1}(X)} = 1 - o(1)$$
for all $h$ in a very dense subset of $\N$.  Applying the induction hypothesis, we see that there exists a very dense subset $A$ of $\N$ such that for all $h \in A$, there exists a polynomial $P_h: X \to \R/\Z$ of degree $\leq k-2$ such that
$$ \|(T^h f) \overline{f} - e(P_h) \|_{L^1(X)} = o(1).$$

As in Section \ref{infty-group-sec}, the next step is to pass from $A$ to all of $\Z$.  Let $h \in \Z$.  As $A$ and $h+A$ both have lower density $1-o(1)$, we thus must have a representation $h=a-b$ for some $a, b \in A$.  By hypothesis, one has
$$ \|(T^a f) \overline{f} - e(P_a) \|_{L^1(X)}, \|(T^b f) \overline{f} - e(P_b) \|_{L^1(X)} = o(1).$$
Since $|f|=1$, we conclude that
$$ \| T^{a-b} f - e(T^{-b} P_a) T^{-b} f \|_{L^1(X)}, \| T^{-b} f - e(-T^{-b} P_b) f \|_{L^1(X)} = o(1)$$
and thus by the triangle inequality
$$ \| T^{a-b} f - e(T^{-b} P_a - T^{-b} P_b) f\|_{L^1(X)} = o(1).$$
The expression $T^{-b} P_a - T^{-b} P_b$ is a polynomial of degree $\leq k-2$.  We have thus shown that for \emph{every} $h \in \Z$, there exists a polynomial $Q_h: X \to \R/\Z$ of degree $\leq k-2$ such that
$$ \|T^h f - e(Q_h) f \|_{L^1(X)} = o(1).$$
Fix a $Q_h$ for each $h \in \Z$.

If $h, h' \in \Z$, then by arguing as in Section \ref{infty-group-sec} one has
\begin{equation}\label{qhhx}
 \| e( Q_{h+h'} - T^h Q_{h'} - Q_h ) - 1\|_{L^2(X)} = o(1).
\end{equation}

By the analogue of Lemma \ref{pcon} for systems (see \cite[Lemma C.1]{bergelson-tao-ziegler}), we conclude that $Q_{h+h'} - T^h Q_{h'} - Q_h$ is constant for every $h,h' \in \Z$, thus
\begin{equation}\label{qhh2x} Q_{h+h'} - T^h Q_{h'} - Q_h = c_{h,h'} \mod 1
\end{equation}
for some $c_{h,h'} \in \R$.  From \eqref{qhhx} we can assume that $c_{h,h'} = o(1)$.

From \eqref{qhh2x} we obtain the $2$-cocycle equation
$$ c_{h,h'} + c_{h+h',h''} = c_{h,h'+h''} + c_{h',h''} \mod 1$$
for all $h,h',h'' \in \Z$;  since the $c_{h,h'} = o(1)$, we can remove the $\mod 1$ projection here and conclude that
$$ c_{h,h'} + c_{h+h',h''} = c_{h,h'+h''} + c_{h',h''} $$
for all $h,h',h'' \in \Z$.

We perform the averaging trick.  As $\Z$ is amenable, there is a translation-invariant mean $\lambda: \ell^\infty(\Z) \to \R$ on the bounded real sequences on $\Z$.  Applying this mean to average in $h''$, we obtain the relation
\begin{equation}\label{chhx}
 c_{h,h'} = b(h) + b(h') - b(h+h')
\end{equation}
for each $h,h' \in \Z$, where $b(h) = o(1)$ depends only on $h$.  Now set $\tilde Q_h := Q_h + b(h)$, then $\tilde Q_h$ is still a polynomial of degree $\leq k-2$, and we still have
\begin{equation}\label{thqx}
 \| T^h f - e(\tilde Q_h) f \|_{L^1(X)} = o(1)
\end{equation}
for all $h \in \Z$.  From \eqref{qhh2x} and \eqref{chhx} we have the cocycle equation
\begin{equation}\label{coc}
 \tilde Q_{h+h'} = T^h \tilde Q_{h'} + \tilde Q_h
\end{equation}
for all $h, h' \in \Z$.

We rewrite \eqref{thqx} as
$$ \| f - e(-\tilde Q_h) T^h f \|_{L^1(X)} = o(1).$$
By the triangle inequality, we thus have
\begin{equation}\label{thof}
 \| f - \E_{h \in [H]} e(-\tilde Q_h) T^h f \|_{L^1(X)} = o(1).
\end{equation}
We now claim that the expression $\E_{h \in [H]} e(-\tilde Q_h) T^h f$ converges in $L^1(X)$ norm to a limit $F$.  This can be seen from the mean ergodic theorem. Indeed, if one considers the cocycle extension $\tilde X := X \times \R/\Z$ with shift $\tilde T: \tilde X \to \tilde X$ defined by
$$ \tilde T( x, \theta ) := (Tx, \theta + \tilde Q_1(x) )$$
then from \eqref{coc} and induction one has
$$ \tilde T^h( x, \theta ) := (T^hx, \theta + \tilde Q_h(x) )$$
for every $h \in \Z$.  If one then lets $\tilde f \in L^\infty(\tilde X)$ be the function
$$ \tilde f( x, \theta ) := f(x) e(-\theta)$$
then we have
\begin{equation}\label{thaf}
 \E_{h \in [H]} \tilde T^h \tilde f(x,\theta) = \E_{h \in [H]} e(-\tilde Q_h(x)) T^h f(x) e(-\theta).
\end{equation}
By the mean ergodic theorem, the left-hand side converges in $L^1(\tilde X)$ to a $\tilde T$-invariant function $\tilde F$, which must then take the form $F(x) e(-\theta)$ by an inspection of the right-hand side of \eqref{thaf}, and the claim follows.

From \eqref{thof} one has
\begin{equation}\label{fF}
\| f - F \|_{L^1(X)} = o(1).
\end{equation}
From the $\tilde T$-invariance of $\tilde F$, one has
\begin{equation}\label{thf}
 T^h F = e( \tilde Q_h ) F
\end{equation}
for all $h \in \Z$.  In particular, $|F|$ is $T$-invariant and thus constant almost everywhere by ergodicity.  One can then write $F = c \cdot e(P)$ almost everywhere for some constant $c$ and some measurable $P: X \to \R/\Z$.  From \eqref{fF} one has $|c|=1-o(1)$, and in particular $c$ is non-zero.  From \eqref{thf} we then have $T^h P - P = \tilde Q_h$ for all $h \in \Z$, and in particular $P$ is polynomial of degree $\leq k-1$.  We now have $\|f-e(P)\|_{L^1(X)} = o(1)$, and the claim follows.

\section{Near-extremisers of the H\"older and Young inequalities}

Let $(X,{\mathcal X},\mu)$ be a measure space.  The H\"older inequality asserts that
$$ \|f_1 \ldots f_m\|_{L^{p}(X)} \leq \|f_1\|_{L^{p_1}(X)} \ldots \|f_m\|_{L^{p_m}(X)}$$
whenever $0 < p_1,\ldots,p_m,p \leq \infty$ are such that $\frac{1}{p_1}+\ldots+\frac{1}{p_m} = \frac{1}{p}$ and $f_i \in L^{p_i}(X)$ for $i=1,\ldots,m$.  If the $p_1,\ldots,p_m,p$ are finite and the $f_i$ are not almost everywhere zero, it is well known that equality occurs if and only if the $|f_i|^{p_i}$ are all constant multiples of each other up to almost everywhere equivalence.

We now establish a stable version of the above assertion:

\begin{lemma}[Near-extremisers of H\"older]\label{nearext}  Let $m \geq 2$, let $(X,{\mathcal X},\mu)$ be a measure space, and let $0 < p_1,\ldots,p_m,p < \infty$ be such that $\frac{1}{p_1}+\ldots+\frac{1}{p_m} = \frac{1}{p}$, and for each $i=1,\ldots,m$, let $f_i \in L^{p_i}(X)$ be such that $\|f_i\|_{L^{p_i}(X)} > 0$, and such that
$$ \|f_1 \ldots f_m\|_{L^{p}(X)} \geq (1-\eps) \|f_1\|_{L^{p_1}(X)} \ldots \|f_m\|_{L^{p_m}(X)}$$
for some $\eps > 0$.  Then there exists a measurable subset $E$ of $X$ with
$$ \int_E |f_i|^{p_i}\ d\mu = o_{m,p_1,\ldots,p_m}( \|f_i\|_{L^{p_i}(X)}^{p_i} )$$
for all $i=1,\ldots,m$, and positive real numbers $c_1,\ldots,c_m > 0$ such that
$$ c_i |f_i(x)|^{p_i} = (1+o_{m,p_1,\ldots,p_m}(1)) |f_1 \ldots f_m(x)|^{p}.$$
for all $x \in X \backslash E$ and $i = 1,\ldots,m$.  In particular, one has
$$ c_i |f_i(x)|^{p_i} = (1+o_{m,p_1,\ldots,p_m}(1)) c_j |f_j(x)|^{p_j}$$
for all $x \in X \backslash E$ and $i,j = 1,\ldots,m$.
\end{lemma}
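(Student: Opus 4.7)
The plan is to pass to the pointwise weighted arithmetic--geometric mean inequality that lies underneath H\"older. By homogeneity, dividing each $f_i$ by $\|f_i\|_{L^{p_i}(X)}$, one may assume $\|f_i\|_{L^{p_i}(X)}=1$ for every $i$; the hypothesis then becomes $\int_X |f_1\cdots f_m|^p\,d\mu \ge (1-\eps)^p = 1 - O_{m,p}(\eps)$. Writing $G_i := |f_i|^{p_i}$ and $r_i := p/p_i$, one has $\int_X G_i\,d\mu = 1$, $\sum_i r_i = 1$, and $|f_1\cdots f_m|^p = \prod_i G_i^{r_i} =: A$. Once the normalized version is established with $c_i = 1$, the general case follows by rescaling with $c_i = \prod_j \|f_j\|_{L^{p_j}(X)}^{p}/\|f_i\|_{L^{p_i}(X)}^{p_i}$.

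The pointwise weighted AM-GM inequality $A(x) = \prod_i G_i(x)^{r_i} \le \sum_i r_i G_i(x)$, valid for every $x$, integrates to the H\"older bound $\int_X A\,d\mu \le 1$. The hypothesis forces $\int_X A\,d\mu \ge 1 - O_{m,p}(\eps)$, so the nonnegative defect $D(x) := \sum_i r_i G_i(x) - A(x)$ satisfies $\int_X D\,d\mu = O_{m,p}(\eps)$. Where $A(x)>0$ one has $G_i(x) = A(x) b_i(x)$ with $\prod_i b_i^{r_i} = 1$ and $D = A\cdot(\sum_i r_i b_i - 1)$. Set
$$E := \{A = 0\} \cup \{x : \textstyle\sum_i r_i b_i(x) - 1 > \sqrt{\eps}\}.$$
Markov's inequality gives $\int_E A\,d\mu = O(\sqrt{\eps})$, and then $\sum_i r_i \int_E G_i\,d\mu = \int_E (A + D)\,d\mu = O(\sqrt{\eps})$, so $\int_E G_i\,d\mu = O_{m,p_1,\ldots,p_m}(\sqrt{\eps}) = o_{m,p_1,\ldots,p_m}(1)$ for every $i$.

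It remains to show that on $X\setminus E$ the relations $\sum_i r_i b_i \le 1 + \sqrt{\eps}$ and $\prod_i b_i^{r_i} = 1$ force $b_i = 1 + o_{m,p_1,\ldots,p_m}(1)$ for each $i$; this then yields $G_i(x) = (1+o(1))A(x)$ on $X\setminus E$, and after unwinding the normalisation the stated conclusion, including the ``in particular'' statement (since both $c_i|f_i|^{p_i}$ and $c_j|f_j|^{p_j}$ equal $(1+o(1))|f_1\cdots f_m|^p$). This stability step is the only delicate point. The sum bound gives immediately a uniform upper bound $b_i \le (1+\sqrt{\eps})/r_i$; the product constraint then rules out any $b_i$ being too small, since $\prod_j b_j^{r_j}=1$ together with the upper bounds on the other $b_j$ forces a lower bound on $b_i$ depending only on the $r_j$. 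Once all $b_i$ lie in a fixed compact subset of $(0,\infty)$, a second-order Taylor expansion of the smooth function $(b_1,\ldots,b_m)\mapsto \sum_i r_i b_i - \prod_i b_i^{r_i}$ around $(1,\ldots,1)$, whose Hessian is positive definite on the tangent space to the hypersurface $\{\prod_i b_i^{r_i} = 1\}$, converts the bound $\sum r_i b_i - 1 \le \sqrt{\eps}$ into the desired pointwise smallness of $b_i - 1$. The main obstacle is thus this quantitative AM-GM stability statement, which is elementary but must be handled carefully to avoid the degeneracies when some $b_i$ approaches $0$ or $\infty$.
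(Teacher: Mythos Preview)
Your proof is correct and follows essentially the same route as the paper: reduce to the pointwise weighted AM--GM inequality underlying H\"older, integrate the nonnegative defect, use a Markov-type argument to carve out an exceptional set $E$ carrying little $L^{p_i}$ mass, and then invoke stability of AM--GM on the complement. The only noteworthy difference is organisational: the paper first inducts down to $m=2$ (and normalises to $p=1$), then handles the two-term stability via convexity of the exponential, whereas you treat general $m$ directly and obtain the stability by a compactness/Hessian argument on the hypersurface $\prod_i b_i^{r_i}=1$ --- both give the same $o_{m,p_1,\ldots,p_m}(1)$ conclusion.
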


To prove this lemma, we first need a simple measure-theoretic lemma:

\begin{lemma}\label{mes}  Let $(X,{\mathcal X},\mu)$ be a measure space, and let $f, g: X \to \R^+$ be non-negative absolutely integrable functions.  Suppose that
$$ \int_X g\ d\mu \leq \eps \int_X f\ d\mu$$
for some $\eps > 0$.  Then there exists a set $E$ with
$$ \int_E f\ d\mu < \sqrt{\eps} \int_X f\ d\mu$$
such that
$$ g(x) \leq \sqrt{\eps} f(x)$$
for all $x \in X \backslash E$.
\end{lemma}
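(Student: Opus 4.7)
The plan is to apply a standard Markov-type (or Chebyshev-type) argument, taking $E$ to be the bad set where the pointwise inequality $g(x) \leq \sqrt{\eps} f(x)$ fails.

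Specifically, I would define
\begin{equation*}
E := \{ x \in X : g(x) > \sqrt{\eps} f(x) \}.
\end{equation*}
Then the pointwise conclusion $g(x) \leq \sqrt{\eps} f(x)$ for $x \in X \setminus E$ is immediate from the definition of $E$, so the only thing to verify is the integral bound on $E$.

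On $E$ we have $f(x) < g(x)/\sqrt{\eps}$ pointwise (in particular $f$ and $g$ cannot both vanish there). Integrating this inequality over $E$ and invoking the hypothesis $\int_X g\, d\mu \leq \eps \int_X f\, d\mu$ gives
\begin{equation*}
\int_E f\, d\mu \leq \frac{1}{\sqrt{\eps}} \int_E g\, d\mu \leq \frac{1}{\sqrt{\eps}} \int_X g\, d\mu \leq \sqrt{\eps} \int_X f\, d\mu,
\end{equation*}
which is the desired estimate (with a non-strict inequality; the strict inequality asserted in the lemma can be recovered by noting that the hypothesis can be assumed strict after infinitesimally shrinking $\eps$, or by handling the degenerate case $\int_X f\, d\mu = 0$ separately with $E = \emptyset$).

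There is no real obstacle here: the argument is a one-line application of Markov's inequality applied to the ratio $g/f$. The only subtle point is the edge case where $\int_X f\, d\mu = 0$, in which case $g$ must also vanish almost everywhere by hypothesis, and the conclusion holds trivially on the complement of a null set.
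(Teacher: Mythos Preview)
Your proof is correct and is essentially identical to the paper's: both define $E := \{x : g(x) > \sqrt{\eps} f(x)\}$ and bound $\int_E f$ via $\int_E f \leq \frac{1}{\sqrt{\eps}} \int_E g \leq \frac{1}{\sqrt{\eps}} \int_X g \leq \sqrt{\eps} \int_X f$. Your remark about the strict-versus-nonstrict inequality and the degenerate case $\int_X f = 0$ is in fact more careful than the paper, which simply asserts the strict inequality without comment.
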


\begin{proof} Set $E := \{ x \in X: g(x) > \sqrt{\eps} f(x) \}$.  Then
$$ \int_E f\ d\mu < \frac{1}{\sqrt{\eps}} \int_X g\ d\mu \leq \sqrt{\eps} \int_X f\ d\mu$$
and the claim follows.
\end{proof}

\begin{proof}[Proof of Lemma \ref{nearext}] We abbreviate $o_{m,p_1,\ldots,p_m}(1)$ as $o(1)$.  By induction on $m$, it suffices to verify the case $m=2$.  By replacing $f_i$ with $|f_i|^p$ for $i=1,2$, we may assume that $p=1$ (thus $1/p_1+1/p_2 = 1$) and that $f_1, f_2$ are non-negative.  By homogeneity we may rescale so that $\|f_1\|_{L^{p_1}(X)} = \|f_2\|_{L^{p_2}(X)} = 1$.  We then have
$$ \int_X f_1 f_2(x)\ d\mu(x) \geq 1-\eps.$$
On the other hand, we have the elementary inequality
$$ \int_X f_1 f_2(x)\ d\mu(x) \leq \int_X \left(\frac{1}{p_1} f_1^{p_1}(x) + \frac{1}{p_2} f_1^{p_2}(x)\right)\ d\mu(x) = 1.$$
We thus have
$$ \int_X \left(\frac{1}{p_1} f_1^{p_1}(x) + \frac{1}{p_2} f_1^{p_2}(x) - f_1 f_2(x)\right)\ d\mu(x) = o(1).$$
As the integrand is non-negative, we can thus apply Lemma \ref{mes} to obtain a measurable set $E$ such that
$$ \int_E \left(\frac{1}{p_1} f_1^{p_1}(x) + \frac{1}{p_2} f_1^{p_2}(x)\right)\ d\mu(x) = o(1)$$
and
\begin{equation}\label{expab}
 \frac{1}{p_1} f_1^{p_1}(x) + \frac{1}{p_2} f_1^{p_2}(x) - f_1 f_2(x) = o\left(\frac{1}{p_1} f_1^{p_1}(x) + \frac{1}{p_2} f_1^{p_2}(x)\right)
 \end{equation}
for all $x \notin E$.  Writing $f_1^{p_1}(x) = \exp(a)$ and $f_2^{p_2}(x) = \exp(b)$, one can rewrite \eqref{expab} as
$$ \exp\left( \frac{1}{p_1} a + \frac{1}{p_2} b \right) = (1-o(1)) \left(\frac{1}{p_1} \exp(a) + \frac{1}{p_2} \exp(b)\right).$$
Using the convexity of the exponential function (and normalising to, say, $\max(a,b)=0$ if desired) we conclude that $a=b+o(1)$, or in other words that $f_1^{p_1}(x) = (1+o(1)) f_2^{p_2}(x)$, and the claim follows.
\end{proof}

We now use the above lemma to analyse near-extremisers to Young's inequality.  We begin by recalling this inequality, together with its proof via H\"older's inequality:

\begin{proposition}[Young's inequality]\label{young}  Let $G = (G,+,{\mathcal B},\mu)$ be a locally compact abelian group, and let $0 < r < p,q < s < \infty$ be such that $\frac{1}{r}+\frac{1}{s}=\frac{1}{p}+\frac{1}{q}$.  Then for any $f \in L^p(G)$ and $g \in L^q(G)$, one has
\begin{equation}\label{young-ineq}
\left (\int_G \|(T^h f) g\|_{L^r(G)}^s\ d\mu(h) \right)^{1/s}\leq \|f\|_{L^p(G)} \|g\|_{L^q(G)}.
\end{equation}
\end{proposition}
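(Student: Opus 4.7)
The plan is to reduce \eqref{young-ineq} to the classical Young convolution inequality on the LCA group $G$. Raising both sides to the $s$-th power and setting $F(x) := |f(x)|^r$, $G_1(x) := |g(x)|^r$, the left-hand side becomes
$$\int_G \left(\int_G F(x-h) G_1(x)\, d\mu(x)\right)^{s/r} d\mu(h) = \|F * \check{G_1}\|_{L^{s/r}(G)}^{s/r},$$
where $\check{G_1}(x) := G_1(-x)$ and $*$ denotes convolution with respect to the Haar measure $\mu$. Here we have used translation-invariance of $\mu$ to perform the change of variable that turns the inner integral into a convolution evaluated at $h$.

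Next, I would invoke the classical Young convolution inequality
$$\|F * \check{G_1}\|_{L^{s/r}(G)} \leq \|F\|_{L^{p/r}(G)} \|\check{G_1}\|_{L^{q/r}(G)},$$
valid under the exponent condition $\tfrac{r}{p} + \tfrac{r}{q} = 1 + \tfrac{r}{s}$, which, upon dividing by $r$, is exactly the hypothesis $\tfrac{1}{p} + \tfrac{1}{q} = \tfrac{1}{r} + \tfrac{1}{s}$. The assumptions $r < p, q$ and $s > r$ ensure $p/r, q/r > 1$ and $s/r \geq 1$, putting all exponents in the admissible range for classical Young. Using $\|F\|_{L^{p/r}(G)} = \|f\|_{L^p(G)}^r$ and, via invariance of Haar measure under $x \mapsto -x$, $\|\check{G_1}\|_{L^{q/r}(G)} = \|g\|_{L^q(G)}^r$, substitution back and taking an $s$-th root yields exactly \eqref{young-ineq}.

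The only piece not yet in the paper is the classical Young convolution inequality itself, but this is standard on any LCA group: the usual three-line proof via three-exponent H\"older, in which one writes $F(y)\check{G_1}(h-y)$ as a product of three factors engineered so that one Hölder application produces precisely the three desired norms, transfers verbatim from $\mathbb{R}^n$ to a general LCA group, using only the translation-invariance of $\mu$ and Fubini. The main (and mild) obstacle is the bookkeeping for the reflection $\check{G_1}$ and the change of variables; once one records that Haar measure on an LCA group is invariant under negation (since the pushforward is a nonzero translation-invariant Radon measure, hence a positive scalar multiple of $\mu$, and the scalar is $1$ by symmetry), the reduction is routine and no genuinely new idea is needed.
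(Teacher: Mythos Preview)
Your argument is correct: after the substitution $F=|f|^r$, $G_1=|g|^r$, the inequality \eqref{young-ineq} is literally the classical Young convolution inequality with exponents $p/r,q/r,s/r$, and the hypothesis $\tfrac{1}{r}+\tfrac{1}{s}=\tfrac{1}{p}+\tfrac{1}{q}$ is exactly the exponent condition needed. The reflection bookkeeping is slightly off (the inner integral is $(\check F * G_1)(h)$ rather than $(F*\check G_1)(h)$), but as you note, negation-invariance of Haar measure makes the $L^{s/r}$ norms equal, so this is harmless.

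The paper takes a marginally different but closely related route: instead of quoting classical Young as a black box, it unpacks its H\"older-based proof directly. It writes
\[
|T^h f|\,|g| \;=\; |T^h f|^{\frac{s-p}{s}}\,|g|^{\frac{s-q}{s}}\,\bigl(|T^h f|^p |g|^q\bigr)^{1/s},
\]
applies three-exponent H\"older pointwise in $h$ to obtain
\[
\|(T^h f)g\|_{L^r(G)}^s \leq \|f\|_{L^p(G)}^{s-p}\|g\|_{L^q(G)}^{s-q}\int_G |T^h f|^p|g|^q\,d\mu,
\]
and then integrates over $h$ using Fubini. Mathematically this is the same three-term H\"older argument you allude to, just inlined rather than cited. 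The practical advantage of the paper's version is that it exposes the pointwise intermediate inequality above, which is exactly what is needed later to analyse near-equality in the inverse Young inequality (Proposition~\ref{iyi}): there one must identify when H\"older is nearly sharp, and having the specific factorisation on record makes Lemma~\ref{nearext} directly applicable. Your reduction is perfectly adequate for the proposition itself, but would have to be unpacked at that later stage.
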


In practice, we will only apply this inequality with the exponents
\begin{equation}\label{pqk}
p = q = p_k = \frac{2^{k}}{k+1}; \quad r = p_{k-1} = \frac{2^{k-1}}{k}; \quad s = 2^{k-1}
\end{equation}
for some $k \geq 2$; one easily verifies that the hypotheses of this proposition are satisfied.

\begin{proof}
Let $h \in G$. From the factorisation
$$
|T^h f| |g| = |T^h f|^{\frac{s-p}{s}} |g|^{\frac{s-q}{s}} (|T^h f(x)|^p |g(x)|^q)^{1/s},$$
H\"older's inequality and the identity
$$ \frac{1}{r} = \frac{s-p}{sp} + \frac{s-q}{sq} + \frac{1}{s}$$
one obtains
\begin{equation}\label{hold}
\| |T^h f| |g| \|_{L^r(G)} \leq \| |T^h f|^{\frac{s-p}{s}} \|_{L^{sp/(s-p)}(G)} \| |g|^{\frac{s-q}{s}} \|_{L^{sq/(s-q)}(G)} \| (|T^h f(x)|^p |g(x)|^q)^{1/s} \|_{L^s(G)}
\end{equation}
which we rearrange as
$$ \|(T^h f) g\|_{L^r(G)}^s \leq \|f\|_{L^p(G)}^{s-p} \|g\|_{L^q(G)}^{s-q} \int_G |T^h f(x)|^p |g(x)|^q\ d\mu(x)$$
for any $h \in G$.  On the other hand, from Fubini's theorem we have
\begin{equation}\label{sauce}
 \int_G \int_G |T^h f(x)|^p |g(x)|^q\ d\mu(x) d\mu(h) = \|f\|_{L^p(G)}^p \|g\|_{L^q(G)}^q,
\end{equation}
and \eqref{young-ineq} follows.
\end{proof}

If $H$ is a compact open subgroup of $G$, and $x_0+H, x'_0+H$ are two cosets of $H$, then a brief calculation shows that equality will hold in \eqref{young-ineq} if $f$ is a scalar multiple of $1_{x_0+H}$, and $g$ is a scalar multiple of $1_{x'_0+H}$.  It was observed in \cite{fournier} that this is essentially the only such example.  More precisely, we have:

\begin{proposition}[Inverse Young inequality]\label{iyi}\cite{fournier}  Let the notation and hypotheses be as in Proposition \ref{young}.  Assume the normalisation
\begin{equation}\label{normp}
 \|f\|_{L^p(G)} = \|g\|_{L^q(G)} = 1
\end{equation}
and assume that
\begin{equation}\label{tfg}
 \left( \int_G \|(T^h f) g\|_{L^r(G)}^s\ d\mu(h)\right)^{1/s} \geq 1-\eps
\end{equation}
for some $\eps > 0$.  Then there exists a compact open subgroup $H$ of $G$ and cosets $x_0+H, x'_0+H$ such that
$$ \left\||f|- \mu(H)^{-1/p} 1_{x_0+H}\right\|_{L^p(G)} = o(1); \quad \left\||g|- \mu(H)^{-1/q} 1_{x'_0+H} \right\|_{L^q(G)} = o_{p,q,r,s}(1).$$
\end{proposition}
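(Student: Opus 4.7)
The plan is to track near-equality through both steps of the Hölder-plus-Fubini proof of Young's inequality (Proposition \ref{young}). Combining the pointwise estimate \eqref{hold} with \eqref{sauce}, the normalization \eqref{normp}, and hypothesis \eqref{tfg} gives
$$ \int_G D(h)\, d\mu(h) = O(\eps), \qquad D(h) := \int_G |T^h f|^p |g|^q\, d\mu(x) - \|(T^h f) g\|_{L^r(G)}^s \geq 0. $$
An application of Lemma \ref{mes} with reference density $h \mapsto \int_G |T^h f|^p |g|^q\, d\mu$ (of total mass $1$) extracts a set $\mathcal H \subseteq G$ carrying $1 - o(1)$ of that mass on which the relative Hölder defect $D(h)/\int_G |T^h f|^p |g|^q$ is $o(1)$.

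For each $h \in \mathcal H$, I would apply Lemma \ref{nearext} to the three-factor Hölder inequality \eqref{hold} at height $h$, with exponents $\tfrac{sp}{s-p}$, $\tfrac{sq}{s-q}$, and $s$. Raising the near-proportionality conclusion to appropriate powers yields a constant $\lambda(h) > 0$ and an exceptional set $E_h \subset G$ of negligible $|T^h f|^p$- and $|g|^q$-mass such that
$$ |T^h f(x)|^p = (1 + o(1))\,\lambda(h)\,|g(x)|^q, \qquad x \notin E_h. $$
The substitution $y = x - h$ rewrites this as $|f(y)|^p \approx \lambda(x-y)\,|g(x)|^q$ on a large subset of $G \times G$, so the ratio $|f(y)|^p/|g(x)|^q$ depends up to $o(1)$ only on $x-y$. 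Fixing two base points $y_1, y_2$ where $|f|^p$ is non-negligible and subtracting the two resulting identities produces the approximate translation quasi-invariance $|g(x + d)|^q \approx c_d\, |g(x)|^q$ for $d = y_1 - y_2$; integrating in $x$ and using translation invariance of $\mu$ forces $c_d \approx 1$ for admissible $d$.

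Hence $|g|^q$ is approximately invariant under translation by every element $d$ lying in the difference set of the effective support of $|f|^p$. The set $H$ of such admissible differences is approximately closed under addition and inversion, and finiteness of $\|g\|_q = 1$ forces it to be a compact open subgroup of $G$; then $|g|^q$ must concentrate on a single coset $x'_0 + H$ at the constant density $\mu(H)^{-1}$, and the symmetric argument gives $|f|^p \approx \mu(H)^{-1}\, \one_{x_0 + H}$. Extracting $p$-th and $q$-th roots yields the claimed $L^p$ and $L^q$ approximations. The principal obstacle is this last structural step---passing from the local quasi-invariance to a genuine compact open subgroup---which is essentially Fournier's original analysis in \cite{fournier}: it uses specific LCA features (existence of compact open subgroups and their finite normalized Haar measure) together with careful bookkeeping of how the exceptional sets $E_h$ depend on $h$, to ensure that the subgroup $H$ extracted from the quasi-invariance of $|g|^q$ coincides with the one governing $|f|^p$.
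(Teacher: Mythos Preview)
Your outline diverges from the paper's proof at the point where Lemma~\ref{nearext} is applied, and the divergence costs you the key simplification. You apply near-H\"older for a large set $\mathcal H$ of shifts and retain only the relation $|T^h f|^p \approx \lambda(h)\,|g|^q$. The paper instead selects a \emph{single} $h$ with small defect (normalised to $h=0$) and exploits the \emph{full} output of Lemma~\ref{nearext}: mutual near-proportionality of all three factors in \eqref{hold} forces $|f|^p$, $|g|^q$, and $|f|^p|g|^q$ to be simultaneous near-multiples of one another off a small set, which algebraically implies $|f|\approx C_1 1_H$ and $|g|\approx C_2 1_H$ for the \emph{same} measurable set $H$. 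Thus the reduction to indicator functions is immediate, and your translation-quasi-invariance argument (with its bookkeeping of the $h$-dependent exceptional sets $E_h$) is not needed.

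Once $|f|,|g|$ are replaced by normalised indicators of $H$, the paper substitutes back into \eqref{tfg} to obtain $\int_G \mu(H\cap(H+h))^{s/r}\,d\mu(h)\ge(1-o(1))\mu(H)^{s/r+1}$, compares with the Fubini identity $\int_G\mu(H\cap(H+h))\,d\mu(h)=\mu(H)^2$, and uses Lemma~\ref{mes} to find a compact $K$ with $\mu(H\cap(H+h))=(1-o(1))\mu(H)$ for all $h\in K$ and $\mu(K-K)=(1+o(1))\mu(K)$. The step you flag as ``the principal obstacle'' is then dispatched by a short inverse sumset lemma (Lemma~\ref{ise}): $\mu(K-K)<\tfrac32\mu(K)$ forces $K-K$ to be a compact open subgroup, and $H$ is then shown to be close to a single coset. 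Your route may well be completable, but as written it leaves this structural step as a black-box citation, whereas the paper supplies an explicit and brief mechanism.
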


This result does not explicitly appear in \cite{fournier}, but follows from the methods in that paper.
For the convenience of the reader we now give the proof of this proposition here.  We first need an inverse sumset estimate:

\begin{lemma}[Inverse sumset estimate]\label{ise} Let $G$ be a compact abelian group with Haar measure $\mu$, and let $K \subset G$ be a compact set of positive measure such that
$$ \mu(K-K) < \frac{3}{2} \mu(K)$$
where $K-K := \{ k-k': k,k' \in K \}$ is the difference set of $K$ with itself.  Then $K-K$ is a compact open subgroup of $G$.
\end{lemma}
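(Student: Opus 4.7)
The strategy is to invoke Kneser's theorem for compact abelian groups, applied to the sumset $K+(-K)=K-K$, and then perform a short coset count.

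I would begin by letting $H := \{g \in G : g + (K-K) = K-K\}$ be the stabiliser of $K-K$; this is a closed subgroup. Kneser's inequality then gives
$$ \mu(K-K) \ge \mu(K+H) + \mu(-K+H) - \mu(H) = 2\mu(K+H) - \mu(H), $$
where we used that $H = -H$ and that Haar measure is inversion-invariant to identify $\mu(-K+H) = \mu(K+H)$. Since $\mu(K+H) \ge \mu(K)$, the hypothesis $\mu(K-K) < \tfrac{3}{2}\mu(K)$ then forces
$$ \mu(H) \ge 2\mu(K+H) - \mu(K-K) > 2\mu(K) - \tfrac{3}{2}\mu(K) = \tfrac{1}{2}\mu(K) > 0. $$
In a compact group, a closed subgroup of positive Haar measure must have finite index (its cosets partition $G$ into finitely many sets of equal positive measure) and hence is open.

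The remaining step is coset counting. Because $H$ stabilises $K-K$, the set $K-K$ is a union of $H$-cosets; let $m \ge 1$ be their number, so $\mu(K-K) = m\mu(H)$. Similarly, $K+H$ is a union of exactly $n$ cosets of $H$, where $n$ is the number of $H$-cosets meeting $K$, so $\mu(K+H) = n\mu(H)$. Kneser's bound reads $m \ge 2n-1$, while the hypothesis, together with $\mu(K) \le \mu(K+H) = n\mu(H)$, gives $m\mu(H) < \tfrac{3}{2}n\mu(H)$, i.e. $m < \tfrac{3n}{2}$. Combining $2n-1 \le m < \tfrac{3n}{2}$ forces $n < 2$, hence $n=1$ and then $m=1$. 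Thus $K$ lies inside a single $H$-coset and $K-K$ is a single $H$-coset; as $0 \in K-K$, this coset must be $H$ itself, so $K - K = H$ is a compact open subgroup.

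The main obstacle is invoking Kneser's theorem in the correct continuous (compact abelian) formulation; once that tool is in hand, everything reduces to elementary arithmetic on coset counts. A minor edge case arises when the Kneser lower bound exceeds $\mu(G)=1$: then $K-K$ is closed of full measure, its open complement has measure zero and is therefore empty, so $K-K = G$ and is trivially a compact open subgroup.
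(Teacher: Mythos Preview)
Your proof is correct, but it takes a genuinely different route from the paper's. The paper gives a completely elementary argument: for $a,b \in K-K$, writing $a = k-k'$ one observes that the translates $K-k$ and $K-k'$ both sit inside $K-K$, so by inclusion--exclusion $\mu(K \cap (K+a)) > \tfrac{1}{2}\mu(K)$; the same holds for $b$, and then a second pigeonhole inside $K+a$ shows $K \cap (K+a+b)$ has positive measure, whence $a+b \in K-K$. This directly establishes that $K-K$ is closed under addition, with no external input beyond translation-invariance of Haar measure.

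By contrast, you invoke the continuous Kneser theorem and reduce the problem to integer arithmetic on coset counts. This is perfectly valid and arguably more structural --- it makes transparent that the lemma is a Kneser-type phenomenon --- but it imports a substantially heavier tool than is needed. The paper's argument is self-contained and in fact \emph{re-proves} the relevant special case of Kneser by hand. One small comment: your edge-case discussion about the Kneser bound exceeding $\mu(G)$ is unnecessary, since if $K-K = G$ then its stabiliser $H$ is all of $G$, and your coset-counting argument already handles $n=m=1$ without modification.
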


\begin{proof}
Let $a$ and $b$ both be elements of $K-K$.  Writing $a$ as $a=k-k'$, we see that $K-k$, $K-k'$ both lie in $K-K$, which has measure less than $\frac{3}{2} \mu(K)$, and so $K-k$ and $K-k'$ must intersect in a set of measure greater than $\frac{1}{2} \mu(K)$; we conclude that $\mu(K \cap (K+a))
> \frac{1}{2} \mu(K)$.  Similarly $\mu(K \cap (K+b)) > \frac{1}{2} \mu(K)$, and hence $\mu((K+a) \cap (K+a+b)) > \frac{1}{2} \mu(K)$.  By the triangle inequality, this forces $\mu(K \cap (K+a+b)) > 0$, and in particular $K \cap (K+a+b)$ is non-empty.  In other words, $a+b \in K-K$.  We thus see that $K-K$ is closed under addition; as it is also symmetric, it is a group.  As $K-K$ is compact and has positive measure, it must also be open, and the claim follows.
\end{proof}

Now we can prove Proposition \ref{iyi}.

\begin{proof}[Proof of Proposition \ref{iyi}] We abbreviate $o_{p,q,r,s}(1)$ as $o(1)$.
We may assume that $\eps$ is small, as the claim is trivial for large $\eps$.  Observe that we may relax the right-hand side of \eqref{tfg} from $1-\eps$ to $1-o(1)$.  We may similarly relax the normalisations \eqref{normp} to
$$
 \|f\|_{L^p(G)}, \|g\|_{L^q(G)} = 1 + o(1).
$$
The point of doing this relaxation is that the hypotheses (and conclusion) of the proposition are now stable under perturbations of $f$ by $o(1)$ in the $L^p$ norm and perturbations of $g$ by $o(1)$ in the $L^q$ norm (the stability of \eqref{tfg} following from Proposition \ref{young}).  We will exploit this freedom to perturb $f, g$ by $o(1)$ in these norms later in this argument.

By replacing $f, g$ with $|f|, |g|$ respectively, we may assume without loss of generality that $f, g$ are non-negative.
Define the non-negative functions $A, B: G \to \R^+$ by the formulae
$$ A(h) := \|(T^h f) g\|_{L^r(G)}^s$$
and
$$ B(h) := \int_G (T^h f(x))^p g(x)^q\ d\mu(x).$$
Clearly $A, B$ are measurable (in fact, they are continuous).  From the proof of Proposition \ref{young} and the hypotheses, we have
$$ 0 \leq A(h) \leq (1+ o(1))B(h)$$
for all $h$, and
\begin{equation}\label{goo}
 1-o(1) \leq \int_G A(h)\ d\mu(h) \leq (1+o(1)) \int_G B(h)\ d\mu(h) \leq 1+o(1).
\end{equation}
Thus there exists $h$ such that
$$ 0 < (1-o(1)) B(h) < A(h) \leq (1+ o(1))B(h).$$
By translating $f$ by $h$ if necessary (which does not materially affect either the hypotheses or conclusions of Proposition \ref{iyi}), we may assume without loss of generality that $h=0$.  Thus we have
\begin{equation}\label{fgo}
 \| f g \|_{L^r(G)} > (1-o(1)) \| f^{\frac{s-p}{s}} \|_{L^{sp/(s-p)}(G)} \| g^{\frac{s-q}{s}} \|_{L^{sq/(s-q)}(G)} \| (f^p g^q)^{1/s} \|_{L^s(G)}.
\end{equation}
Applying Lemma \ref{nearext}, we can thus find a measurable set $E$ with
$$ \int_E f^p\ d\mu, \int_E g^q\ d\mu = o(1)$$
and constants $c_1, c_2, c_3 > 0$ such that
\begin{align*}
f^p(x) &= (1+o(1)) c_1 f^r(x) g^r(x) \\
g^q(x) &= (1+o(1)) c_2 f^r(x) g^r(x) \\
f^p(x) g^q(x) &= (1+o(1)) c_3 f^r(x) g^r(x)
\end{align*}
for all $x \in G \backslash E$.  By modifying $f, g$ by $o(1)$ in the $L^p(G), L^q(G)$ norms respectively we may assume that $f, g$ vanish on $E$, so that the above estimates now hold for all $x$.  In particular, $f$ and $g$ are supported on the same set $H$.  Solving for $f, g$, we conclude that there exist constants $C_1, C_2 > 0$ such that
$$ f(x) = (1+o(1)) C_1 1_H(x); \quad g(x) = (1+o(1)) C_2 1_H(x)$$
for all $x \in G$.  Using the normalisation of $f, g$, and modifying $f, g$ by $o(1)$ in the $L^p(G), L^q(G)$ norms respectively, we may assume that $H$ has positive measure and
$$ f = \mu(H)^{-1/p} 1_H; \quad g = \mu(H)^{-1/q} 1_{H}.$$

Inserting this back into \eqref{goo}, we have after some algebra that
$$
 \int_G \mu(H \cap (H+h))^{s/r}\ d\mu(h) \geq (1-o(1)) \mu(H)^{s/r + 1}.
$$
On the other hand, from Fubini's theorem we have
\begin{equation}\label{fub}
 \int_G \mu(H \cap (H+h))\ d\mu(h) = (1-o(1)) \mu(H)^2
\end{equation}
and thus
$$
 \int_G \mu(H \cap (H+h)) (\mu(H)^{s/r - 1} - \mu(H \cap (H+h))^{s/r-1} ) \ d\mu(h) = o(\mu(H)^{s/r+1}).$$
Applying Lemma \ref{mes}, we can find a measurable set $K$ such that
$$ \int_{G \backslash K} \mu(H \cap (H+h))\ d\mu(h) = o(\mu(H)^2)$$
and such that
$$ \mu(H \cap (H+h)) (\mu(H)^{s/r - 1} - \mu(H \cap (H+h))^{s/r-1} ) = o(\mu(H)^{s/r - 1} \mu(H \cap (H+h)))$$
for all $h \in K$.  Discarding those $h$ for which $\mu(H \cap (H+h)) = 0$, we may thus assume that
$$ \mu(H \cap (H+h)) = (1-o(1)) \mu(H)$$
for all $h \in K$.  By continuity of the function $h \mapsto \mu(H \cap (H+h))$, we may take $K$ to be compact.  Integrating the above bound in $h$ and using \eqref{fub}, we have
$$ \mu(K) = (1+o(1)) \mu(H).$$
Also, if $h, h' \in K$, then
$$ \mu(H \cap (H+h)), \mu((H+h) \cap (H+h-h')) = (1-o(1)) \mu(H)$$
and hence by the triangle inequality
$$ \mu(H \cap (H+h-h')) = (1-o(1)) \mu(H).$$
Thus, for every $k$ in the difference set $K-K$ (which is also compact), one has
$$ \mu(H \cap (H+k)) = (1-o(1)) \mu(H).$$
Integrating this in $k$ and using \eqref{fub}, we conclude that
$$ \mu(K-K) = (1 + o(1)) \mu(H) = (1+o(1)) \mu(K).$$
By Lemma \ref{ise}, we conclude that $H_0 := K-K$ is a compact open subgroup of $G$.  By the above discussion, we have $\mu(H \cap (H+h)) = (1-o(1)) \mu(H)$ for all $h \in H_0$.  Averaging in $h$, we conclude that $1_H$ and $\frac{1}{\mu(H_0)} 1_{H_0} * 1_H$ differ by at most $o(\mu(H))$ in $L^1$ norm.  As the latter function is constant along cosets of $H_0$, and the former function takes values $0$ and $1$ and has a total $L^1$ norm of $\mu(H) = (1+o(1)) \mu(H_0)$, both functions must differ from the indicator function of a single coset by $o(\mu(H_0))$ in $L^1$ norm.  In other words, there is a coset $x+H_0$ of $H_0$ such that $\mu(H \cap (x+H_0)) = (1-o(1)) \mu(H_0)$, and thus $\mu(H \backslash (x+H_0)) = o(1)$.  Thus, after modifying $f, g$ by $o(1)$ in $L^p$ and $L^q$ norm respectively, we may assume that $H = x+H_0$, and the claim follows.
\end{proof}

\section{$L^p$ near-extremisers on groups}\label{p-group-sec}

We can now quickly prove Theorem \ref{p-group}.  Again, we only establish the claim for $\eps > 0$.  Fix $k \geq 2$ and $G$.  It suffices to show that if $f \in L^{p_k}(G)$ is such that $\|f\|_{L^{p_k}(G)} \leq 1+o(1)$ and $\|f\|_{U^k(G)} \geq 1-o(1)$, then there exists a coset $H = x_0+H_0$ and a polynomial $P: H_0 \to \R/\Z$ of degree $\leq k-1$ such that $\|f-\mu(H)^{-1/p_k} 1_H e(P(\cdot-x_0)) \|_{L^{p_k}(G)} = o(1)$.

From \eqref{fk}, \eqref{ukg2} one has
$$ \int_G \|(T^h f) \overline{f} \|_{L^{p_{k-1}}(G)}^{2^{k-1}}\ d\mu(h) \geq 1-o(1).$$
Applying Proposition \ref{iyi} we conclude that $\| |f| - \mu(H)^{-1/p_k} 1_{x_0+H} \|_{L^{2^k/(k+1)}(G)} = o(1)$ for some coset $x_0+H$ of a subgroup $H$ of $G$.  By translating, we may set $x_0=0$; by modifying $f$ by $o(1)$ in $L^{2^k/(k+1)}(G)$ norm, we may assume that
$$ |f| = \mu(H)^{-1/p_k} 1_{H}.$$
By dividing the Haar measure $\mu$ by $\mu(H)$ (and multiplying $f$ by $\mu(H)^{1/p_k}$) we may normalise so that $\mu(H)=1$, thus we now have $|f|=1_H$.  At this point we may restrict $f$ to $H$, and the claim follows from Theorem \ref{infty-group}.

\section{Ergodic theory analogues}

We can now adapt Proposition \ref{young} and Proposition \ref{iyi} to ergodic systems:

\begin{proposition}[Young's inequality for ergodic systems]\label{young-ergodic}  Let $X = (X, {\mathcal X}, \mu, T)$ be an ergodic system, and let $0 < r < p,q < s < \infty$ be such that $\frac{1}{r}+\frac{1}{s}=\frac{1}{p}+\frac{1}{q}$.  Then for any $f \in L^p(X)$ and $g \in L^q(X)$, one has
\begin{equation}\label{lhs}
 \limsup_{H \to\infty} (\E_{h \in [H]} \| (T^h f) g \|_{L^r(X)}^s)^{1/s} \leq \|f\|_{L^p(X)} \|g\|_{L^q(X)}.
\end{equation}
\end{proposition}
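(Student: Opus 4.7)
My plan is to mimic the proof of Proposition \ref{young} in the LCA setting, replacing the Haar integral over $G$ in the variable $h$ by the Ces\`aro average $\E_{h \in [H]}$ and taking $\limsup_{H \to \infty}$. The only step in that proof which genuinely used translation invariance is the Fubini identity \eqref{sauce}; in the ergodic setting this will be replaced by Birkhoff's pointwise ergodic theorem together with Scheff\'e's lemma.

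First I would run the pointwise H\"older step from Proposition \ref{young} verbatim: the factorisation $|T^h f||g| = |T^h f|^{(s-p)/s} |g|^{(s-q)/s} (|T^h f|^p |g|^q)^{1/s}$, together with H\"older applied at the exponents $sp/(s-p)$, $sq/(s-q)$, $s$, gives for each $h \in \Z$
\[
\|(T^h f) g\|_{L^r(X)}^s \leq \|f\|_{L^p(X)}^{s-p} \|g\|_{L^q(X)}^{s-q} \int_X |T^h f|^p |g|^q\, d\mu.
\]
Averaging over $h \in [H]$ and applying Fubini reduces \eqref{lhs} to the estimate
\[
\limsup_{H \to \infty} \int_X |g|^q \phi_H \,d\mu \leq \|f\|_{L^p(X)}^p \|g\|_{L^q(X)}^q, \qquad \phi_H := \E_{h \in [H]} T^h(|f|^p).
\]

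Birkhoff's theorem applied to $|f|^p \in L^1(X)$, combined with the ergodicity of $T$, yields $\phi_H \to \|f\|_{L^p(X)}^p$ pointwise $\mu$-almost everywhere, while measure-preservation produces the exact identity $\int_X \phi_H \, d\mu = \|f\|_{L^p(X)}^p$ for every $H$. Scheff\'e's lemma then upgrades these two facts to $L^1(X)$-convergence $\phi_H \to \|f\|_{L^p(X)}^p$. When $f$ also lies in $L^\infty(X)$, the $\phi_H$ are uniformly dominated by $\|f\|_{L^\infty(X)}^p$, and dominated convergence with integrable majorant $\|f\|_{L^\infty(X)}^p |g|^q$ closes the argument at once.

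The main technical obstacle is that for a general $f \in L^p(X)$ the $L^1$-convergence of $\phi_H$ need not pair with the merely $L^1$ function $|g|^q$ to give convergence of $\int_X |g|^q \phi_H \, d\mu$. I would overcome this by truncation: set $f_M := f \cdot 1_{|f| \leq M}$, so that $f_M \in L^p(X) \cap L^\infty(X)$ and $\|f - f_M\|_{L^p(X)} \to 0$ as $M \to \infty$. In all regimes relevant to the applications one has $s \geq 1$ (indeed $s = 2^{k-1}$ at the exponents \eqref{pqk}), so Minkowski's inequality makes $f \mapsto (\E_{h \in [H]} \|(T^h f) g\|_{L^r(X)}^s)^{1/s}$ subadditive; combining this with the symmetry $\|(T^h f) g\|_{L^r(X)} = \|f \cdot T^{-h} g\|_{L^r(X)}$ coming from measure-preservation (so that after a parallel truncation $g_N := g \cdot 1_{|g| \leq N}$ one may apply the bounded-case estimate in either slot), a standard approximation with $M, N \to \infty$ yields the inequality for arbitrary $f \in L^p(X)$ and $g \in L^q(X)$.
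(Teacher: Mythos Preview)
Your approach is essentially the paper's: apply the pointwise H\"older factorisation to get $\|(T^h f)g\|_{L^r}^s \leq \|f\|_p^{s-p}\|g\|_q^{s-q}\int_X |T^h f|^p|g|^q\,d\mu$, then invoke the ergodic theorem to handle the average over $h$. The paper simply writes ``from the ergodic theorem one has $\lim_{H\to\infty}\E_{h\in[H]}\int|T^h f|^p|g|^q\,d\mu = \|f\|_p^p\|g\|_q^q$'' and stops; you have unpacked this via Birkhoff, Scheff\'e, and dominated convergence in the bounded case, which is more careful than the paper.

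One small caveat on your extension to general $f,g$: the subadditivity you invoke needs $r\geq 1$ as well as $s\geq 1$ (for $\|\cdot\|_{L^r}$ to be a norm), and more importantly the ``standard approximation with $M,N\to\infty$'' does not quite close as written --- after splitting off $f_M$ and $g_N$ you are left with a term $A_H(f-f_M,\,g-g_N)$ in which neither factor is bounded, and neither the bounded-$f$ nor the bounded-$g$ case applies to it. The paper does not supply these details either, so this is not a divergence from the paper's argument, but it is worth being aware that the passage from $L^\infty$ to $L^p$ here is not entirely trivial.
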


\begin{proof} Repeating the proof of Proposition \ref{young}, one has
$$ \|(T^h f) g\|_{L^r(X)}^s \leq \|f\|_{L^p(X)}^{s-p} \|g\|_{L^q(X)}^{s-q} \int_X |T^h f(x)|^p |g(x)|^q\ d\mu(x)$$
for any $h \in \Z$.  On the other hand, from the ergodic theorem one has
$$
\lim_{H \to \infty} \E_{h \in [H]} \int_G |T^h f(x)|^p |g(x)|^q\ d\mu(x) = \|f\|_{L^p(G)}^p \|g\|_{L^q(G)}^q,
$$
and the claim follows.
\end{proof}

\begin{proposition}[Inverse Young inequality]\label{iyi-erg} Let the notation and hypotheses be as in Proposition \ref{young-ergodic}.  Assume the normalisation
\begin{equation}\label{normpa}
 \|f\|_{L^p(X)} = \|g\|_{L^q(X)} = 1
\end{equation}
and assume that
\begin{equation}\label{tfga}
\limsup_{H \to \infty} \left( \E_{h \in [H]} \int_X \|(T^h f) g\|_{L^r(X)}^s\right)^{1/s} \geq 1-\eps
\end{equation}
for some $\eps > 0$.  Then there exists a coset $H$ of $X$ and an integer $h$ such that
$$ \||f|- \mu(H)^{-1/p} 1_{H}\|_{L^p(X)} = o_{p,q,r,s}(1); \quad \||g|- \mu(H)^{-1/q} 1_{T^h H} \|_{L^q(G)} = o_{p,q,r,s}(1).$$
\end{proposition}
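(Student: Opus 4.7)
The plan is to adapt the proof of the LCA group analogue Proposition \ref{iyi}, replacing the Haar average over $G$ by the ergodic average $\E_{h \in [N]}$, and replacing the sumset identification of a compact open subgroup (Lemma \ref{ise}) by an ergodic-theoretic identification of a stabiliser subgroup of $\Z$. First, as in Proposition \ref{young-ergodic}, factorising $|T^h f \cdot g| = |T^h f|^{(s-p)/s}\, |g|^{(s-q)/s}\, (|T^h f|^p |g|^q)^{1/s}$ and applying H\"older yields $A(h) := \|(T^h f) g\|_{L^r(X)}^s \leq B(h) := \int_X |T^h f|^p |g|^q\, d\mu$ pointwise, while the mean ergodic theorem together with ergodicity of $T$ gives $\lim_{N \to \infty} \E_{h \in [N]} B(h) = \|f\|_{L^p}^p\|g\|_{L^q}^q = 1$. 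The hypothesis \eqref{tfga} then forces $\limsup \E_{h \in [N]} A(h) \geq (1-\eps)^s$, so along some $N_0 \to \infty$ we obtain $\E_{h \in [N_0]}(B(h) - A(h)) = o(1)$; by Markov together with $\E A(h) \to 1$, there is some $h_* \in [N_0]$ with $A(h_*) \geq (1-o(1))\,B(h_*)$ and $A(h_*)$ bounded away from zero. Since replacing $f$ by $T^{h_*} f$ leaves the hypothesis asymptotically invariant and only shifts the integer $h$ appearing in the conclusion, we may assume $h_* = 0$, so H\"older for $\|fg\|_{L^r(X)}$ is $(1-o(1))$-saturated.

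Next, I apply Lemma \ref{nearext} to the three factors $|f|^{(s-p)/s}, |g|^{(s-q)/s}, (|f|^p |g|^q)^{1/s}$ with exponents $sp/(s-p), sq/(s-q), s$. This produces an exceptional set $E \subseteq X$ with $\int_E |f|^p\, d\mu, \int_E |g|^q\, d\mu = o(1)$ and positive constants $c_1, c_2, c_3$ such that on $X \setminus E$ the three quantities $c_1 |f|^p, c_2 |g|^q, c_3 |f|^p |g|^q$ are all $(1+o(1))$-proportional to $|f|^r |g|^r$. Combining these relations and solving pointwise, exactly as in the group case, forces both $|f|$ and $|g|$ to be constant on a common set $H \subseteq X$ and zero on its complement. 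Absorbing $E$ into $o(1)$-perturbations of $f$ and $g$ in $L^p$ and $L^q$ respectively, the normalisation \eqref{normpa} gives $|f| = \mu(H)^{-1/p} 1_H$ and $|g| = \mu(H)^{-1/q} 1_H$.

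The principal difficulty is verifying that $H$ is a coset in the sense of Definition \ref{cosets}. Repeating the previous Lemma \ref{nearext} step for every $h \in [N_0]$ with $A(h) \geq (1-o(1))\,B(h)$ and $B(h) > 0$ forces $\mu(T^h H \triangle H) = o(\mu(H))$, i.e.\ $T^h H = H$ modulo null sets (after refining $H$ on a null set if needed). The set of such good $h$ has density at least $\mu(H) - o(1)$ in $[N_0]$: indeed $\{h : B(h) > 0\}$ has density at least $\mu(H)(1+o(1))$ since $\E B = 1+o(1)$ and $B \leq \mu(H)^{-1}$ pointwise, while Markov on $\E(B-A) = o(1)$ removes only an $o(1)$-density of bad $h$. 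Hence the stabiliser subgroup $D := \{d \in \Z : T^d H = H \bmod \mu\}$ equals $m\Z$ for some positive integer $m$ with $m \mu(H) \leq 1$. The reverse inequality $m\mu(H) \geq 1$ follows from ergodicity: the $T$-invariant set $\bigcup_{i=0}^{m-1} T^i H$ has positive measure and hence full measure. Consequently the $T$-invariant function $F := \sum_{i=0}^{m-1} 1_{T^i H}$ is a.e.\ constant by ergodicity, a.e.\ at least $1$ by the previous sentence, and integrates to $m\mu(H) = 1$; thus $F \equiv 1$ a.e., which is exactly the required partition property. Therefore $H$ is a coset of index $m$, and undoing the earlier translation by $h_*$ recovers the integer $h$ asserted in the statement.
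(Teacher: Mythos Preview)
Your first two paragraphs are fine and match the paper: the H\"older near-saturation argument via Lemma~\ref{nearext} correctly reduces to $|f| = \mu(H)^{-1/p} 1_H$ and $|g| = \mu(H)^{-1/q} 1_H$ for some measurable set $H$ of positive measure.

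The gap is in the third paragraph. From near-saturation of H\"older at a good shift $h$ you correctly deduce $\mu(T^h H \triangle H) = o(\mu(H))$, but this is \emph{not} the same as $T^h H = H$ modulo null sets. The quantity $o(\mu(H))$ here is a fixed small positive number depending on $\eps$ (say of order $\eps^{1/2}\mu(H)$), not zero; no null-set refinement of $H$ will eliminate it. Consequently the exact stabiliser $D = \{d \in \Z : T^d H = H \bmod \mu\}$ may well be trivial, and your subsequent index computation collapses. Concretely, with $|f|,|g|$ already equal to the indicator data one has $A(h)/B(h) = (\mu(T^h H \cap H)/\mu(H))^{s/r-1}$, so good $h$ merely forces $\mu(T^h H \cap H)/\mu(H)$ close to $1$, never equal to $1$. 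The approximate stabiliser $\{h : \mu(T^h H \triangle H) < \delta\mu(H)\}$ is symmetric and has the right density but is only closed under addition up to doubling $\delta$, which is not enough to extract a genuine subgroup of $\Z$ by an elementary argument.

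The paper closes this gap differently: from the same reduction it extracts $\limsup_N \E_{h\in[N]} \mu(H\cap T^h H)^{s/r} \geq (1-o(1))\mu(H)^{s/r+1}$, combines this with the ergodic-theorem identity $\lim_N \E_{h\in[N]} \mu(H\cap T^h H) = \mu(H)^2$ and the trivial bound $\mu(H\cap T^h H)\leq\mu(H)$ to get $\|1_H\|_{U^2(X)} \geq (1-o(1))\mu(H)^{3/4}$, and then invokes the Kronecker factor $Z_1$: since $\|1_H\|_{U^2(X)} = \|\E(1_H\mid Z_1)\|_{U^2(X)} \leq \|\E(1_H\mid Z_1)\|_{L^{4/3}}$, a short H\"older/Pythagoras computation shows $1_H$ is $o(\mu(H)^{1/2})$-close in $L^2$ to a $Z_1$-measurable set. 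One then restricts to $Z_1$, where the Gowers--Host--Kra and Gowers $U^2$ norms coincide on a compact abelian group, and applies the already-proven LCA case (Theorem~\ref{p-group}) to conclude that $H$ is $o(\mu(H))$-close to a group-theoretic coset of $Z_1$, which is exactly a coset of $X$ in the sense of Definition~\ref{cosets}. This Kronecker-factor step is the missing structural ingredient that your direct stabiliser argument cannot supply.
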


\begin{proof}  As before, we abbreviate $o_{p,q,r,s}(1)$ as $o(1)$, assume $\eps$ is small, and relax the right-hand sides of \eqref{normpa}, \eqref{tfga} to $1 + o(1)$ and $1-o(1)$ respectively.  Repeating the proof of Proposition \ref{iyi}, we may reduce to the case where
$$ f = \mu(H)^{-1/p} 1_H; \quad g = \mu(H)^{-1/q} 1_{H}$$
for some measurable subset $H$ of $X$ of positive measure.  To finish the proof, it suffices to show that $H$ differs from a coset by a set of measure $o(\mu(H))$.

Arguing as in the proof of Proposition \ref{iyi}, we have
$$
\limsup_{N \to \infty} \E_{h \in [N]}\mu(H \cap T^h H)^{s/r} \geq (1-o(1)) \mu(H)^{s/r + 1},$$
while from the ergodic theorem one has
$$
\lim_{N \to \infty} \E_{h \in [N]}\mu(H \cap T^h H) = \mu(H)^2.$$
Also, $\mu(H \cap T^h H))$ is trivially bounded above by $\mu(H)$.  We conclude that
$$
\limsup_{N \to \infty} \E_{h \in [N]}\mu(H \cap T^h H)^2 \geq (1-o(1)) \mu(H)^3,$$
which by Definition \ref{semidef} implies that
\begin{equation}\label{u2}
 \|1_H\|_{U^2(X)} \geq (1-o(1)) \mu(H)^{3/4}.
\end{equation}
To use this, we introduce the theory of the \emph{Kronecker factor}.
Let ${\mathcal Z}_1$ be the sub-$\sigma$-algebra of ${\mathcal X}$
generated by the eigenfunctions of $X$ (or equivalently, by the pure
point spectrum of $T$).  As is well known (see e.g. \cite{host-kra}),
the \emph{Kronecker factor} $(X, {\mathcal Z}_1,
\mu\downharpoonright_{{\mathcal Z}_1}, T)$ is equivalent to an ergodic
rotation $Z_1 = (Z_1, {\mathcal Z}_1, \mu_1, T_1)$, i.e. a compact
abelian group $Z_1 = (Z_1,+)$ with Haar measure $\mu_1$ and an ergodic
shift $T_1: z \mapsto z+\alpha$ for some $\alpha \in Z_1$.  In
\cite[Lemma 4.3]{host-kra}, it is shown that $\|f\|_{U^2(X)}=0$
whenever the function $f \in L^\infty(X)$ is such that the conditional expectation $\E(f|Z_1)$ vanishes.  In particular,
$$ \|1_H - \E(1_H|Z_1)\|_{U^2(X)} = 0$$
and thus by the triangle inequality
$$ \|1_H \|_{U^2(X)} = \| \E(1_H|Z_1)\|_{U^2(X)}.$$
Applying \eqref{u2}, \eqref{ukp2} we conclude that
$$ \| \E(1_H|Z_1)\|_{L^{4/3}(X)} \geq (1-o(1)) \mu(H)^{3/4}.$$
Since
$$ \| \E(1_H|Z_1)\|_{L^1(X)} = \| 1_H \|_{L^1(X)} = \mu(H)$$
we conclude from H\"older's inequality that
$$  \| \E(1_H|Z_1)\|_{L^2(X)}^2 \geq (1-o(1)) \mu(H).$$
Since
$$  \| 1_H \|_{L^2(X)}^2 = \mu(H)$$
we conclude from Pythagoras' theorem that
$$ \| 1_H - \E(1_H|Z_1) \|_{L^2(X)}^2 = o( \mu(H) ).$$
As a consequence there is a $Z_1$-measurable subset $H'$ that differs from $H$ by a set of measure $o(\mu(H))$.  Thus, without loss of generality, we may assume that $H=H'$, i.e. that $H$ is $Z_1$-measurable.  We may thus restrict $X$ to the Kronecker factor $Z_1$ and assume without loss of generality that $X=Z_1$.  By \cite[\S 3.2]{host-kra}, the Gowers-Host-Kra seminorm $U^2(Z_1)$ on the Kronecker factor $Z_1$ coincides with the Gowers uniformity norm $U^2(Z_1)$ on the compact abelian group $Z_1$.  Applying Theorem \ref{p-group}, we see that $H$ differs from a coset by a set of measure $o(\mu(H))$, as required.
\end{proof}

\section{$L^p$ near-extremisers on systems}\label{p-system-sec}

Now we establish Theorem \ref{p-system}.
Again, we only establish the claim for $\eps > 0$.  Fix $k \geq 2$ and $X$.  It suffices to show that if $f \in L^{p_k}(X)$ is such that $\|f\|_{L^{p_k}(X)} \leq 1+o(1)$ and $\|f\|_{U^k(X)} \geq 1-o(1)$, then there exists a coset $H$ and a polynomial $P: H \to \R/\Z$ of degree $\leq k-1$ such that $\|f-\mu(H)^{-1/p_k} 1_H e(P(\cdot-x_0)) \|_{L^{p_k}(X)} = o(1)$.

From \eqref{usx}, \eqref{ukp} one has
$$ \lim_{H \to \infty} \E_{h \in [H]} \|(T^h f) \overline{f} \|_{L^{p_{k-1}}(G)}^{2^{k-1}} \geq 1-o(1).$$
Applying Proposition \ref{iyi-erg} we conclude that $\| |f| - \mu(H)^{-1/p_k} 1_{H} \|_{L^{2^k/(k+1)}(X)} = o(1)$ for some coset $H$ of $X$. By modifying $f$ by $o(1)$ in $L^{2^k/(k+1)}(X)$ norm, we may assume that
$$ |f| = \mu(H)^{-1/p_k} 1_{H}.$$
If we then replace the system $X$ with the normalisation of the coset $H$ (as defined in Definition \ref{cosets}), and set $\tilde f: H \to \C$ be the function $\tilde f(x) := \mu(H)^{1/p_k} f(x)$ for $x \in H$, we see (as in the introduction) that
$$ \|f\|_{L^{p_k}(X)} = \| \tilde f\|_{L^{p_k}(H)}$$
and
$$ \|f\|_{U^k(X)} = \| \tilde f\|_{U^k(H)}.$$
We may thus pass from $X$ to $H$, and the claim now follows from Theorem \ref{infty-ergodic}.

\begin{remark}\label{coset}  By combining Theorem \ref{p-system} with Theorem \ref{p-group} we see that a measurable subset $H$ of an ergodic system $X$ is a coset of $X$ (in the ergodic theory sense) if and only if $H$ is a coset of the Kronecker factor $Z_1$ (in the group theory sense), modulo null sets.  Indeed, the ``if'' part is easy.  To see the ``only if'' part, observe that if $H$ is a coset then $1_H$ can be decomposed as a linear combination of eigenfunctions $\sum_{j=1}^m e(jk/m) 1_{T^j H}$ of $T$, and so $H$ is measurable with respect to the Kronecker factor, which is a compact abelian group.  By Remark \ref{uni-kra}, the Gowers and Gowers-Host-Kra norms of $1_H$ then coincide, and the claim follows from Theorem \ref{p-system} and Theorem \ref{p-group}.  Alternatively, one can observe from the density of the orbit $\{ n \alpha: n \in \Z \}$ of the ergodic shift $\alpha \in Z_1$ in $Z_1$ that for every $\theta \in Z_1$, $H+\theta$ is either equal to $H$ or is disjo
 int from $H$ modulo null sets, which then implies that $H$ is a coset of $Z_1$ in the group-theoretic sense up to null sets.
\end{remark}

\section{The Gowers norm on Euclidean spaces}\label{syg-sec}

We now prove Theorem \ref{syg}.  Our main tool is the following refinement of Proposition \ref{young} in the Euclidean case, due to
Beckner \cite{beckner} and Brascamp and Lieb \cite{brascamp}:

\begin{theorem}[Sharp Young's inequality]\label{young-beck}\cite{beckner}, \cite{brascamp}  Let $n \geq 1$ be an integer, and let $0 < r < p,q < s < \infty$ be such that $\frac{1}{r}+\frac{1}{s}=\frac{1}{p}+\frac{1}{q}$.  Then for any $f \in L^p(\R^n)$ and $g \in L^q(\R^n)$, one has
\begin{equation}\label{young-ineq-sharp}
 \left(\int_{\R^n} \|(T^h f) g\|_{L^r(\R^n)}^s\ dh \right)^{1/s}\leq
 (A_{p/r} A_{q/r} / A_{s/r})^{n/r}
 \|f\|_{L^p(\R^n)} \|g\|_{L^q(\R^n)}
\end{equation}
where $A_p := \left(\frac{p^{1/p}}{(p')^{1/p'}}\right)^{1/2}$ and $1/p + 1/p' = 1$.  If $f,g$ are non-negative, then equality occurs if and only if one has $f = c_0 e^{-p (x-x_0) \cdot M(x-x_0)}$ and $g = c_1 e^{-q(x-x_1) \cdot M(x-x_1)}$ for some $c_0, c_1 \in \C$, $x_0, x_1 \in \R^n$, and positive definite $M$.
\end{theorem}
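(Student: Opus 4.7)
The plan is to establish the Sharp Young inequality by reducing it to the trilinear Brascamp--Lieb form, invoking Lieb's theorem to restrict to centred Gaussian extremisers, and then carrying out an explicit Gaussian optimisation.

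First, I would translate the statement into convolution form. With $F := |f|^r$ and $G := |g|^r$, one has
\begin{equation*}
\|(T^h f) g\|_{L^r(\R^n)}^r = \int_{\R^n} F(x-h) G(x) \, dx = (F \star G)(h),
\end{equation*}
where $(F \star G)(h) := \int F(x-h) G(x)\, dx$ is the cross-correlation; since cross-correlation is convolution with a reflected second factor, and reflection preserves $L^p$ norms, \eqref{young-ineq-sharp} becomes, after raising to the $r$-th power, the sharp Young convolution estimate
\begin{equation*}
\|F * \check{G}\|_{L^{s/r}(\R^n)} \leq (A_{p/r} A_{q/r} / A_{s/r})^{n} \|F\|_{L^{p/r}(\R^n)} \|G\|_{L^{q/r}(\R^n)},
\end{equation*}
with exponents satisfying $\frac{r}{p} + \frac{r}{q} = 1 + \frac{r}{s}$. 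Dualising against an arbitrary non-negative $H \in L^{(s/r)'}(\R^n)$ then converts this into a trilinear Brascamp--Lieb functional in $\R^{2n}$ attached to the three rank-$n$ surjections $(x,y) \mapsto y$, $(x,y) \mapsto x-y$, $(x,y) \mapsto x$, with scaling identity $\frac{r}{p} + \frac{r}{q} + \frac{1}{(s/r)'} = 2$.

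Second, I would reduce to centred Gaussians in one dimension. Since Lebesgue measure, the coordinate surjections, and Gaussian trial functions all factorise across coordinate axes, the $n$-dimensional sharp constant is the $n$-th power of the one-dimensional constant, so it suffices to take $n=1$. Then Lieb's theorem on Brascamp--Lieb inequalities states that, whenever the supremum of such a trilinear functional is finite, it is approached (and, in the present non-degenerate setup, exactly attained) by centred Gaussian trials $F(y) = e^{-\alpha y^2}$, $\check{G}(z) = e^{-\beta z^2}$, $H(x) = e^{-\gamma x^2}$ for some $\alpha, \beta, \gamma > 0$; moreover, the only extremisers are such Gaussians, up to the inherent symmetries of translation and joint scaling.

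Third, I would carry out the Gaussian optimisation. A direct Gaussian integral gives
\begin{equation*}
\int_{\R^2} e^{-\alpha y^2 - \beta(x-y)^2 - \gamma x^2} \, dy \, dx = \frac{\pi}{\sqrt{\alpha\beta + \beta\gamma + \gamma\alpha}},
\end{equation*}
while $\|e^{-\alpha y^2}\|_{L^a} = (\pi/(a\alpha))^{1/(2a)}$ for $a = p/r$, and analogously for the other two factors. The scaling identity $\frac{1}{a}+\frac{1}{b}+\frac{1}{c}=2$ makes the powers of $\pi$ cancel in the ratio of trilinear integral to product of norms, leaving an algebraic optimisation in $\alpha, \beta, \gamma$ which, after using the conjugate-exponent identity $A_p A_{p'} = 1$, simplifies to $A_{p/r} A_{q/r}/A_{s/r}$. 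Raising back to the power $1/r$ and tensoring to dimension $n$ recovers \eqref{young-ineq-sharp}. For the equality case, unwinding Lieb's Gaussian characterisation through the substitution $F = |f|^r$, $G = |g|^r$ forces $f(x) = c_0 e^{-p(x-x_0)\cdot M_0(x-x_0)}$ and $g(x) = c_1 e^{-q(x-x_1)\cdot M_1(x-x_1)}$; the Brascamp--Lieb compatibility of $F, \check{G}, H$ along the coordinate surjections above then forces $M_0 = M_1$, giving the stated common positive-definite $M$.

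The main obstacle is Lieb's Gaussian-extremiser theorem for Brascamp--Lieb inequalities. Its proof is substantial and essentially independent of the Young setup --- it proceeds via a rotation/symmetrisation argument combined with a heat-flow monotonicity for the normalised trilinear form on a doubled space --- and is what ultimately produces the $A_r$ constants. Once this input is granted, the remaining pieces (duality, tensorisation, the Gaussian determinant computation, and the equality characterisation) reduce to routine Gaussian bookkeeping.
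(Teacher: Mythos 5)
The paper offers no proof of Theorem \ref{young-beck} at all: the sharp constant and the Gaussian equality case are imported wholesale from Beckner and Brascamp--Lieb, and the only accompanying argument is the remark that the general-$r$ case follows from the $r=1$ case via the substitution $f \mapsto |f|^r$, $g \mapsto |g|^r$, $p \mapsto p/r$, $q \mapsto q/r$, $s \mapsto s/r$. Your proposal carries out exactly this reduction (your $F = |f|^r$, $G = |g|^r$ step, noting that the left side of \eqref{young-ineq-sharp} equals $\|F \star G\|_{L^{s/r}(\R^n)}^{1/r}$ and that cross-correlation and convolution with a reflected factor have equal $L^{s/r}$ norms) and then, instead of citing the $r=1$ case, sketches its standard modern proof: duality to a trilinear Brascamp--Lieb form on $\R^{2n}$ with the correct scaling identity, tensorisation to $n=1$, Lieb's Gaussian-extremisability theorem, and the explicit Gaussian ratio. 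The exponent and constant bookkeeping you record ($r/p + r/q = 1 + r/s$, the scaling sum equal to $2$, the cancellation of the powers of $\pi$, and $A_a A_{a'} = 1$) all checks out. The genuinely hard ingredient --- that centred Gaussians attain the supremum and that, in this non-degenerate case, they are the only extremisers --- remains a black box in your write-up, so logically you are in the same position as the paper's citation; note that for the \emph{uniqueness} of extremisers in the Young setting the primary reference is Brascamp and Lieb's original paper (Lieb's later theorem gives extremisability, and the general Brascamp--Lieb extremiser classification came later still). Two loose ends if you were to write this out in full: the final optimisation over $\alpha, \beta, \gamma$ is asserted rather than computed, and in the equality case you must check that unwinding $F = f^r$, $G = g^r$ yields the precise relative normalisation of the statement, namely a single matrix $M$ appearing with prefactor $p$ in $f$ and $q$ in $g$; your argument identifies $M_0$ with $M_1$ up to the joint scaling symmetry, but the $p$-versus-$q$ calibration has to be read off from the explicit optimal $(\alpha,\beta,\gamma)$.
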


\begin{remark}  The results of Beckner and Brascamp-Lieb directly handle the $r=1$ case of this theorem, but the general case then follows by the substitution $f \mapsto |f|^r$, $g \mapsto |g|^r$, $s \mapsto s/r$, $p \mapsto p/r$, $q \mapsto q/r$.
\end{remark}

Specialising this inequality to the exponents \eqref{pqk} and $g=\overline{f}$ for some $k \geq 2$, we see that
$$
 \left(\int_{\R^n} \|(T^h f) \overline{f}\|_{L^{p_{k-1}}(\R^n)}^{2^{k-1}}\ dh \right)^{1/2^{k-1}}\leq
 \left(\frac{A_{2k/(k+1)}^2}{A_{k}}\right)^{nk/2^{k-1}} \|f\|_{L^{p_k}(\R^n)}^2.$$
A calculation shows that
$$ \frac{A_{2k/(k+1)}^2}{A_{k}} = \frac{2^{1/k} k^{1/2}}{(k+1)^{(k+1)/2k}}$$
and thus
$$ C_k = \left(\frac{A_{2k/(k+1)}^2}{A_{k}}\right)^{k/2^k} C_{k-1}^{1/2}.$$
Because of this, \eqref{ud-sharp} follows from \eqref{fk}, \eqref{young-ineq-sharp}, and induction.

From Theorem \ref{young-beck} we see from the above argument that equality holds when $f$ takes a gaussian form
$$ f(x) = c e^{-(x-x_0) \cdot M (x-x_0)} $$
for some $c \in \C$, $x_0 \in \R^n$ and a positive-definite $n \times n$ matrix $M$.  From \eqref{pepg} we see that the same holds for functions $f$ of the form \eqref{fphi}.

\begin{remark}  One can of course also verify equality in the Gaussian case by direct computation.  To illustrate this we consider the model case when $n=1$ and $f(x) = e^{-\pi |x|^2}$. The right-hand side of \eqref{ud-sharp} is easily seen to equal
$$ (2^d/(d+1))^{-(d+1)/2^{d+1}}$$
while the left-hand side is equal to
$$ |\det M|^{-1/2}$$
where $M$ is the $d+1 \times d+1$ matrix for the quadratic form
$$ Q(x,h_1,\ldots,h_d) := \sum_{\omega_1,\ldots,\omega_d \in \{0,1\}} |x + \omega_1 h_1 + \ldots + \omega_d h_d|^2.$$
A short computation shows that
$$
M = \begin{pmatrix}
2^d & 2^{d-1} & 2^{d-1} & \ldots & 2^{d-1} \\
2^{d-1} & 2^{d-1} & 2^{d-2} & \ldots & 2^{d-2} \\
2^{d-1} & 2^{d-2} & 2^{d-1} & \ldots & 2^{d-2} \\
\vdots & \vdots  & \vdots & \ddots & \vdots \\
2^{d-1} & 2^{d-2} & 2^{d-2} & \ldots & 2^{d-1}
\end{pmatrix}.
$$
Using Gaussian elimination (subtracting half of the first row from the remaining rows) one sees that
$$ \det M = 2^d 2^{(d-2)d} = 2^{d(d-1)}$$
and the claim then follows after another short computation.
\end{remark}

Now we consider the converse problem.
Suppose that equality holds in \eqref{ud-sharp}.  Since
$$  \|f\|_{U^k(\R^n)}  \leq  \||f|\|_{U^k(\R^n)}$$
equality must also hold for $|f|$.  By Theorem \ref{young-beck} and the above argument, we thus see that
$$ |f(x)| = c e^{-(x-x_0) \cdot M (x-x_0)} $$
for some $c \geq 0$, $x_0 \in \R^n$ and a positive-definite $n \times n$ matrix $M$.  We may of course assume that $c$ is non-zero, in which case we can normalise $c=1$.  By translation symmetry we can normalise $x=0$, and by a linear change of variables we may assume that $M$ is the identity, thus
$$ f(x) = e^{-|x|^2} e(P(x))$$
for some $P: \R^n \to \R/\Z$.  Since $e^{-|x|^2}$ is always non-zero, and since $f$ and $|f|$ must have the same $U^k(\R^n)$ norm, we see from \eqref{fug} that
$$ \Delta_{h_1} \ldots \Delta_{h_k} P(x) = 0$$
for almost every $h_1,\ldots,h_k, x \in \R^d$, and so $P$ is a polynomial of degree at most $k-1$.  The claim follows.

\begin{remark} The same method also gives sharp Young-type inequalities for the Gowers inner product
$$\langle (f_\omega)_{\omega \in \{0,1\}^k} \rangle_{U^k(\R^n)} := \int_{\R^n} \ldots \int_{\R^n} \prod_{\omega_1,\ldots,\omega_k \in \{0,1\}^k} {\mathcal C}^{\omega_1+\ldots+\omega_k} f_{\omega_1,\ldots,\omega_k}( x + \omega_1 h_1 + \ldots + \omega_k h_k );$$
we omit the details.
\end{remark}

\begin{remark}  In the case $k=3$, the above inequality becomes
$$ \|f\|_{U^3(\R^n)} \leq 2^{-n/8} \|f\|_{L^2(\R^n)}.$$
It is worth noting the large number of invariances that this inequality enjoys.  We have already observed the invariance with respect to linear changes of variable, translation, and multiplication by quadratic phases $e(\phi(x)) = e(x \cdot Mx + \xi \cdot x + \theta)$.  But a calculation also shows that the $U^3$ norm (just like the $L^2$ norm) is preserved by the Fourier transform operation $f \mapsto \hat f$, where
$$ \hat f(\xi) := \int_{\R^n} f(x) e^{-2\pi i x \cdot \xi}\ dx.$$
As a consequence, the $U^3(\R^n)$ norm is in fact preserved by the entire metaplectic representation\footnote{The \emph{metaplectic group} is a double cover of the symplectic group on $\R^{2n}$; its (unitary) action on $\R^n$ is known as the \emph{metaplectic representation} is generated by modulations $f(x) \mapsto f(x) e(P(x))$ by homogeneous quadratic polynomials $P$, linear changes of variable $f(x) \mapsto |\det(A)|^{1/2} f(Ax)$, and the Fourier transform.  See e.g. \cite{stein} for further discussion.} on $\R^n$.  Further Fourier-analytic symmetries of this type will be implicitly exploited in the proof of Theorem \ref{u3-system} in the next section.
\end{remark}

\section{Threshold for $U^3$ on systems}\label{u3-sys}

We now prove Theorem \ref{u3-system}.   We begin with the first claim; the claim that $2^{-1/8}$ is best possible will be deferred to the end of this section.

For any ergodic system $X$, define the \emph{Abramov factor} $A_2$ to be the factor $A_2 = (X, {\mathcal A}_2, \mu\downharpoonright_{{\mathcal A}_2}, T)$, where ${\mathcal A}_2$ is the sub-$\sigma$-algebra of ${\mathcal B}$ generated by the polynomials $P$ of degree at most $2$.  It will suffice to show that
\begin{equation}\label{u3-abr}
 \|f\|_{U^3(X)} \leq 2^{-1/8} \|f\|_{L^2(X)}
\end{equation}
whenever $f$ is orthogonal to the Abramov factor (thus $\E(f|A_2)=0$).  By a limiting argument (using \eqref{ukp}) we may assume that $f \in L^\infty(X)$.

We now use the machinery of Host and Kra \cite{host-kra} to reduce to the case when $X$ is a $2$-step nilsystem:

\begin{definition}[$2$-step nilsystem]  A \emph{$2$-step nilsystem} is an ergodic system of the form $(G/\Gamma, {\mathcal B}, \mu, T)$, where
$G$ is a $2$-step finite-dimensional nilpotent Lie group, $\Gamma$ is a discrete cocompact subgroup of $G$, $X$ is equipped with the Haar probability measure and the Borel $\sigma$-algebra, and the shift $T$ is given by $Tx = \tau x$ for some $\tau \in G$.
\end{definition}

Recall that in \cite{host-kra}, a factor $Z_2$ of $X$ was constructed with the property that
\begin{equation}\label{fux}
 \|f\|_{U^3(X)} = 0
\end{equation}
whenever $f$ was orthogonal to $Z_2$ (see \cite[Lemma 4.3]{host-kra}), and such that $Z_2$ was a system of order $2$ in the notation of \cite{hk01, host-kra}; see \cite[Proposition 4.11]{host-kra}.  Indeed, $Z_2$ was the maximal factor of order $2$.  It is easy to see that the Abramov factor $A_2$ is a system of order $2$, and so is necessarily a subfactor of $Z_2$.  From \eqref{fux} and the triangle inequality we see that
$$ \|f\|_{U^3(X)} = \|\E(f|Z_2)\|_{U^3(X)}$$
for all $f \in L^2(X)$, and so without loss of generality (replacing $f$ with $\E(f|Z_2)$ if necessary) we may assume that $f$ is $Z_2$-measurable.  In particular, we may assume without loss of generality that $X$ is of order $2$.  Applying \cite[Theorem 10.1]{host-kra}, we conclude that $X$ is an inverse limit of $2$-step nilsystems.  Applying a limiting argument (noting that if $Y$ is a factor of $X$, then the Abramov factor of $Y$ is a factor of the Abramov factor of $X$), we may thus assume without loss of generality that $X$ is a $2$-step nilsystem, $X = G/\Gamma$.  (Note that these reductions do not destroy the total ergodicity of the system.)

Next, we can place the ergodic nilsystem $G/\Gamma$ in a standard form.  As discussed in \cite[\S 4.1, 4.3]{host-kra-2step}, we may assume without loss of generality that the nilsystem obeys the following properties:

\begin{itemize}
\item $G/\Gamma$ is minimal (i.e. every orbit is dense) and uniquely ergodic.
\item The commutator group $G_2 := [G,G]$ is a torus.
\item The group $\Gamma$ is abelian and has trivial intersection with the centre of $G$ (and in particular with $G_2$).
\item $G$ is spanned by the connected component $G^\circ$ of the identity and by $\tau$.
\item The Kronecker factor $Z_1$ is isomorphic to the quotient system $G/(G_2 \Gamma)$, with the obvious factor map.
\end{itemize}

In particular, the torus $G_2$ acts freely on $G/\Gamma$.  Because of this, every function $f \in L^2(G/\Gamma)$ has a Fourier decomposition
\begin{equation}\label{fax}
f = \sum_{\xi \in \hat G_2} f_\xi
\end{equation}
where for each $\xi$ in the Pontryagin dual $\hat G_2$, $f_\xi \in L^2(G/\Gamma)$ is the function
\begin{equation}\label{foxy}
 f_\xi(x) := \int_{G_2} e(-\xi(g_2)) f(g_2 x) d\mu_{G_2}(g_2)
\end{equation}
(with $\mu_{G_2}$ being the Haar probability measure on the torus $G_2$) and the series is unconditionally convergent in $L^2(G/\Gamma)$, with the $f_\xi$ being orthogonal and obeying the Plancherel identity
$$ \|f\|_{L^2(G/\Gamma)}^2 = \sum_{\xi \in \hat G_2} \| f_\xi \|_{L^2(G/\Gamma)}^2.$$
Observe that each $f_\xi$ is an eigenfunction of $G_2$ action, in that
\begin{equation}\label{fxi}
f_\xi(g_2 x) = e(\xi(g_2)) f_\xi(x)
\end{equation}
for all $x \in G/\Gamma$ and $g_2 \in G_2$.

The $U^3(G/\Gamma)$ norm can be rewritten explicitly as an integral, as follows.  Given a $2$-step nilpotent group $G$, define the \emph{Host-Kra group} $\HK^3(G)$ (also known as $G^{[3]}_2$ or $\HP^3(G)$) to be the subgroup of $G^{\{0,1\}^3}$ given by the octuples of the form
$$ \left( \prod_{\omega' \in \{0,1\}^3: \omega'_i \leq \omega_i \hbox{ for all } i=1,2,3} g_{\omega'} \right)_{\omega \in \{0,1\}^3}$$
where $g_{\omega'} \in G_{|\omega'|}$ for all $\omega' \in \{0,1\}^3$, with the convention that $G_0 = G_1 := G$ and $G_3$ is trivial, and the product is ordered in some arbitrary fashion (e.g. lexicographical ordering on $\{0,1\}^3$ will suffice).  One can check that this is indeed a group; see \cite{green-tao-linearprimes}. For a $2$-step nilmanifold, one can show that $\HK^3(\Gamma)$ is a discrete cocompact subgroup of the $2$-step nilpotent Lie group $\HK^3(G)$ (see e.g. \cite[\S B.2]{host-kra-inf}), and thus the quotient space $\HK^3(G)/\HK^3(\Gamma) \subset (G/\Gamma)^{\{0,1\}^3}$ is a nilmanifold with a Haar measure $\mu_{\HK^3(G)/\HK^3(\Gamma)}$.  The $U^3(G/\Gamma)$ norm can then be expressed by the formula
\begin{equation}\label{f8}
\|f\|_{U^3(G/\Gamma)}^8 = \int_{\HK^3(G)/\HK^3(\Gamma)} \prod_{\omega \in \{0,1\}^3} {\mathcal C}^{|\omega|} f( x_\omega )\ d\mu_{\HK^3(G)/\HK^3(\Gamma)}(x)
\end{equation}
where $x = (x_\omega)_{\omega \in \{0,1\}^3}$, and ${\mathcal C}: z \mapsto \overline{z}$ is the complex conjugation map; see \cite[\S B.3-B.5]{host-kra-inf} (and also \cite{host-kra}).

The Fourier decomposition reacts well to both the Abramov factor and the $U^3$ seminorm:

\begin{lemma}  Let $f \in L^\infty(G/\Gamma)$, and let $f_\xi$ be the Fourier components of $f$.
\begin{itemize}
\item $f$ is orthogonal to $A_2$ if and only if each $f_\xi$ is orthogonal to $A_2$.
\item One has $\|f\|_{U^3(G/\Gamma)}^8 = \sum_{\xi \in \hat G_2} \|f_\xi\|_{U^3(G/\Gamma)}^8$.
\end{itemize}
\end{lemma}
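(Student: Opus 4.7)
My plan reduces both claims to a single structural observation: since $G$ is 2-step nilpotent, the group $G_2 = [G,G]$ is central in $G$, so $L_{g_2}^*$ commutes with $\tau$ and hence with every $\Delta_h = T^h - 1$. Combined with the left-translation invariance of the Haar measure on $\HK^3(G)/\HK^3(\Gamma)$, this $G_2$-action will decouple the Fourier modes $f_\xi$ both inside $A_2$ and inside the Host--Kra integral \eqref{f8}.

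For the first claim, the direction ``each $f_\xi \perp A_2 \Rightarrow f \perp A_2$'' is immediate from $f = \sum_\xi f_\xi$ in $L^2$. For the converse, I would show that $L^2(A_2) \subset L^2(G/\Gamma)$ is $G_2$-invariant. If $P$ is a polynomial of degree $\leq 2$, then
\begin{equation*}
\Delta_{h_1}\Delta_{h_2}\Delta_{h_3}(P \circ L_{g_2}) = (\Delta_{h_1}\Delta_{h_2}\Delta_{h_3} P) \circ L_{g_2} = 0
\end{equation*}
by the above commutation, so $P \circ L_{g_2}$ is again of degree $\leq 2$ and $L_{g_2}^*$ sends the generating family $\{e(P)\}$ of $L^2(A_2)$ into itself. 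Being unitary, it preserves the closed subspace $L^2(A_2)$, hence also $L^2(A_2)^\perp$, and the $L^2$-valued integral \eqref{foxy} defining $f_\xi$ then keeps $f \in L^2(A_2)^\perp$ inside $L^2(A_2)^\perp$.

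For the second claim, I would substitute $f = \sum_\xi f_\xi$ into \eqref{f8}, truncating to finitely many modes and using the uniform $L^\infty$ bound on the $f_\xi$ together with $L^2$-summability and the finite volume of $\HK^3(G)/\HK^3(\Gamma)$ to justify the interchange, to obtain
\begin{equation*}
\|f\|_{U^3(G/\Gamma)}^8 = \sum_{(\xi_\omega) \in \hat G_2^{\{0,1\}^3}} \int_{\HK^3(G)/\HK^3(\Gamma)} \prod_{\omega \in \{0,1\}^3} \mathcal{C}^{|\omega|} f_{\xi_\omega}(x_\omega) \, d\mu(x),
\end{equation*}
and show that only the diagonal terms $\xi_\omega \equiv \xi$ survive. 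For each $\omega' \in \{0,1\}^3$ with $|\omega'| \leq 2$, the element of $\HK^3(G)$ with parameter $g \in G_2$ in position $\omega'$ and identity elsewhere is admissible (since $G_2 \subseteq G_{|\omega'|}$), and left-translating $x$ by it multiplies $x_\omega$ by $g$ exactly when $\omega \geq \omega'$. By the eigenfunction identity \eqref{fxi}, the integrand picks up the factor $e\bigl(\sum_{\omega \geq \omega'}(-1)^{|\omega|}\xi_\omega(g)\bigr)$, and translation invariance of the Haar measure followed by averaging over $g \in G_2$ forces $\sum_{\omega \geq \omega'}(-1)^{|\omega|}\xi_\omega = 0$ whenever the integral is nonzero. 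Running this identity at the three $\omega'$ with $|\omega'|=2$ forces $\xi_{110} = \xi_{101} = \xi_{011} = \xi_{111}$; then at the three $\omega'$ with $|\omega'|=1$ it forces $\xi_{100} = \xi_{010} = \xi_{001} = \xi_{111}$; and finally $\omega' = 0$ pins $\xi_{000} = \xi_{111}$. Only the diagonal terms thus survive, yielding exactly $\sum_\xi \|f_\xi\|_{U^3(G/\Gamma)}^8$.

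The main thing to be careful about is the alternating-sum bookkeeping in the three-step elimination above rather than any serious analytic issue; the conceptual input is that $G_2$ embeds into every $G_{|\omega'|}$ for $|\omega'| \leq 2$, so left translation by elements of $\HK^3(G)$ supported in $G_2$ produces enough orthogonality on $\hat G_2^{\{0,1\}^3}$ to kill all off-diagonal contributions.
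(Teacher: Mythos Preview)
Your proof is correct and follows the same overall strategy as the paper: for the first claim, both you and the paper observe that the $G_2$-action commutes with $T$ and hence preserves polynomials of degree $\leq 2$, so $L^2(A_2)$ is $G_2$-invariant and the projection \eqref{foxy} keeps $A_2^\perp$ inside itself. For the second claim, both arguments expand the Host--Kra integral \eqref{f8} in Fourier modes and kill the off-diagonal terms by translation-invariance of the Haar measure on $\HK^3(G)/\HK^3(\Gamma)$.

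The one genuine difference is in \emph{which} elements of $\HK^3(G)$ you translate by. The paper, given $\xi_\omega \neq \xi_{\omega'}$, translates by the element that is $g_2$ at the single vertex $\omega$, $g_2^{-1}$ at the single vertex $\omega'$, and the identity elsewhere; this gives the vanishing in one stroke. You instead use the ``upper-face'' elements $\iota_{\omega'}(g)$ (parameter $g \in G_2$ at position $\omega'$ with $|\omega'| \leq 2$, identity elsewhere), which are manifestly in $\HK^3(G)$ straight from the defining parametrisation, and then run a clean three-stage elimination on the resulting constraints $\sum_{\omega \geq \omega'}(-1)^{|\omega|}\xi_\omega = 0$. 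Your route is slightly longer but has the advantage that membership in $\HK^3(G)$ is immediate; the paper's ``two-vertex'' element actually lies in $\HK^3(G)$ only when $|\omega|$ and $|\omega'|$ have the same parity (the alternating-sum constraint on $G_2$-valued cubes forces this), a point that needs a small patch in the paper's version but which your approach sidesteps entirely.
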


\begin{proof} We prove the first claim.  The ``if'' part is trivial from the expansion \eqref{fax}, which is unconditionally convergent in $L^2(G/\Gamma)$.  To show the ``only if'' part, one observes that the action of $G_2$ commutes with $T$ and thus preserves the property of being a polynomial of degree $\leq 2$, and the claim then follows from \eqref{foxy}.

Now we establish the second claim.  By the unconditional convergence of the Fourier expansion $f = \sum_\xi f_\xi$ in $L^2(G/\Gamma)$ and \eqref{ukp}, we may assume without loss of generality that all but finitely many of the $f_\xi$ vanish.  By \eqref{f8}, the left-hand side expands as
$$
\sum_{(\xi_\omega)_{\omega \in \{0,1\}^3} \in (\hat G_2)^{\{0,1\}^3}} \int_{\HK^3(G)/\HK^3(\Gamma)} \prod_{\omega \in \{0,1\}^3} {\mathcal C}^{|\omega|} f_{\xi_\omega}( x_\omega )\ d\mu_{\HK^3(G)/\HK^3(\Gamma)}(x)$$
while the right-hand side expands as
$$
\sum_{\xi \in \hat G_2} \int_{\HK^3(G)/\HK^3(\Gamma)} \prod_{\omega \in \{0,1\}^3} {\mathcal C}^{|\omega|} f_{\xi}( x_\omega )\ d\mu_{\HK^3(G)/\HK^3(\Gamma)}(x).$$
It thus suffices to show that the expression
\begin{equation}\label{hg}
 \int_{\HK^3(G)/\HK^3(\Gamma)} \prod_{\omega \in \{0,1\}^3} {\mathcal C}^{|\omega|} f_{\xi_\omega}( x_\omega )\ d\mu_{\HK^3(G)/\HK^3(\Gamma)}(x)
\end{equation}
vanishes whenever the $\xi_{\omega}$ are not all equal to each other.

Suppose now that $\omega, \omega' \in \{0,1\}^3$ are such that $\xi_\omega \neq \xi_{\omega'}$, then there exists $g_2 \in G_2$ such $\xi_\omega(g_2) \neq \xi_{\omega'}(g_2)$.  Call the integrand in \eqref{hg} $F$.  Then from \eqref{fxi} we have the eigenfunction equation
$$ F( (g_{\omega''})_{\omega'' \in \{0,1\}^3} x ) = e( \xi_\omega(g_2) - \xi_{\omega'}(g_2) ) F(x)$$
for all $x \in \HK^3(G)/\HK^3(\Gamma)$, where $g_{\omega''}$ is equal to $g_2$ when $\omega'' = \omega$, $g_2^{-1}$ when $\omega'' = \omega'$, and the identity otherwise.  On the other hand, the element $(g_{\omega''})_{\omega'' \in \{0,1\}^3}$ is easily verified to lie in $\HK^3(G)$, and thus leaves the Haar measure $\mu_{\HK^3(G)/\HK^3(\Gamma)}$ invariant.  As the factor $e( \xi_\omega(g_2) - \xi_{\omega'}(g_2) )$ is not equal to $1$, the integral in \eqref{hg} must vanish, and the claim follows.
\end{proof}

In view of the above lemma, we see that in order to prove the inequality \eqref{u3-abr} for $f$, it suffices to do so for each $f_\xi$.  In particular, we may assume without loss of generality that $f$ obeys \eqref{fxi} for some $\xi \in \hat G_2$.

Next, we reduce to the case when $G$ is connected, as follows.  Suppose that $G$ is not connected.
In this case, $G/G^\circ\Gamma$ is a discrete compact group (i.e. a finite group) generated by $\tau$.  As $\tau$ is totally ergodic, this group must be trivial.  Thus we have $\tau = g_0 \gamma$ for some $g_0 \in G^\circ$ and $\gamma \in \Gamma$.   We have
$$ \tau x = g_0 x [\gamma,x]$$
and thus
\begin{equation}\label{tauj}
\tau^j x = g_0^j x [\gamma,x]^j [\gamma,g_0]^{\frac{j(j-1)}{2}}.
\end{equation}
In particular
\begin{equation}\label{ftauj}
f(\tau^j x) = f(g_0^j x) e( j \xi( [\gamma,x] ) + \frac{j(j-1)}{2} \xi([\gamma,g_0]) ).
\end{equation}
The group elements $\tau, g_0$ have the same projection to the
abelianisation $G/G_2\Gamma$.  As $\tau$ is totally ergodic, we
conclude from a theorem of Leibman \cite{leibman} that $g_0$ is also totally ergodic.   Thus we can build a nilsystem with nilmanifold $G^\circ/(G^\circ \cap \Gamma) \equiv G/\Gamma$ and shift given by $g_0$, which has the same Haar measure as the original nilsystem $G/\Gamma$.  From \eqref{ftauj} and a short calculation we see that $f$ has the same $U^3$ norm with respect to the original nilsystem $G/\Gamma$ as it does with the new system $G^\circ/(G^\circ \cap \Gamma)$; also, from \eqref{tauj}, every polynomial of degree $\leq 2$ in the original nilsystem remains a polynomial of degree $\leq 2$ in the new system and vice versa.  Finally, the property \eqref{fxi} is retained (possibly after restricting $\xi$ to a smaller commutator subgroup $[G^\circ,G^\circ]$ if necessary) after passing from the old nilsystem to the new one.  From this discussion we see that we may assume without loss of generality that $G=G^\circ$, so that $G$ is connected.

If $\xi$ is zero, then $f$ is now $G_2$-invariant, and thus $Z_1$-measurable.  But as $Z_1$ is contained in the Abramov factor $A_2$, and $f$ is orthogonal to $A_2$, this implies that $f$ is trivial, in which case \eqref{u3-abr} is also trivial.  So we may assume that $\xi$ is nonzero.  In this case, $f$ is invariant with respect to the subtorus $\xi^\perp := \{ g_2 \in G_2: \xi(g_2) = 0 \}$ of $G_2$; by quotienting out by this subtorus, we may assume that $\xi^\perp$ is trivial, so that $G_2$ is now isomorphic to the unit circle $\R/\Z$.  In the notation of \cite{host-kra-2step}, this means that the $2$-step nilmanifold $G/\Gamma$ is \emph{elementary}.

We will no longer need the hypothesis that $f$ is orthogonal to the Abramov factor, as this has been superseded by the non-zero nature of $\xi$ and the connected nature of $G$.  We therefore discard this hypothesis, as this will free us to perform some additional transformations on the nilsystem $G/\Gamma$.

The elementary connected nilmanifolds were classified in \cite[Lemma 12]{host-kra-2step}.  According to that lemma, the nilsystem $G/\Gamma$ is isomorphic to a product $G'/\Gamma' \times (\R/\Z)^m$, where $m \ge 0$, $G'$ is the $2$-step nilpotent group $G' = \R^{2d} \times \R/\Z$ for some $d \geq 1$ with multiplication law
$$ (x,z) (x,z') = (x+x', z+z' + \langle Ax, x' \rangle )$$
where $A$ is a $2d \times 2d$ a matrix with integer entries such that $B := A - A^t$ is nonsingular, and $\Gamma' := \Z^{2d} \times \{0\}$.

\begin{remark}  A model case to keep in mind is when $d=1$, $m=0$, and $A = \begin{pmatrix} 0 & 1 \\ 0 & 0 \end{pmatrix}$.  Indeed, our strategy will essentially be to reduce to this case.
\end{remark}

The next step is to eliminate the torus factor $(\R/\Z)^m$.  For this we use a convenient Fubini-type theorem for the Gowers-Host-Kra norm:

\begin{lemma}[Fubini-type theorem]\label{Fub}  Let $k \geq 1$.  If $X, Y$ are two ergodic systems, and $f \in L^\infty(X \times Y)$, then
$$ \| f \|_{U^k(X \times Y)} \leq \| F \|_{U^k(Y)}$$
where $F \in L^\infty(Y)$ is the function $F(y) := \| f_y \|_{U^k(X)}$, and $f_y \in L^\infty(X)$ is the function $f_y(x) := f(x,y)$.
\end{lemma}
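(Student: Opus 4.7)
The cleanest approach is via the Host--Kra representation of $U^k$ as an integral over a cube space. For any ergodic system $X$, Host and Kra \cite{host-kra} construct a probability measure $\mu_X^{[k]}$ on the cube space $X^{[k]}:=X^{\{0,1\}^k}$ satisfying
$$\|g\|_{U^k(X)}^{2^k} = \int_{X^{[k]}} \prod_{\omega \in \{0,1\}^k} \mathcal{C}^{|\omega|} g(x_\omega)\, d\mu_X^{[k]}(\vec x),$$
and the associated Gowers inner product
$$\langle (g_\omega)_\omega\rangle_{U^k(X)} := \int_{X^{[k]}} \prod_\omega \mathcal{C}^{|\omega|} g_\omega(x_\omega)\, d\mu_X^{[k]}$$
obeys the Gowers--Cauchy--Schwarz inequality $|\langle (g_\omega)_\omega\rangle_{U^k(X)}| \leq \prod_\omega \|g_\omega\|_{U^k(X)}$.

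\textbf{Main calculation.} The key structural fact is that the cube measure of a product of ergodic systems factors: $\mu_{X\times Y}^{[k]} = \mu_X^{[k]} \otimes \mu_Y^{[k]}$ (with $X \times Y$ itself ergodic). Granting this, Fubini gives
$$\|f\|_{U^k(X\times Y)}^{2^k} = \int_{Y^{[k]}} \left(\int_{X^{[k]}} \prod_\omega \mathcal{C}^{|\omega|} f_{y_\omega}(x_\omega)\, d\mu_X^{[k]}\right) d\mu_Y^{[k]}(\vec y) = \int_{Y^{[k]}} \langle (f_{y_\omega})_\omega\rangle_{U^k(X)}\, d\mu_Y^{[k]}(\vec y).$$
Applying Gowers--Cauchy--Schwarz pointwise in $\vec y$ bounds the inner expression by $\prod_\omega F(y_\omega)$; since $F$ is real and nonnegative the conjugations $\mathcal{C}^{|\omega|}$ act trivially, so
$$\|f\|_{U^k(X\times Y)}^{2^k} \leq \int_{Y^{[k]}} \prod_\omega F(y_\omega)\, d\mu_Y^{[k]} = \int_{Y^{[k]}} \prod_\omega \mathcal{C}^{|\omega|} F(y_\omega)\, d\mu_Y^{[k]} = \|F\|_{U^k(Y)}^{2^k},$$
and taking $2^k$-th roots gives the lemma.

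\textbf{Main obstacle and alternatives.} The main obstacle is verifying the product factorization $\mu_{X\times Y}^{[k]} = \mu_X^{[k]}\otimes\mu_Y^{[k]}$. I would prove this by induction on $k$ from the recursive definition of $\mu^{[k]}$ as the relatively-independent self-join of $\mu^{[k-1]}$ over the invariants of the diagonal shift $T^{[k-1]}$ on $X^{[k-1]}$. On $(X \times Y)^{[k-1]}$ the diagonal shift factors as $T^{[k-1]} \times S^{[k-1]}$; the inductive product structure on $\mu_{X\times Y}^{[k-1]}$ together with ergodicity of $X \times Y$ forces the corresponding invariant $\sigma$-algebra to factor as a product, and the relatively-independent join construction is compatible with taking products, closing the induction. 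An elementary alternative is to argue directly by induction on $k$ from the recursive seminorm definition: the base case $k=1$ is Fubini plus the triangle inequality, and the inductive step applies the hypothesis to $(T\times S)^h f \cdot \bar f$ to reduce to a double $h$-average on $Y$, which one handles using H\"older on $Y^{[k]}$ combined with a Gowers--Cauchy--Schwarz argument on $X$; this essentially rebuilds the cube-measure factorization in disguise, so the cube-measure route is preferable.
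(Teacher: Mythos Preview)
Your proposal is correct and follows essentially the same route as the paper: express $\|f\|_{U^k(X\times Y)}^{2^k}$ via the Host--Kra cube measure, invoke the factorization $\mu^{[k]}_{X\times Y}=\mu^{[k]}_X\times\mu^{[k]}_Y$, apply Fubini, then bound the inner $X^{[k]}$-integral by $\prod_\omega F(y_\omega)$ using the Cauchy--Schwarz--Gowers inequality. The paper simply asserts the factorization of cube measures (``an inspection of that construction reveals that\ldots''), whereas you sketch the inductive verification via the recursive definition of $\mu^{[k]}$ as a relatively independent self-join; this is a reasonable elaboration of a point the paper leaves to the reader.
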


\begin{proof}  From \cite[\S 3]{host-kra}, one has
\begin{equation}\label{fy}
 \| f \|_{U^k(X \times Y)}^{2^k} = \int_{(X \times Y)^{[k]}} \prod_{\omega \in \{0,1\}^k} {\mathcal C}^{|\omega|} f(x_\omega,y_\omega)\ d\mu^{[k]}_{X \times Y}(x,y)
\end{equation}
where the cubic measure space $(X \times Y)^{[k]}, \mu_{X \times Y}^{[k]})$ is defined in \cite[\S 3]{host-kra}.  An inspection of that construction reveals that
$$ \mu^{[k]}_{X \times Y} = \mu^{[k]}_X \times \mu^{[k]}_Y$$
and thus (by Fubini's theorem) we can rewrite the previous expression as
$$ \int_{Y^{[k]}} \int_{X^{[k]}} \prod_{\omega \in \{0,1\}^k} {\mathcal C}^{|\omega|} f_{y_\omega}(x_\omega)\ d\mu^{[k]}_{X}(x) d\mu^{[k]}_Y(y).$$
From the Cauchy-Schwarz-Gowers inequality (see \cite[Lemma 3.9]{host-kra}), we have
$$ \int_{X^{[k]}} \prod_{\omega \in \{0,1\}^k} {\mathcal C}^{|\omega|} f_{y_\omega}(x_\omega)\ d\mu^{[k]}_{X}(x) \leq \prod_{\omega \in \{0,1\}^k} F(y_\omega),$$
and the claim follows by applying \eqref{fy} for $F$.
\end{proof}

From this inequality and the Fubini-Tonelli theorem, we see that to establish \eqref{u3-abr} for $f$, it suffices to do so for the functions $f_y: x \mapsto f(x,y)$ in $G'/\Gamma'$ for all $y \in (\R/\Z)^m$.  Thus we may assume without loss of generality that $m=0$ and $G/\Gamma = (G'/\Gamma')$.

We can now view $f$ as a bounded measurable function $f: \R^{2d} \times (\R/\Z) \to \C$ obeying the periodicity condition
$$ f( x+n, z + \langle Ax, n \rangle) = f(x,z)$$
for all $x \in \R^{2d}, n \in \Z^{2d}, z \in \R/\Z$, as well as the frequency condition
$$ f( x, z + \theta ) = e(\xi \theta) f(x,z)$$
for some non-zero integer $\xi$.  By conjugation symmetry we may take $\xi$ to be positive.  (The reader may wish to keep the simple case $\xi=1$ in mind for a first reading, as some of the more technical complications are avoided in that case.)  We may thus factorise
\begin{equation}\label{twist}
 f(x,z) = F(x) e(\xi z)
\end{equation}
where $F$ now obeys the twisted periodicity condition
$$ F(x+n) = e(- \xi \langle Ax, n \rangle) F(x)$$
for all $x \in \R^{2d}, n \in \Z^{2d}$.

We make the observation that without loss of generality, we may refine the lattice $\Gamma = \Z^{2d} \times \{0\}$ to any sublattice of $\Gamma$, thus lifting the nilsystem to a finite cover.  Indeed, the total ergodicity of the system does not change (as can be seen by Leibman's criterion \cite{leibman} for total ergodicity), and the $L^2$ and $U^3$ norms of $f$ are also seen to be unaffected (renormalising the Haar measure to be a probability measure, of course).  We shall need to exploit this freedom to refine the lattice shortly.

We now transform the system to a Heisenberg normal form.  As observed in \cite[\S 8]{host-kra-2step}, there exists a $2d \times 2d$ nonsingular matrix $\Phi$ with rational entries such that
$$ A-A^t = \Phi^t J \Phi$$
where $J$ is the $2d \times 2d$ matrix
$$ J :=\begin{pmatrix} 0 & I_d \\ -I_d & 0 \end{pmatrix}.$$
As $\Phi$ is rational and nonsingular, one can find a positive integer $q$ such that $\Phi ( q \Z^{2d} )$ is a sublattice of $\Gamma$; we may also assume without loss of generality that $q$ is even.  Passing to this sublattice of $\Gamma'$ and applying the change of variables $(x,z) = (\Phi(x'),z)$, we may now assume without loss of generality that $A-A^t = J$, at the cost of replacing $\Gamma$ with $(q \Z^{2d} \times \{0\})$.  Thus, in particular, we now have
$$ F(x+n) = e(- \xi\langle Ax, n \rangle) F(x)$$
for all $x \in \R^{2d}$ and $n \in q\Z^{2d}$.

We can write
$$ A = \begin{pmatrix} 0 & I_d \\ 0 & 0 \end{pmatrix} + A'$$
where $A'$ is symmetric.  If we then apply the change of variable $(x,z) = (x,\tilde z+\frac{1}{2} \langle A'x, x \rangle)$, this has the effect of replacing $A$ with $\begin{pmatrix} 0 & I_d \\ 0 & 0 \end{pmatrix}$ without actually affecting the nilmanifold or $f$ (here we use that $q$ is even to show that $q\Z^{2d} \times \{0\}$ is unaffected by the change of variables).  Thus we may assume without loss of generality that
$$ A = \begin{pmatrix} 0 & I_d \\ 0 & 0 \end{pmatrix}.$$

By lifting the nilsystem to a finite cover via the change of coordinates $(x,z) = (qx',q^2 z')$ and replacing $\xi$ by $q^2 \xi$, we may assume without loss of generality that $q=1$.  Thus, splitting $x \in \R^{2d}$ as $(x_1,x_2)$ with $x_1,x_2 \in \R^d$, the group $G$ now has law
$$ (x_1,x_2,z) (x'_1,x'_2,z') = (x_1+x'_1,x_2+x'_2,z+z'+ x'_1 x_2)$$
and $F$ now obeys the twisted periodicity condition
$$ F(x_1+n_1,x_2+n_2) = e( - \xi n_1 \cdot x_2 ) F(x_1,x_2)$$
whenever $x_1,x_2 \in \R^d$ and $n_1,n_2 \in \Z^d$.  In particular, $F$ is periodic in the $\{0\} \times \Z^d$ directions, and can thus be viewed as a function on $\R^d \times (\R^d/\Z^d)$.

A fundamental domain for $G/\Gamma$ is given by $[0,1)^d \times [0,1)^d \times (\R/\Z)$, and the Haar measure is given by Lebesgue measure. The $L^2(G/\Gamma)$ norm of $f$ can then be expressed as
\begin{equation}\label{01}
 \|f\|_{L^2(G/\Gamma)} = \left(\int_{[0,1)^d \times (\R^d/\Z^d)} |F(x_1,x_2)|^2\ dx_1 dx_2\right)^{1/2}.
\end{equation}
Now we compute the $U^3$ norm.  A calculation shows that $\HK^3(G)$ consists of tuples of the form
$$( (x_{1,\omega}, x_{2,\omega}, z_\omega) )_{\omega \in \{0,1\}^3}$$
with the property that the $x_{i,\omega} \in \R^d$ depend linearly on $\omega$, thus
\begin{equation}\label{xio}
x_{i,\omega} = x_{i,0} + \sum_{j=1}^3 x_{i,j} \omega_j
\end{equation}
for some $x_{i,0}, x_{i,j} \in \R^d$, and the $z_\omega \in \R/\Z$ obey the condition
$$ \sum_{\omega \in \{0,1\}^3} (-1)^{|\omega|} z_\omega = 0.$$
Indeed, these tuples are easily verified to form a connected Lie group that contains all the generators of $\HK^3(G)$, and its tangent space is contained in the tangent space of $\HK^3(G)$, hence the claim.  The subgroup $\HK^3(\Gamma)$ arises when all the $x_{i,j}$ are integers and the $z_\omega$ vanish.  A fundamental domain for $\HK^3(G)/\HK^3(\Gamma)$ can then be specified by requiring the $x_{i,j}$ to all lie in $[0,1)$.
From this and \eqref{twist}, we see that
\begin{equation}\label{face}
 \|f\|_{U^3(G/\Gamma)}^8 = \int_{([0,1)^d \times (\R/\Z)^d)^3} \prod_{\omega \in \{0,1\}^3} {\mathcal C}^{|\omega|} F( x_{1,\omega}, x_{2,\omega} )\ \prod_{i=0}^3 dx_{1,i} dx_{2,i}
 \end{equation}
where $x_{1,\omega}, x_{2,\omega}$ are defined by \eqref{xio}.

To simplify this expression, we take advantage of the periodicity of $F$ in the $\{0\} \times \Z^d$ to obtain the Fourier decomposition
$$ F(x_1,x_2) = \sum_{k \in \Z^d} F_k(x_1) e( k \cdot x_2 )$$
where $F_k \in L^\infty(\R^d)$ obeys the twisted periodicity condition
\begin{equation}\label{faxi}
 F_k(x_1+n_1) = F_{k+\xi n_1}(x_1)
\end{equation}
for all $x_1 \in \R^d$ and $n_1 \in \Z^d$.  By an approximation argument we may assume that all but finitely many of the $F_k$ vanish on $[0,1]^d$.

We may now expand \eqref{face} as the sum of expressions of the form
\begin{equation}\label{flick}
 \int_{([0,1)^d \times (\R/\Z)^d)^3} \prod_{\omega \in \{0,1\}^3} {\mathcal C}^{|\omega|} F_{k_\omega}( x_{1,\omega} )
e( k_\omega \cdot x_{2,\omega} )\ \prod_{i=0}^3 dx_{1,i} dx_{2,i}
\end{equation}
where $(k_\omega)_{\omega \in \{0,1\}^3}$ are a tuple in $\Z^d$.  Because of the constraint \eqref{xio} with $i=2$, the $x_{2,i}$ integrals vanish unless the $k_\omega$ take the form
\begin{equation}\label{kio}
k_{\omega} = k_{0} + \sum_{j=1}^3 k_{j} \omega_j
\end{equation}
for some $k_0,k_1,k_2,k_3 \in \Z^d$.  Assuming that \eqref{kio} holds, the expression \eqref{flick} simplifies to
$$
 \int_{([0,1)^d)^3} \prod_{\omega \in \{0,1\}^3} {\mathcal C}^{|\omega|} F_{k_\omega}( x_{1,\omega} )\ \prod_{i=0}^3 dx_{1,i}.$$
The next step is to foliate $\Z^d$ into cosets of $\xi \Z^d$ (recall that $\xi$ has been normalised to be a positive integer).  Let $Q$ be the fundamental domain $\{0,1,\ldots,\xi-1\}^d$.  We can then split \eqref{face} as
$$ \sum_{k_0,k_1,k_2,k_3 \in Q} \sum_{n_0,n_1,n_2,n_3 \in \Z^d}
 \int_{([0,1)^d)^3} \prod_{\omega \in \{0,1\}^3} {\mathcal C}^{|\omega|} F_{k_\omega + \xi n_\omega}( x_{1,\omega} )\ \prod_{i=0}^3 dx_{1,i}
$$
where
$$ n_{\omega} := n_{0} + \sum_{j=1}^3 n_{j} \omega_j.$$
Using \eqref{faxi}, this expression can be rewritten as
$$ \sum_{k_0,k_1,k_2,k_3 \in Q} \sum_{n_0,n_1,n_2,n_3 \in \Z^d}
 \int_{([0,1)^d)^3} \prod_{\omega \in \{0,1\}^3} {\mathcal C}^{|\omega|} F_{k_\omega}( x_{1,\omega} + n_\omega )\ \prod_{i=0}^3 dx_{1,i};
$$
viewing the $x_{1,i} \in [0,1)^d$ and $n_{1,i} \in \Z^d$ variables as the fractional and integer parts respectively of a variable $y_i \in \R^d$, we can rewrite this as
$$ \sum_{k_0,k_1,k_2,k_3 \in Q}
 \int_{(\R^d)^3} \prod_{\omega \in \{0,1\}^3} {\mathcal C}^{|\omega|} F_{k_\omega}( y_{\omega} )\ \prod_{i=0}^3 dy_i,
$$
where
$$ y_{\omega} := y_{0} + \sum_{j=1}^3 y_{j} \omega_j.$$
By the Cauchy-Schwarz-Gowers inequality (see e.g. \cite[Lemma 3.9]{host-kra}; the extension to $\R^d$ is routine), we can upper bound this expression as
$$ \sum_{k_0,k_1,k_2,k_3 \in Q}
\prod_{\omega \in \{0,1\}^3} \| F_{k_\omega} \|_{U^3(\R^d)},$$
and then applying Theorem \ref{syg} we can upper bound this in turn by
$$ 2^{-d} \sum_{k_0,k_1,k_2,k_3 \in Q}
\prod_{\omega \in \{0,1\}^3} \| F_{k_\omega} \|_{L^2(\R^d)}.$$
From \eqref{faxi} we see that the function $k \mapsto \|F_k\|_{L^2(\R^d)}$ is periodic with period $\xi \Z^d$, and can thus be viewed as a function $g: \Z^d / \xi \Z^d \to \R$.  We can then rewrite the above expression as
$$ 2^{-d} \sum_{k_0,k_1,k_2,k_3 \in \Z^d/\xi \Z^d} \prod_{\omega \in \{0,1\}^3} g( k_0 + \omega_1 k_1 + \omega_2 k_2 + \omega_3 k_3 )$$
which by \eqref{fug} is just
$$ 2^{-d} \xi^4 \|g\|_{U^3(\Z^d/\xi \Z^d)}^8$$
which by \eqref{ukp} is bounded by
$$ 2^{-d} \xi^4 \|g\|_{L^2(\Z^d/\xi \Z^d)}^8.$$
Using \eqref{faxi} and Fubini's theorem, one can rewrite this as
$$ 2^{-d} \left(\sum_{k \in \Z^d} \| F_k \|_{L^2([0,1)^d)}^2\right)^4$$
which by Plancherel's theorem is equal to
$$ 2^{-d} \| F \|_{L^2([0,1)^d \times (\R/\Z)^d)}^8,$$
and the claim \eqref{u3-abr} then follows from \eqref{01} and the fact that $d \geq 1$.  This concludes the proof of the first part of Theorem \ref{u3-system}.

\subsection{Sharpness}

The above argument also shows that the $2^{-1/8}$ constant in Theorem \ref{u3-system} is sharp.  Indeed, we consider the Heisenberg nilmanifold $G/\Gamma$ in which $G := \R^2 \times (\R/\Z)$ has the group law
$$ (x_1,x_2,z) (x'_1,x'_2,z') := (x_1+x'_1, x_2+x'_2, z+z'+x'_1 x_2)$$
and $\Gamma := \Z^2 \times \{0\}$, and consider a function $f: G/\Gamma \to \C$ of the form
$$ f(x_1,x_2,z) = e(z) \sum_k F_k(x_1) e(k x_2)$$
where the $F_k: \R \to \C$ are functions obeying the periodicity condition
$$ F_k(x_1 + n_1) = F_{k+n_1}(x_1)$$
for all $k,n \in \Z$ and $x_1 \in \R$.  Thus we can in fact write
$$ f(x_1,x_2,z) = e(z) \sum_k F_0(x_1+k) e(k x_2).$$
(The relationship between $f$ and $F_0$ is somewhat similar to that of the \emph{Zak transform} used in signal processing.) The above calculations then show that
$$ \|f\|_{L^2(G/\Gamma)} = \|F_0\|_{L^2(\R)}$$
and
$$ \|f\|_{U^3(G/\Gamma)} = \|F_0\|_{U^3(\R)}.$$
The optimality of the constant $2^{-1/8}$ then follows from the optimality of $2^{-1/8}$ in Theorem \ref{syg}, by taking $F_0$ to be a gaussian (which makes $f$ essentially a theta function).

\begin{remark}\label{total}  The total ergodicity hypothesis was needed in order to reduce to the case when the nilpotent group $G$ was connected.  Without this hypothesis, additional nilsystems can occur which are not contained in the Abramov factor, for which the inequality \eqref{u3-abr} is not reducible to Theorem \ref{syg}, thus requiring a further analysis.  A model example arises by setting $G$ equal to the semidirect product $\Z \ltimes \R^2$, where the generator $e$ of $\Z$ acts by conjugation on $\R^2$ by the formula
$$ e (x,y) e^{-1} := (x,y+x/m)$$
for some integer $m \geq 1$.  If we then set $\Gamma := m\Z \ltimes \Z^2$, and let $\tau \in G$ be the group element $\tau := (\alpha,0) e$ for some irrational $\alpha \in \R$, then $G/\Gamma$ becomes an ergodic (but not totally ergodic) $2$-step nilsystem, with fundamental domain given by $(x,y) e^j$ with $x,y \in [0,1)$ and $j \in \{0,\ldots,m-1\}$, and orbit given by
$$ \tau^n (x,y) e^j \Gamma = ( \{x+n\alpha\}, \{ y + n x / m + \frac{n(n-1)}{2} \alpha / m - \lfloor x+n\alpha \rfloor \frac{n+j}{m} ) e^{(n+j) \mod m} \Gamma.$$
One can verify that the only polynomials of degree at most $2$ on this factor are actually of degree $1$, so the Abramov factor\footnote{On the other hand, if one lifts to the finite extension $G / (m\Z \ltimes m\Z^2)$, then the Abramov factor becomes the entire system.  So one way to extend Theorem \ref{u3-system} to the non-totally-ergodic case is to allow the polynomial $P$ to lie in an extension of $X$, rather than in $X$ itself.  Similar objects have also recently been considered by Szegedy\cite{szeg}.} is equal to the Kronecker factor (i.e. the sets which have trivial behaviour with respect to the $y$ coordinate).  From Theorem \ref{p-system} one must have a bound of the form $\|f\|_{U^3(G/\Gamma)} \leq c \|f\|_{L^2(G/\Gamma)}$ for all $f$ orthogonal to the Abramov factor and some absolute constant $c<1$, but it is not clear to us what this optimal constant is.
\end{remark}

\begin{remark} The above arguments in fact suggest that there should be multiple thresholds; in particular, if $X$ is totally ergodic and $\|f\|_{U^3(X)} > 2^{-d/8} \| f\|_{L^2(X)}$, then $f$ should correlate with a factor generated by the Abramov factor and finitely many Heisenberg nilsystems of dimension less than $2d+1$.  We will not quantify this claim precisely here.
\end{remark}

\section{Threshold for $U^3$ on an interval}\label{u3-n-sec}

We now prove Theorem \ref{u3-n}.  We argue by contradiction.  If the claim failed, then we could find $\eta > 0$, a sequence $N = N_n$ of positive integers, and functions $f = f_n \in L^\infty([N])$ such that $\|f\|_{L^\infty([N]} \leq 1$ and $\|f\|_{U^3([N])} \geq 2^{-1/8}+\eta$, but such that
\begin{equation}\label{fap}
 \langle f, e(P) \rangle_{L^2([N])} = o(1)
\end{equation}
uniformly for all polynomials $P: [N] \to \R/\Z$ of degree at most $2$, where for the purposes of this section, $o(1) = o_{n \to \infty}(1)$ denotes a quantity that goes to zero as $n \to \infty$.  We will show that (possibly after passing to a subsequence) one has
$$ \|f\|_{U^3([N])} \leq 2^{-1/8}+o(1),$$
which will give the required contradiction.

If the $N=N_n$ stay bounded in $n$, then the condition \eqref{fap} and the Plancherel theorem imply that $f$ has an $L^2([N])$ norm of $o(1)$, in which case the claim follows; thus (after passing to a subsequence if necessary) we may assume that $N_n \to \infty$ as $n \to \infty$.

The next step is to apply the arithmetic regularity lemma from \cite{gt-reg}.  We state a form of this lemma suited for our needs:

\begin{lemma}[Arithmetic regularity lemma]\label{arl}  Let ${\mathcal F}: \R^+ \to \R^+$ be a nondecreasing function with ${\mathcal F}(M) \geq M$ for all $M$, let $\eps > 0$, let $N$ be an integer, and let $f \in L^\infty([N])$ with $\|f\|_{L^\infty([N])} \leq 1$.  Then there exists a quantity $M \leq C_{\eps,{\mathcal F}}$ and a decomposition
$$ f = f_{\nil} + f_{\sml} + f_{\unf}$$
into functions $f_{\nil}, f_{\sml}, f_{\unf} \in L^\infty([N])$ obeying the following properties:
\begin{itemize}
\item $f_\nil$ is a $({\mathcal F}(M),N)$-irrational virtual nilsequence of degree $\leq 2$, complexity $\leq M$, and scale $N$.  (We will define this term shortly.)
\item $\|f_\sml\|_{L^2([N])} \leq \eps$.
\item $\|f_\unf\|_{U^3([N])} \leq 1/{\mathcal F}(M)$.
\item $\|f_\nil\|_{L^\infty([N])} \leq 1$.
\end{itemize}
\end{lemma}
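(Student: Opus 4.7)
The plan is to adapt the arithmetic regularity lemma of Green and Tao \cite{gt-reg}, combining an energy-increment argument with the inverse theorem for the $U^3$ norm \cite{gt:inverse-u3} and a factorization theorem for polynomial nilsequences. In essence, this lemma has already been established in \cite{gt-reg}; our task is to state it in a form convenient for the subsequent application and to verify that the standard proof yields the parameters claimed.

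First, I would set up the energy-increment iteration. Initialise with $f_{\nil}^{(0)} := 0$, $f_{\sml}^{(0)} := 0$, $f_{\unf}^{(0)} := f$, complexity parameter $M_0 := 1$. At stage $i$, if $\|f_{\unf}^{(i)}\|_{U^3([N])} > 1/\mathcal{F}(M_i)$, apply the $U^3$ inverse theorem to produce a $2$-step nilsequence $\psi_i: [N] \to \bbC$ of bounded complexity (in terms of $\mathcal{F}(M_i)$) with $|\langle f_{\unf}^{(i)}, \psi_i\rangle_{L^2([N])}| \geq c(\mathcal{F}(M_i))$. Letting $\mathcal{B}_i$ be the $\sigma$-algebra generated by level sets of $\psi_0,\ldots,\psi_i$ at an appropriate resolution (plus the short arithmetic progressions needed to localise the nilsequence to $[N]$), set $f_{\nil}^{(i+1)} := \E(f\mid \mathcal{B}_i)$, let $f_{\sml}^{(i+1)}$ be a small $L^2$ truncation error, and let $f_{\unf}^{(i+1)}$ be the remainder. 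By the usual energy-increment argument, each step increases $\|f_{\nil}^{(i)}\|_{L^2([N])}^2$ by at least $c(\mathcal{F}(M_i))^2$; since this energy is bounded above by $1$, the process must terminate at some stage $i \leq I({\mathcal F},\eps)$.

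Second, I would upgrade the resulting nilsequence to a \emph{virtual} one and to make it irrational. Using the Mal'cev basis factorization of Green and Tao \cite{green-tao-linearprimes}, any polynomial sequence on a nilmanifold $G/\Gamma$ factors as $g(n) = \varepsilon(n) g'(n) \gamma(n)$, where $\varepsilon$ is smooth at scale $N$, $g'$ is $(\mathcal{F}(M),N)$-irrational (its projection to the horizontal torus has Diophantine properties with parameter $\mathcal{F}(M)$), and $\gamma$ is rational with period $q \leq M$. Absorbing $\gamma$ into a periodic factor (this is the ``virtual'' component, comprising the data of a progression partition of $[N]$ into arithmetic sub-progressions together with a Lipschitz nilsequence on each), and absorbing $\varepsilon$ into a bounded Lipschitz modification of the nilcharacter, produces a function $f_{\nil}$ of the required form. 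The final truncation $\|f_{\nil}\|_{L^\infty([N])} \leq 1$ is enforced by projecting onto the unit disc and moving the difference into $f_{\sml}$, using Pythagoras and the fact that $|f|\leq 1$.

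The main obstacle is parameter management: the factorization theorem replaces $G/\Gamma$ by a sub-nilmanifold with larger complexity, and subsequent iterations must be allowed to use this larger complexity as the input ``$M$''. This is handled by choosing the growth function of complexities along the iteration to dominate both $\mathcal{F}$ and the complexity blow-up from the factorization, so that the Green--Tao growth-function machinery closes the loop; this is precisely the reason the hypothesis of the lemma is phrased in terms of an arbitrary $\mathcal{F}$ rather than a fixed rate. A secondary technical point is that the inverse theorem returns correlation rather than a direct decomposition, so the projection step must be carried out with some care to ensure that $f_{\nil}^{(i+1)}$ is itself a Lipschitz-Lipschitz-nilsequence at the next scale, rather than a mere conditional expectation; this is done by approximating the indicator functions of level sets of $\psi_i$ by Lipschitz functions at scale $1/\mathcal{F}(M_{i+1})$ and placing the approximation error into $f_{\sml}^{(i+1)}$.
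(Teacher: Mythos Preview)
Your sketch is a reasonable outline of the energy-increment/inverse-theorem/factorisation machinery underlying the Green--Tao arithmetic regularity lemma, but it is far more than what the paper does here: the paper's proof is simply a citation to \cite[Theorem 1.2]{gt-reg}, together with the remark that the result there is stated for $[0,1]$-valued functions and that the extension to complex-valued functions bounded by $1$ is routine. In other words, the lemma is quoted as a black box rather than re-proved.

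So your approach is not wrong, but it is a reconstruction of the cited result rather than the paper's own argument. If you intend to present it this way, you should make clear that you are sketching the proof of \cite[Theorem 1.2]{gt-reg} itself; otherwise, a one-line citation (plus the observation about complex-valued $f$) is all that is required at this point in the paper.
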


\begin{proof}  See \cite[Theorem 1.2]{gt-reg}.  The theorem there is stated for functions taking values in $[0,1]$, but the extension to complex-valued functions bounded in magnitude by $1$ is routine.
\end{proof}

We now pause to recall some definitions from \cite{gt-reg} used in the above lemma; these definitions work in arbitrary degree, but we specialise to the degree $\leq 2$ case for simplicity.  We begin with the concept of a filtered degree $\leq 2$ nilmanifold, which is a slight variant of a $2$-step nilmanifold in which the group is required to be connected (and simply connected), but the commutator group $G_2$ can be replaced with a larger central subgroup of $G$.

\begin{definition}[Filtered nilmanifold]  A \emph{filtered degree $\leq 2$ nilmanifold} consists of the following data:
\begin{itemize}
\item A connected, simply connected $2$-step nilpotent Lie group $G = G_{(0)} = G_{(1)}$, together with a connected, simply connected central subgroup $G_{(2)}$ that contains the commutator subgroup $G_2 = [G,G]$;
\item A discrete cocompact subgroup $\Gamma$ of $G$ such that $\Gamma_{(2)} := \Gamma \cap G_{(2)}$ is cocompact in $G_{(2)}$;
\item A Mal'cev basis ${\mathcal X}$ for $G/\Gamma$ (see \cite{gt-reg} for a definition; we will not need to know the specific properties of such a basis here).
\end{itemize}
We say that a nilmanifold has \emph{complexity at most} $M$ for some $M \geq 2$ if $G$ has dimension at most $M$, and the rationality coefficients of the Mal'cev basis (see \cite[Definition 2.4]{green-tao-nilratner}) is bounded by $M$.  The Mal'cev basis endows $G/\Gamma$ with a metric, the exact definition of which we omit here.

A \emph{polynomial sequence} $g: \Z \to G$ (of degree $\leq 2$) is a sequence of the form $g(n) = g_0 g_1^n g_2^{\binom{n}{2}}$, where $g_0,g_1 \in G$ and $g_2 \in G_{(2)}$.
If $A, N \geq 1$, we say that $g$ is $(A,N)$-irrational if one has
$$ \| \xi_1(g_1) \|_{\R/\Z} \leq A/N$$
and
$$ \| \xi_2(g_2) \|_{\R/\Z} \leq A/N$$
whenever $\xi_1: G \to \R/\Z$ is a nontrivial continuous homomorphism annihilating $\Gamma$ of Lipschitz norm at most $A$ (using the metric on $G/\Gamma$), and $\xi_2: G_{(2)} \to \R/\Z$ is a nontrivial continuous homomorphism annihilating $\Gamma_{(2)}$ of Lipschitz norm at most $A$ (using the induced metric on $G_{(2)}/\Gamma_{(2)}$).  Here $\|x\|_{\R/\Z}$ denotes the distance from $x$ to the nearest integer.
\end{definition}

The precise definition of complexity is not particularly important for our purposes; the only property we need is that for any fixed $M$, there are only finitely many filtered degree $\leq 2$ nilmanifolds of complexity at most $M$, up to isomorphism.

\begin{definition}[Virtual nilsequences]  Let $A, M, N \geq 2$ be integers.  An \emph{$(A,N)$-irrational virtual nilsequence} of degree $\leq 2$, complexity $\leq M$, and scale $N$ is a function $f: \Z \to \C$ of the form
$$ f(n) := F( g(n) \Gamma, n \mod q, n/N )$$
where
\begin{itemize}
\item $G/\Gamma$ is a filtered degree $\leq 2$ nilmanifold of complexity at most $M$;
\item $g: \Z \to G$ is an $(A,N)$-irrational polynomial sequence;
\item $q$ is a positive integer with $q \leq M$; and
\item $F: G/\Gamma \times \Z/q\Z \times \R \to \C$ is a function of Lipschitz norm\footnote{To define this precisely, one needs to specify a metric on $G/\Gamma \times \Z/q\Z \times \R$; the exact choice of metric is not important for our arguments, though, and any reasonable construction will suffice here.} at most $M$.
\end{itemize}
\end{definition}

For further discussion of these concepts we refer the reader to \cite{gt-reg}.

We now apply Lemma \ref{arl} with $\eps = \eps_n := 1/n$ (say) and ${\mathcal F} = {\mathcal F}_n = n {\mathcal F}_0$ for some sufficiently rapid growth function ${\mathcal F}_0$ to be chosen later.  This gives us a quantity $M = M_n$ (which will likely grow quite rapidly in $n$) and a decomposition with the stated properties.  From \eqref{ukp} we have
$$ \|f-f_\nil\|_{U^3([N])} = o(1),$$
so it will suffice to show that
$$ \|f_\nil\|_{U^3([N])} \leq 2^{-1/8}+o(1).$$
From \eqref{pepn} and the Cauchy-Schwarz inequality one has
$$ \langle f_\sml, e(P) \rangle, \langle f_\unf, e(P) \rangle = o(1)$$
uniformly for all quadratic polynomials $P$, so by the triangle inequality and hypothesis we also have
$$ \langle f_\nil, e(P) \rangle = o(1)$$
uniformly for all quadratic polynomials.

By a diagonalisation argument (and choosing ${\mathcal F}_0$ sufficiently rapidly growing depending on $M$), it thus suffices to establish the following:

\begin{proposition}  Let $M \geq 2$ be a fixed integer (independent of $n$).  Suppose that $N = N_n, A = A_n$ are sequences going to infinity, and $f = f_n \in L^\infty([N])$ is a $(A,N)$-irrational virtual nilsequence of degree $\leq 2$, complexity $\leq M$, and scale $N$ such that
$$ \|f\|_{L^\infty([N])} \leq 1$$
and
\begin{equation}\label{fep}
\langle f, e(P) \rangle = o(1)
\end{equation}
uniformly for all quadratic polynomials $P$.  Then
$$ \|f\|_{U^3([N])} \leq 2^{-1/8}+o(1).$$
\end{proposition}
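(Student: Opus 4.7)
The plan is to use quantitative equidistribution of the irrational polynomial orbit together with the structural analysis from the proof of Theorem \ref{u3-system}, with the Euclidean scale coordinate $n/N$ ultimately supplying the decisive $2^{-1/8}$ factor via Theorem \ref{syg}.

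First I would pass to a subsequence. Since there are only finitely many isomorphism classes of filtered degree $\leq 2$ nilmanifolds of complexity at most $M$, and only finitely many integers $q \leq M$, and since the Lipschitz functions $F$ with Lipschitz norm $\leq M$ form a precompact family by Arz\`ela--Ascoli, we may assume by subsequencing that $G/\Gamma$, $q$, and the function $F$ are all fixed independently of $n$, while only the $(A_n, N_n)$-irrational polynomial sequence $g = g_n$ and the scale $N_n$ vary (with $A_n, N_n \to \infty$). Next I would invoke the quantitative Leibman equidistribution theorem of Green and Tao at the Host--Kra cubic level: since $g_n$ is $(A_n, N_n)$-irrational with $A_n \to \infty$, the cubic orbit $(g_n(n + \omega \cdot h)\Gamma)_{\omega \in \{0,1\}^3}$ equidistributes on $\HK^3(G)/\HK^3(\Gamma)$ uniformly in the parameters $(n, h)$, and the same holds (after passing to an appropriate $q$-refinement of the lattice) for the $n \bmod q$ and $n/N$ components. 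Combined with the corresponding limit of the normalization factor $\|1_{[N]}\|_{U^3(\Z/\tilde N\Z)}$, this converts $\|f\|_{U^3([N])}^8$ into an explicit cubic integral of $F$ over the Host--Kra cube of the limiting product system $\tilde X = G/\Gamma \times \Z/q\Z \times \R$ (restricted so that every cube corner lies in $[0,1]$ on the Euclidean factor), up to an $o(1)$ error.

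Now I would mirror the proof of Theorem \ref{u3-system} by Fourier decomposing $F = \sum_{\xi} F_\xi$ along the central torus $G_{(2)}/\Gamma_{(2)}$; as in that earlier argument, the cube integral is diagonal in this decomposition, so it suffices to bound each vertical Fourier component separately. For each non-trivial vertical frequency $\xi \neq 0$, the same chain of reductions (reduction to a connected elementary Heisenberg nilmanifold, horizontal Fourier expansion, and a Cauchy--Schwarz--Gowers step) reduces the bound to an application of Theorem \ref{syg} applied to the one-dimensional Euclidean scale coordinate alone, producing the factor $C_3 = 2^{-1/8}$. For the zero vertical frequency $\xi = 0$, the function $F_0$ descends to the Kronecker factor, and the corresponding piece $f_0(n)$ is, up to $o(1)$ in $L^\infty([N])$, a bounded superposition of terms of the form $e(P_j(n))\psi_j(n/N)$ with $P_j$ a quadratic polynomial and $\psi_j$ Lipschitz on $[0,1]$; approximating each $\psi_j$ uniformly by a polynomial in $n/N$ (hence a polynomial in $n$), the hypothesis \eqref{fep} forces the entire $\xi = 0$ contribution to $\|f\|_{U^3([N])}$ to be $o(1)$.

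\textbf{The main obstacle} is carrying out the equidistribution at the full Host--Kra cubic level for the product system $G/\Gamma \times \Z/q\Z \times [0,1]$ uniformly in the irrationality parameter, and then correctly propagating the hypothesis \eqref{fep} through the vertical Fourier decomposition. The zero-vertical-frequency piece, where $f$ would otherwise saturate the inequality as a pure quadratic phase weighted by a smooth bump in $n/N$, is precisely the near-extremizer that \eqref{fep} must rule out; once this case is dispatched, every surviving Fourier mode supplies the Euclidean factor $2^{-1/8}$ from Theorem \ref{syg}, and the claim follows.
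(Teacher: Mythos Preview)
Your overall architecture is close to the paper's, but there is a genuine conceptual error in where you locate the factor $2^{-1/8}$. You repeatedly say that the Euclidean scale coordinate $n/N$ is what ``ultimately supplies the decisive $2^{-1/8}$ factor via Theorem \ref{syg}'', and that after the Heisenberg reductions one applies Theorem \ref{syg} ``to the one-dimensional Euclidean scale coordinate alone''. This is not right, and the argument cannot work that way. The cube measure on the $[0,1]$ factor is the \emph{normalised} restriction of $\HK^3(\R)$ to $[0,1]^{\{0,1\}^3}$; for the constant function $F\equiv 1$ the cube integral equals $1$ and $\|F\|_{L^2([0,1])}=1$, so no gain whatsoever is available from the $n/N$ coordinate. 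In the paper the $[0,1]$ and $\Z/q\Z$ factors are removed by a Fubini-type step (mimicking Lemma \ref{Fub}), and the entire $2^{-1/8}$ comes from the nilmanifold $G/\Gamma$ itself: after passing to an elementary Heisenberg nilmanifold and performing the Zak-type horizontal Fourier expansion $F(x_1,x_2)=\sum_k F_k(x_1)e(k\cdot x_2)$, the cube integral unfolds into genuine $U^3(\R^d)$ norms of the functions $F_k$, to which Theorem \ref{syg} is applied. The relevant Euclidean variable is the Heisenberg coordinate $x_1\in\R^d$, not $n/N$.

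There is also a gap in your case split. You decompose along $G_{(2)}/\Gamma_{(2)}$ and then argue by the dichotomy $\xi=0$ versus $\xi\neq 0$. But the reduction to an elementary Heisenberg nilmanifold requires $\xi$ to be nontrivial on the \emph{commutator} torus $G_2/\Gamma_2$, not merely on $G_{(2)}/\Gamma_{(2)}$; when $\xi\neq 0$ but $\xi|_{G_2}=0$, quotienting by $\xi^\perp$ yields an abelian group and no Heisenberg structure at all. The paper handles this by first extracting from \eqref{fep} the cancellation $\int_{G_2/\Gamma_2}F(g_2x,y,z)\,d\mu_{G_2/\Gamma_2}(g_2)=0$, which immediately kills every mode with $\xi|_{G_2}=0$ (including $\xi=0$) before any Heisenberg analysis begins. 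Your treatment of the zero-frequency piece via approximating $\psi_j(n/N)$ by polynomials in $n$ is heading toward the same conclusion but is incomplete as stated (a polynomial in $n/N$ times $e(P(n))$ is not of the form $e(Q(n))$), and in any case it does not cover the intermediate modes $\xi|_{G_2}=0$, $\xi\neq 0$.
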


We now prove this proposition.  Fix $M$.  As there are only finitely many isomorphism classes of $G/\Gamma$ with a fixed complexity bound, we may thus (after passing to a subsequence) also fix the nilmanifold $G/\Gamma$.  For similar reasons, we may fix the period $q$.

Write
$$ f(n) := F( g(n) \Gamma, n \mod q, n/N ).$$
As $f$ has $L^\infty([N])$ norm at most $1$, we may clearly (after truncating $F$ if necessary) assume that $F$ is also bounded in magnitude by $1$.

The $U^3([N])$ norm of $f$ can now be computed asymptotically using the \emph{arithmetic counting lemma} from \cite[Theorem 1.11]{gt-reg}, which roughly speaking asserts that the triplet $( g(n) \Gamma, n \mod q, n/N )$ is uniformly distributed in $G/\Gamma \times \Z/q\Z \times [0,1]$ for the purposes of computing arithmetic averages such as the Gowers uniformity norms:

\begin{proposition}\label{propp}  One has
$$ \|f\|_{L^2([N])} = \| F \|_{L^2(G/\Gamma \times \Z/q\Z \times [0,1])} + o(1)$$
and
\begin{align*}
 \|f\|_{U^3([N])}^8 &= \int_{\HK^3([0,1]) \times \HK^3(\Z/q\Z) \times \HK^3(G,G_{(2)})/\HK^3(\Gamma,\Gamma_{(2)})} \\
 &\quad \prod_{\omega \in \{0,1\}^3} {\mathcal C}^{|\omega|} F( x_\omega, y_\omega, z_\omega )\ d\mu_{\HK^3(G,G_{(2)})/\HK^3(\Gamma,\Gamma_{(2)})}(x)\\ &\quad d\mu_{\HK^3(\Z/q\Z)}(y) d\mu_{\HK^3([0,1])}(z) + o(1)
 \end{align*}
where $x = (x_\omega)_{\omega \in \{0,1\}^3}$ and similarly for $y,z$, $\HK^3([0,1])$ is the restriction of $\HK^3(\R)$ to $[0,1]^{\{0,1\}^3}$ with the normalised Lebesgue measure $\mu_{\HK^3([0,1])}$, $\HK^3(G,G_{(2)})$ is defined similarly as to $\HK^3(G)$ but with $G_{(2)}$ taking the place of the commutator group $G_2$, and similarly for $\HK^3(\Gamma,\Gamma_{(2)})$.  ($\Z/q\Z$ will be endowed here with normalised counting measure.)
\end{proposition}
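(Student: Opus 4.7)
The proposition is an instance of the arithmetic counting lemma which is the companion of the arithmetic regularity lemma (Lemma \ref{arl}); specifically, we will invoke \cite[Theorem 1.11]{gt-reg}, whose conclusion is precisely of the shape required here. The only work is to unpack the two norms on the left so that the counting lemma applies directly.

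For the $L^2$ claim, expand
\begin{equation*}
 \|f\|_{L^2([N])}^2 = \E_{n \in [N]} |F(g(n)\Gamma,\, n \bmod q,\, n/N)|^2.
\end{equation*}
The map $n \mapsto (g(n)\Gamma,\, n\bmod q,\, n/N)$ is a polynomial sequence (of degree $\leq 2$ in the $G/\Gamma$ component, linear in the remaining two) into the product space $G/\Gamma \times \Z/q\Z \times [0,1]$. The $(A,N)$-irrationality hypothesis on $g$, combined with $A = A_n \to \infty$, allows the Green-Tao quantitative Leibman theorem \cite{green-tao-nilratner} (the ``$k=1$ cube'' case of the counting lemma) to be applied; the three factor sequences have no common resonance, so the joint orbit equidistributes to Haar measure on the product, giving $\|f\|_{L^2([N])}^2 = \|F\|_{L^2(G/\Gamma \times \Z/q\Z \times [0,1])}^2 + o(1)$.

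For the $U^3$ claim, we expand the $U^3([N])$ norm using the definition \eqref{util} and an embedding $[N] \hookrightarrow \Z/\tilde N\Z$ with $\tilde N = 2^3 N + 1$; after dividing by the known value of $\|1_{[N]}\|_{U^3(\Z/\tilde N\Z)}^8$, we obtain
\begin{equation*}
 \|f\|_{U^3([N])}^8 = \frac{1}{Z_N}\sum_{\substack{n, h_1, h_2, h_3 \in \Z\\ n + \omega\cdot h \in [N]\ \forall \omega}} \prod_{\omega \in \{0,1\}^3} \mathcal{C}^{|\omega|} f(n + \omega\cdot h)
\end{equation*}
for an explicit normalizing constant $Z_N$. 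Substituting $f(m) = F(g(m)\Gamma, m\bmod q, m/N)$, we recognise the tuple $\bigl((g(n + \omega\cdot h)\Gamma)_\omega,\,((n+\omega\cdot h)\bmod q)_\omega,\,((n+\omega\cdot h)/N)_\omega\bigr)$ as the image of a single $\Z^4$-polynomial map $\tilde g$ landing in $\HK^3(G,G_{(2)})/\HK^3(\Gamma,\Gamma_{(2)}) \times \HK^3(\Z/q\Z) \times \HK^3(\R)$. That the image actually lies in the Host-Kra group in the first factor follows from the standard fact that the Taylor coefficients of a degree $\leq 2$ polynomial sequence on $G$ with quadratic coefficient in $G_{(2)}$ produce a parallelepiped in $\HK^3(G, G_{(2)})$; the condition $n+\omega\cdot h \in [N]$ in the sum translates to restricting the $\R$-factor to the cube $[0,1]^{\{0,1\}^3} \cap \HK^3(\R) = \HK^3([0,1])$.

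The counting lemma of \cite[Theorem 1.11]{gt-reg} now asserts that for $(A,N)$-irrational $g$ with $A \to \infty$, the sequence $\tilde g$ equidistributes with respect to the Haar measure on the product $\HK^3(G,G_{(2)})/\HK^3(\Gamma,\Gamma_{(2)}) \times \HK^3(\Z/q\Z) \times \HK^3([0,1])$, with quantitative error $o(1)$ that may be made uniform once the Lipschitz constant $M$ of $F$ is fixed. Applying this to the Lipschitz function $\prod_\omega \mathcal{C}^{|\omega|} F$ on the product Host-Kra space converts the arithmetic average into the claimed integral, completing the proof.

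The main conceptual obstacle is the second step: establishing the joint equidistribution of $\tilde g$ on the Host-Kra nilmanifold. This is precisely the content of the counting lemma of \cite{gt-reg} and we simply quote it; the routine (if slightly bookkeeping-heavy) part is checking that the polynomial sequence constructed from the three factors of the virtual nilsequence really does satisfy the hypotheses of that lemma (irrationality, boundedness of Lipschitz norm, complexity bound) uniformly in $n$, which is immediate from the complexity $\leq M$ and $(A,N)$-irrationality assumptions built into the definition of a virtual nilsequence.
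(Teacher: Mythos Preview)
Your proposal is correct in substance and invokes exactly the tool the paper uses, namely the arithmetic counting lemma \cite[Theorem 1.11]{gt-reg}. The only difference is in how the two auxiliary factors $\Z/q\Z$ and $[0,1]$ are handled. You treat the triple product $G/\Gamma \times \Z/q\Z \times [0,1]$ (and its Host--Kra cube) as a single object to which the counting lemma applies in one shot. The paper instead first applies \cite[Theorem 1.11]{gt-reg} only in the case where $F(x,y,z)$ is independent of $y$ and $z$ (so that the statement reduces to pure nilmanifold equidistribution), and then bootstraps: the $z$-dependence is recovered by approximating $F$ by a function piecewise constant in $z$ and passing to a Riemann sum, and the $y$-dependence by decomposing the average into residue classes modulo $q$.

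The paper's route is more modular and does not rely on the counting lemma being stated for the full virtual-nilsequence product; your route is cleaner but tacitly assumes that the counting lemma in \cite{gt-reg} is literally formulated for averages of virtual nilsequences with the $\Z/q\Z$ and $n/N$ factors already built in (or that these factors can be absorbed into an enlarged filtered nilmanifold, which requires care for the non-compact $[0,1]$ piece). Either way the content is the same; if you want to match the paper's level of self-containedness you should add a sentence indicating how the $y$ and $z$ factors are disposed of once the nilmanifold equidistribution is in hand.
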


\begin{proof} (Sketch) If $F(x,y,z)$ is independent of the $y,z$ coordinates this follows directly from \cite[Theorem 1.11]{gt-reg} and a routine calculation.  The dependence on $z$ then be inserted by approximating $F$ by a piecewise constant function in $z$, applying \cite[Theorem 1.11]{gt-reg} to each piece, and summing to obtain a Riemann sum that then converges to the required integral.  (Note that $F$ is Lipschitz continuous and thus Riemann integrable.) The dependence on $y$ can be inserted by similarly decomposing the left-hand side into summations over residue classes modulo $q$ and applying the preceding type of computations to each such residue class.
\end{proof}

In view of this proposition, it suffices to establish the estimate
\begin{equation}\label{factor}
\begin{split}
&\int_{\HK^3([0,1]) \times \HK^3(\Z/q\Z) \times \HK^3(G,G_{(2)})/\HK^3(\Gamma,\Gamma_{(2)})} \\
&\quad \prod_{\omega \in \{0,1\}^3} {\mathcal C}^{|\omega|} F( x_\omega, y_\omega, z_\omega )\\
&\quad \ d\mu_{\HK^3(G,G_{(2)})/\HK^3(\Gamma,\Gamma_{(2)})}(x) d\mu_{\HK^3(\Z/q\Z)}(y) d\mu_{\HK^3([0,1])}(z) \\
&\quad \quad \leq 2^{-1} \|F\|_{L^2(G/\Gamma \times \Z/q\Z \times [0,1])} + o(1).
\end{split}
\end{equation}
To show this, we must first convert the information \eqref{fep} into a cancellation property of $F$:

\begin{proposition}[Cancellation property]  Let $G_2 := [G,G]$ and $\Gamma_2 := G_2 \cap \Gamma$, then $G_2/\Gamma_2$ is a torus that acts on $G/\Gamma$, and we have
\begin{equation}\label{gg2}
 \int_{G_2/\Gamma_2} F( g_2 x,y,z )\ d\mu_{G_2/\Gamma_2}(g_2) = o(1)
\end{equation}
uniformly for all $(x,y,z) \in G/\Gamma \times \Z/q\Z \times [0,1]$.
\end{proposition}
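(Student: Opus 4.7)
The plan is to argue by contradiction. Suppose \eqref{gg2} fails; then, passing to a subsequence in $n$, there exist $\eta>0$ and points $(x_n^*,y_n^*,z_n^*)$ with $|\tilde F(x_n^*,y_n^*,z_n^*)|\geq\eta$, where
$$\tilde F(x,y,z) := \int_{G_2/\Gamma_2} F(g_2 x,y,z)\, d\mu_{G_2/\Gamma_2}(g_2).$$
The function $\tilde F$ is $G_2$-left-invariant and $\Gamma$-right-invariant, so it descends to the torus $T\times \Z/q\Z\times[0,1]$, where $T := G/(G_2\Gamma)$ (a genuine torus, since $G/G_2$ is a simply connected abelian Lie group and $\Gamma G_2/G_2$ is a cocompact lattice in it). Since $F$ has Lipschitz norm $\leq M$, so does $\tilde F$, and the pointwise lower bound upgrades to an $L^2$ lower bound $\|\tilde F\|_{L^2(T\times\Z/q\Z\times[0,1])}^2 \geq c(\eta,M)>0$.

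Define $\tilde f(n) := \tilde F(g(n)\Gamma,\,n\bmod q,\,n/N)$. Applying a correlation version of the counting lemma (obtained by applying \cite[Theorem 1.11]{gt-reg} to the Lipschitz function $F\overline{\tilde F}$ exactly as in Proposition \ref{propp}) and using Fubini to replace $\int_{G/\Gamma}F\overline{\tilde F}$ by $\int_{T}|\tilde F|^2$, I obtain
$$\langle f,\tilde f\rangle_{L^2([N])} = \int |\tilde F|^2\, d\mu_{T}\,dy\,dz + o(1) \geq c(\eta,M) - o(1).$$
The goal is to show that this inner product is in fact $o(1)$, contradicting the lower bound.

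To do so, Fourier-expand the Lipschitz function $\tilde F$ on the compact abelian group $T\times \Z/q\Z$. Approximating uniformly (since a Lipschitz function on a compact Lie group is uniformly approximable by a trigonometric polynomial) yields, for any prescribed $\eta'>0$, a finite expansion
$$\tilde F(u,y,z) = \sum_{(\xi,k)\in S} H_{\xi,k}(z)\,\chi_\xi(u)\,e(ky/q) + O(\eta'),$$
with continuous $H_{\xi,k}:[0,1]\to\C$. Because $g(n) = g_0 g_1^n g_2^{\binom{n}{2}}$ projects in the abelian quotient $G/G_2$ to a polynomial of degree $\leq 2$ in $n$, each factor $\chi_\xi(g(n)\Gamma G_2)\,e(k(n\bmod q)/q)$ equals $e(P_{\xi,k}(n))$ for a polynomial $P_{\xi,k}:\Z\to\R/\Z$ of degree $\leq 2$. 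Finally, approximating each $H_{\xi,k}(n/N)$ uniformly on $[0,1]$ by a Fejer partial sum $\sum_{|m|\leq M_0} c_{\xi,k,m}\,e(mn/N)$ (with error $\leq \eta'$) and absorbing $e(mn/N)$ into the quadratic phase, each term of the expansion, evaluated at $n$, becomes a finite linear combination of $e(Q(n))$ for quadratic polynomials $Q:\Z\to\R/\Z$, uniformly up to error $O(\eta')$.

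The hypothesis \eqref{fep} provides $\langle f, e(Q)\rangle_{L^2([N])} = o(1)$ for each such $Q$; summing the finitely many terms of the approximation and then letting $\eta'\to 0$ after $n\to\infty$ (a standard diagonalization) gives $\langle f,\tilde f\rangle_{L^2([N])} = o(1)$, contradicting the earlier lower bound. The main technical obstacle is the correlation version of Proposition \ref{propp} used in step two; the rest is bookkeeping, using that on the abelian quotient $G/G_2$ the polynomial sequence $g(n)$ becomes a genuine degree-$\leq 2$ polynomial, so that characters of $T$ combined with $\Z/q\Z$-characters and Fejer frequencies in $n/N$ all collapse to quadratic exponentials on $\Z$.
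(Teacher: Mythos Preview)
Your argument is correct and follows essentially the same approach as the paper's own proof: define $\tilde F$ by averaging over $G_2/\Gamma_2$, use the counting lemma to convert $\|\tilde F\|_{L^2}^2=\langle F,\tilde F\rangle$ into $\langle f,\tilde f\rangle_{L^2([N])}$, and then approximate $\tilde f$ by finite combinations of quadratic phases $e(P)$ so that \eqref{fep} forces this inner product to be $o(1)$. The only cosmetic differences are that you frame the argument by contradiction and organise the approximation via Fourier expansion on $T\times\Z/q\Z$ followed by Fej\'er approximation in $z$, whereas the paper proceeds directly and invokes Weierstrass approximation after quotienting by $G_2$; neither difference is substantive.
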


\begin{proof}  We will use an argument from \cite[\S 7]{gt-reg}.  Write the left-hand side of \eqref{gg2} as $\tilde F(x,y,z)$, then $\tilde F$ is also Lipschitz continuous (uniformly in $n$) and is also $G_2/\Gamma_2$-invariant.  It will thus suffice to show that the quantity
$$ \| \tilde F \|_{L^2(G/\Gamma \times \Z/q\Z \times [0,1])}^2 = \langle F, \tilde F \rangle_{L^2(G/\Gamma)}$$
is $o(1)$.  Applying (a depolarised variant of) Proposition \ref{propp}, we have
$$ \langle f, \tilde f \rangle_{L^2([N])} = \langle F, \tilde F \rangle_{L^2(G/\Gamma \times \Z/q\Z \times [0,1])} + o(1)$$
and so it suffices to show that
$$ \langle f, \tilde f \rangle_{L^2([N])} = o(1).$$
In view of \eqref{fep} (and the uniform bound on $f$), it suffices to show that for any $\eps > 0$, $\tilde f$ can be approximated uniformly to error $\eps$ by a finite linear combination of quadratic polynomials $e(P)$, where the size and number of coefficients is bounded uniformly in $n$ for fixed $\eps$.

The function $\tilde F$ is $G_2/\Gamma_2$-invariant, and so one can quotient out by this group and reduce to the case when $G_2$ is trivial, i.e. $G$ is abelian.  In this case, $G/\Gamma$ is isomorphic to a torus $(\R/\Z)^m$, and $P: \Z \to \R^m$ is a quadratic polynomial, thus
$$ \tilde f = \tilde F( P(n) \mod \Z^m, n \mod q, n/N ).$$
The claim then follows easily from the Weierstrass approximation theorem.
\end{proof}

By modifying $F$ (and thus $f$) uniformly by $o(1)$, we may now assume that
\begin{equation}\label{cancel}
  \int_{G_2/\Gamma_2} F( g_2 x,y,z )\ d\mu_{G_2/\Gamma_2}(g_2) = 0
\end{equation}
for all $(x,y,z) \in G/\Gamma \times \Z/q\Z \times [0,1]$.  It will then suffice to show the inequality \eqref{factor} (with no $o(1)$ error term) whenever $F$ is a bounded measurable function obeying \eqref{cancel}.

By mimicking the proof of Lemma \ref{Fub}, we see that it then suffices to establish the inequality
$$
\int_{\HK^3(G,G_{(2)})/\HK^3(\Gamma,\Gamma_{(2)})} \prod_{\omega \in \{0,1\}^3} {\mathcal C}^{|\omega|} F( x_\omega )\ d\mu_{\HK^3(G,G_{(2)})/\HK^3(\Gamma,\Gamma_{(2)})}(x) \leq 2^{-1} \|F\|_{L^2(G/\Gamma)}$$
whenever $F$ obeys the cancellation property
\begin{equation}\label{cancel2}
  \int_{G_2/\Gamma_2} F( g_2 x)\ d\mu_{G_2/\Gamma_2}(g_2) = 0
\end{equation}
for all $x \in G/\Gamma$.

Using Fourier decomposition on the torus $G_{(2)}/\Gamma_{(2)}$ (which acts on $G/\Gamma$) as in the previous section, we may assume without loss of generality that there is a character $\xi: G_{(2)}/\Gamma_{(2)} \to \R/\Z$ such that
$$ F(g_2 x) = e(\xi(g_2)) F(x)$$
for all $x \in G/\Gamma$ and $g_2 \in G_{(2)}/\Gamma_{(2)}$.  If $\xi$ annihilates $G_2/\Gamma_2$, then from \eqref{cancel} we see that $F$ is trivial, and the claim follows in this case; so we may assume that $\xi$ is non-trivial on $G_2/\Gamma_2$.

As in the previous section, $F$ is now invariant with respect to the orthogonal complement $\xi^\perp := \{ g_2 \in G_{(2)}/\Gamma_{(2)}: \xi(g_2) = 0 \}$ of $\xi$.  We can then quotient out by that complement and reduce to the case when $G_{(2)}/\Gamma_{(2)} = G_2/\Gamma_2$ is the unit circle $\R/\Z$.  At this point, the nilmanifold $G/\Gamma$ becomes a connected elementary nilmanifold in the language of \cite{host-kra-inf} (and $\HK^3(G,G_{(2)})/\HK^3(\Gamma,\Gamma_{(2)})$ simplifies to $\HK^3(G)/\HK^3(\Gamma)$), and the claim now follows from the results of the previous section.  

\begin{remark} An inspection of the above argument reveals that the condition $\|f\|_{U^3([N])} \geq 2^{-1/8}+\eta$ in Theorem \ref{u3-n} can in fact be relaxed to $\|f\|_{U^3([N])} \geq 2^{-1/8} \| f\|_{L^2([N])}+\eta$.
\end{remark}

\section{Threshold for $U^3$ on cyclic groups}\label{u3-cyclic-sec}

We now use Theorem \ref{u3-n} to prove Theorem \ref{u3-cyclic}.  Fix $f, N, \eta$ as in that theorem.

For each integer $M \geq 1$, let $f^{(M)} = f^{(M)}_n: [MN] \to \C$ be the ``unwrapped'' version of $f$ defined by
$$ f^{(M)}(n) := f(n \mod N)$$
for $n \in [NM]$.  A simple calculation shows that
$$ \|f^{(M)}\|_{U^3([NM])}^8 = \| f \|_{U^3(\Z/N\Z)}^8 + O(1/M).$$
In particular, one has
$$ \|f^{(M)}\|_{U^3([NM])} \geq 2^{-1/8} + \eta/2$$
for all sufficiently large $M$.  Applying Theorem \ref{u3-n}, we conclude that for all sufficiently large $M$, there exists a polynomial $P_M(n) = \alpha_M n^2 + \beta_M n + \gamma_M$ with $\alpha_M,\beta_M,\gamma_M \in \R$ such that
$$ |\langle f^{(M)}, e(P_M) \rangle_{L^2([NM])}| \geq c(\eta)$$
where $c(\eta) > 0$ is independent of $N$ and $M$.  We may normalise so that $\gamma_M = 0$ and $\alpha_M, \beta_M \in [0,1]$, thus
\begin{equation}\label{enm}
 |\E_{n \in [NM]} f(n \mod N) e( - \alpha_M n^2 - \beta_M n )| \geq c(\eta).
\end{equation}
The basic problem here is that $\alpha_M, \beta_M$ are not \emph{a priori} known to be integer multiples of $1/N$, so that $n \mapsto \alpha_M n^2 + \beta_M n$ does not descend to a polynomial in $\Z/N\Z$.  To resolve this, we use the Weyl theory for exponential sums.  Expressing an element $n=[NM]$ as $n=mN+a$ with $m \in [M]-1$ and $a \in [N]$, and using the triangle inequality, one obtains
$$ \E_{a \in [N]} |\E_{m \in [M]-1} e( \alpha_M (mN+a)^2 - \beta_M (mN+a) )| \geq c(\eta)$$
and thus by Cauchy-Schwarz
$$ \E_{a \in [N]} |\E_{m \in [M]-1} e( \alpha_M (mN+a)^2 - \beta_M (mN+a) )|^2 \geq c(\eta)^2.$$
The left-hand side can be expanded as
$$ \E_{m,m' \in [M]-1} \E_{a \in [N]} e( \alpha_M (mN+m'N+2a)(m-m')N - \beta_M (m-m')N ).$$
By the triangle inequality, we thus have
$$ \E_{m,m' \in [M]-1} |\E_{a \in [N]} e( \alpha_M (2a)(m-m')N )| \geq c(\eta)^2.$$
Thus, for at least $c(\eta)^2/2 M^2$ values of $m,m' \in [M]-1$, one has
$$ |\E_{a \in [N]} e( \alpha_M (2a)(m-m')N )| \geq c(\eta)^2/2,$$
which by the geometric series formula implies that
$$ \| \alpha_M N (2a) (m-m') \|_{\R/\Z} \leq C(\eta) / N$$
for such $m,m'$, and some constant $C(\eta) > 0$ depending only on $\eta$.  Applying a lemma of Vinogradov (see \cite[Lemma 3.2]{green-tao-nilratner}), we conclude that for each $M$ and $a \in [N]$, there exists a rational $b_{a,M}/q_{a,M}$ with $|b_{a,M}|,|q_{a,M}| \leq C'(\eta)$ such that
$$ \| 2a \alpha_M N - \frac{b_{a,M}}{q_{a,M}} \|_{\R/\Z} \leq C'(\eta)/NM$$
where $C'(\eta)$ depends only on $\eta$.  By pigeonholing in the $b_{a,M}, q_{a,M}$ and applying the Vinogradov lemma (\cite[Lemma 3.2]{green-tao-nilratner}) again, we can find rationals $b_M/q_M$ with $|b_M|, |q_M| \leq C''(\eta)$ such that
$$ \| \alpha_M N - \frac{b_{M}}{q_{M}} \|_{\R/\Z} \leq C''(\eta)/N^2 M$$
for some $C''(\eta)$ depending only on $\eta$.  By pigeonholing, we may find a rational $b/q$ with $|q| \leq C''(\eta)$ such that
$$ \left|\alpha_M - \frac{b}{cN} \right| \leq \frac{C''(\eta)}{N^2 M}$$
for infinitely many $M$.  We may normalise $q$ to be positive.

By \eqref{enm} pigeonholing, for each such (sufficiently large) $M$ there exists an interval $[m_M N, (m_M+q)N-1]$ such that
$$ \E_{n \in [m_M N, (m_M+q)N - 1]} f(n \mod N) e( - \alpha_M n^2 - \beta_M n )| \geq c(\eta)$$
and thus by translation
$$ |\E_{n \in [qN]} f(n \mod N) e( - \alpha_M n^2 -\beta'_M n)| \geq c(\eta)$$
for some real $\beta'_M$, which we can normalise to lie between $0$ and $1$.  By passing to a subsequence we may assume that $\beta'_M$ converges to a limit $\beta'$, and we conclude that
$$ |\E_{n \in [qN]} f(n \mod N) e( - \frac{b}{qN} n^2 -\beta' n)| \geq c(\eta).$$
We can write $\beta' = \frac{c}{qN} + \frac{\theta}{qN}$ for some integer $c$ and $|\theta| \leq 1$, and so
$$ |\E_{n \in [qN]} f(n \mod N) e( - P(n) ) e( - \theta n / q N )| \geq c(\eta)$$
where $P: \Z \to \R/\Z$ is the polynomial $P(n) := bn^2/qN + cn/qN \mod 1$, which is periodic with period $qN$.

By Urysohn's lemma followed by the Weierstrass approximation theorem (and using the Arzel\'a-Ascoli theorem to get uniform bounds), given any $\eps > 0$ we can approximate the function $x \mapsto e(-\theta x)$ on $[0,1]$ to within $\eps$ in $L^1([0,1])$ norm by a linear combination of exponentials of the form $x \mapsto e(kx)$ for integer $k$, with the size and number of such coefficients bounded uniformly in $\theta$.  Applying this with $\eps = c(\eta)/2$ and using the pigeonhole principle, we see that
$$ |\E_{n \in [qN]} f(n \mod N) e( - P(n) ) e( - k n / q N )| \geq c'(\eta)$$
for some integer $k$ and some $c'(\eta) > 0$ depending only on $\eta$, and the claim follows.


\end{document}